\documentclass{amsart}
\usepackage{amsmath,amsfonts,euscript,amscd,amsthm,amssymb,upref,graphics}
\usepackage{mathrsfs, amsmath}

\newcommand{\Escr}{\mathscr{E}}
\newcommand{\Kscr}{\mathscr{K}}
\usepackage[all]{xy}

\usepackage{ulem}

\setlength{\oddsidemargin}{0.25in}

\setlength{\textwidth}{6in}

\setlength{\topmargin}{-0.25in}

\setlength{\headheight}{0.3in}

\setlength{\headsep}{0.2in}

\setlength{\textheight}{8.5in}

\setlength{\footskip}{0.5in}

\usepackage{latexsym}
\usepackage{amsfonts,amssymb} 
\usepackage{graphicx}
\usepackage{tikz-cd}
\usepackage{caption}
\usepackage{subcaption}
\usepackage{enumitem}

\newcommand{\field}[1]{\mathbb{#1}}
\newcommand{\A}{\field{A}}
\newcommand{\C}{\field{C}}

\newcommand{\G}{\field{G}}

\newcommand{\N}{\field{N}}
\newcommand{\PP}{\field{P}}
\newcommand{\Q}{\field{Q}}

\newcommand{\Z}{\field{Z}}

\newcommand{\krn}{{\rm ker}\,}

\newcommand{\ggoth}{{\ensuremath{\mathfrak{g}}}}
\newcommand{\Ggoth}{{\ensuremath{\mathfrak{G}}}}
\newcommand{\pgoth}{{\ensuremath{\mathfrak{p}}}}
\newcommand{\Pgoth}{{\ensuremath{\mathfrak{P}}}}

\newcommand{\lnd}{{\rm LND}}
\newcommand{\Proj}{{\rm Proj\,}}
\newcommand{\Spec}{{\rm Spec\,}}
\renewcommand{\emptyset}{\varnothing}
\newcommand{\ACF}{\text{\rm ACF}_0}
\newcommand{\trdeg}{\text{\rm trdeg}}
\newcommand{\bE}{\mathbf{E}}
\newcommand{\Scho}[1]{\text{\rm{\bf Sch}/{$#1$}}}

\theoremstyle{plain}

\newtheorem{theorem}{Theorem}[section]
\newtheorem{proposition}[theorem]{Proposition}
\newtheorem{lemma}[theorem]{Lemma}
\newtheorem{corollary}[theorem]{Corollary}

\theoremstyle{definition}

\newtheorem{definition}[theorem]{Definition}
\newtheorem{remark}[theorem]{Remark}

\newtheorem{setup}[theorem]{Setup}
\newtheorem{noname}[theorem]{}

\DeclareMathOperator{\SL}{SL}
\DeclareMathOperator{\GL}{GL}
\newcommand{\Aut}{\text{\rm Aut}}
\newcommand{\Frac}{\text{\rm Frac}}
\newcommand{\rank}{\text{\rm rank}}
\newcommand{\setspec}[2]{\big\{\,#1\, \mid \,#2\, \big\}}
\newcommand{\CC}{ \mathcal{C} }

\setlength{\marginparwidth}{25mm}

\theoremstyle{definition}

\theoremstyle{remark}

\begin{document}

\makeatletter	   
\makeatother     

\title[the ring of invariants of the binary quintic representation of $\SL_2$]{automorphisms of the ring of invariants \\ of the binary quintic representation of $\SL_2$}
\author{D. Daigle and G. Freudenburg}
\date{\today}
\subjclass[2010]{13A50, 13N15, 14L35, 14R20}
\keywords{$\SL_2$-action, $k$-forms, binary quintic, locally nilpotent derivation, invariant theory} 
\maketitle

\begin{abstract}
Let $k^{[6]}$ denote a polynomial ring in $6$ variables over an algebraically closed field $k$ of characteristic zero
and consider the action of $\SL_2(k)$ on $k^{[6]}$ induced by the irreducible representation of $\SL_2$ of degree $5$ (the binary quintic representation).
We consider the ring $Q = (k^{[6]})^{\SL_2}$ of invariant polynomials and show that $\Aut_k(Q) = k^*$, where $\Aut_k(Q)$ is the group of $k$-algebra automorphisms of $Q$.
Based on this result, we show that the group of $\SL_2$-equivariant polynomial automorphisms of $k^{[6]}$ is isomorphic to $k^*$.
\end{abstract}


\section{introduction}
Let $k$ be an algebraically closed field of characteristic 0.
For each integer $n\ge 1$, let $V_n \subset k[x,y]$ be the vector space (of dimension $n+1$) of homogeneous polynomials of degree $n$
and consider the left-action of $\SL_2(k)$ on $V_n$ given by the following rule:
$$
\text{if $g = \left( \begin{smallmatrix} a & b \\ c & d \end{smallmatrix}\right) \in \SL_2(k)$ and $F(x,y) \in V_n$
then $g * F(x,y) = F(dx-by,-cx+ay)$.}
$$
Define the group homomorphism  $\rho_n : \SL_2(k) \to \GL(V_n)$ by declaring that if $g \in \SL_2(k)$ then $\rho_n(g)$ is the linear map $F(x,y) \mapsto g * F(x,y)$.
Then $\rho_n$ is an irreducible representation of $\SL_2(k)$ on $V_n$, called the irreducible representation of degree $n$.
These are the only irreducible representations of $\SL_2(k)$.  
It is customary to refer to $\rho_5$ as the binary quintic representation of $\SL_2(k)$. 

We may view $V_n$ as an algebraic variety (isomorphic to the affine space $\A^{n+1}_k$),
and the above action as a left-action of $\SL_2$ on the variety $V_n$.
This gives rise to a right-action  of $\SL_2$ on the coordinate algebra\footnote{Given a ring $B$ and $n\in\N$, $B^{[n]}$ denotes the polynomial ring in $n$ variables over $B$.}
$k[V_n] \cong k^{[n+1]}$,
so we may consider the ring of invariants $Q_n = k[V_n]^{ \SL_2 }$.
We may therefore consider the group $\Aut_k(Q_n)$ of $k$-algebra automorphisms of $Q_n$, 
and the group $\Aut_{\SL_2}( V_n )$ of $\SL_2$-equivariant polynomial automorphisms of $V_n \cong \A^{n+1}_k$.

The groups $\Aut_{\SL_2}( V_n )$ were first studied by Kurth in \cite{Kurth.97}, who showed that, when $k=\C$, $\Aut_{\SL_2}( V_n ) \cong \C^*$ for $1\le n\le 4$. 
Note that $Q_1\cong k$, $Q_2\cong Q_3\cong k^{[1]}$ and $Q_4\cong k^{[2]}$, and their automorphism groups are known.
In this paper, we show that $\Aut_k(Q_5) \cong k^*$, and we extend Kurth's result by showing that $\Aut_{\SL_2}( V_5 ) \cong k^*$.

Let $Q = Q_5$.  It is known that $Q$ is a rational unique factorization domain (UFD) of dimension three,
and that $Q$ is generated as a $k$-algebra by elements $x,y,z,w$ where $R:=k[x,y,z]\cong_kk^{[3]}$ and $w^2\in R$. 
The particular relation depends on the choice of generators. For example, Dixmier \cite{Dixmier.90} gives $16w^2=F$ where:
\begin{equation}\label{Cayley}
F=x^2z^3-2xy^2z^2+y^4z-72x^2yz+8xy^3-432x^3 .
 \end{equation}
Dixmier attributes this description of $Q$ to Hermite (1854). In \cite{Salmon.1885}, Article 230, Salmon gives another proof of Hermite's result. 
Replacing $w$ by $4w$ gives the simpler relation $w^2=F$, so
$$
Q\cong_kR[W]/(W^2-F)
$$
where $R[W]=R^{[1]}$ and $F$ is as in \eqref{Cayley}.
 
We obtain ${\rm Aut}_k(Q)\cong k^*$ as a special case of {\it Theorem\,\ref{main3}}. 
Our main results are Theorems \ref{main1}, \ref{main2} and \ref{main3},
and the results about $\Aut_k(Q)$ and $\Aut_{\SL_2}( V_5 )$ are Corollaries \ref{cor1} and \ref{main4}.
These five results are valid over any algebraically closed field $k$ of characteristic zero.

\medskip

Let $k$ be an algebraically closed field of characteristic zero.
Let $R=\bigoplus_{d\in\N}R_d$ be the $\N$-grading of $R=k[x,y,z]\cong k^{[3]}$ where $x\in R_3$, $y\in R_2$ and $z\in R_1$. 
Consider the following conditions on an element $f$ of $R_9$:
\begin{itemize}
\item [{\rm (C1)}] $f$ is irreducible.
\item [{\rm (C2)}] The curve $\Proj (R/fR)$ has at least two singular points.
\item [{\rm (C3)}] $f(x,y,0)=x(px^2+qy^3)$ for $p,q\in k^*$. 
\item [{\rm (C4)}] $f_z(x,y,0)=y(rx^2+sy^3)$ for $r,s\in k$ where $\det\left(\begin{smallmatrix} 3p & r\cr q & s\end{smallmatrix}\right)\ne 0$.
\end{itemize}

\begin{theorem}\label{main1}
If $f\in R_9$ satisfies conditions {\rm (C1)-(C3)} then $|f|_R\ge 2$.
\end{theorem}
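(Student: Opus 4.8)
The plan is to identify $|f|_R$ with $|\operatorname{Sing}(C)|$, where $C := \Proj(R/fR)$, and then to read the bound off from (C2). Here is why I expect that identification: since $f$ is weighted-homogeneous of positive degree, the Euler relation gives $f \in (f_x,f_y,f_z)$, so the singular locus of the threefold $\Spec(R[W]/(W^2-f))$ — of which $Q$ is the instance $f=F$ — is exactly $\{W=0\}\times V(f_x,f_y,f_z)$, a cone whose one-dimensional irreducible components are the $k^*$-orbit closures (the ``twisted lines'') lying over the singular points of $C$. Thus the problem becomes: show that each of the at least two singular points of $C$ furnished by (C2) really gives rise to such a component — equivalently, that no singular point of $C$ is merely a shadow of the orbifold structure of the weighted projective plane $\PP(3,2,1)=\Proj R$.

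By (C1), $f$ is irreducible, hence squarefree in the UFD $R$; so $R/fR$ is a domain, $C$ is an integral projective curve with finite singular locus, and $R[W]/(W^2-f)$ is normal — in particular $|f|_R$ is a well-defined nonnegative integer. The substantive step is to locate $\operatorname{Sing}(C)$ inside the smooth chart $D_+(z)\cong\A^2$ of $\PP(3,2,1)$. Write $f(x,y,0)=x(px^2+qy^3)$ with $p,q\in k^*$, as in (C3). Then $\{z=0\}\not\subseteq V(f)$, and the partial $f_x|_{z=0}=3px^2+qy^3$ vanishes at no nonzero point of $V(f)\cap\{z=0\}=\{x=0\}\cup\{px^2+qy^3=0\}$ (on the first branch it equals $qy^3$, on the second $2px^2$); hence the surface $V(f)\subseteq\A^3$ is smooth along $\{z=0\}$ off the vertex. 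Consequently $C$ is regular at every point $P$ lying over $\{z=0\}$: near such $P$ the curve $C$ is the $k^*$-quotient of the smooth surface $V(f)$ along an orbit, so it is a curve with at worst a cyclic quotient singularity, and those are smooth. Therefore $\operatorname{Sing}(C)\subseteq D_+(z)$. Over $D_+(z)$ the $k^*$-isotropy is trivial, so $V(f)$ is Zariski-locally a product there; it follows that the twisted lines over $\operatorname{Sing}(C)$ are precisely the one-dimensional components of $V(f_x,f_y,f_z)=\operatorname{Sing}(V(f))$, whence $|f|_R=|\operatorname{Sing}(C)|\ge 2$ by (C2).

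The step that actually carries weight — and the main obstacle — is controlling $C$ at the two singular points $[1:0:0]$ and $[0:1:0]$ of $\PP(3,2,1)$; the point $[1:0:0]$ is harmless, since $f(1,0,0)=p\ne0$ puts it off $C$, but $[0:1:0]$ lies on $C$, and there $C$ could a priori be singular for ``ambient'' reasons even though the branch cone $V(f)$ is smooth. Making the appeal to ``cyclic quotients of smooth curves are smooth'' precise here requires running the local argument honestly (via Luna's slice theorem, or by a direct computation): the isotropy group $\mu_2$ at $[0:1:0]$ acts on the slice $\A^2_{x,z}$ transverse to the orbit by $(x,z)\mapsto(-x,-z)$, hence on the smooth curve germ cut out by $f$ with linear term $-1$, so the $\mu_2$-quotient of that germ is again smooth. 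Everything else — the Euler-relation description of the singular locus of the double cover, the one-line gradient computation supplied by (C3), and the replacement of $\PP(3,2,1)$ by its smooth chart $D_+(z)$ — is routine bookkeeping.
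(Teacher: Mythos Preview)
Your proposal rests on a misreading of the symbol $|f|_R$. In this paper $|f|_R$ is not a cardinality; it is the \emph{absolute degree} of $f$ defined in Section~\ref{SEC:preliminaries}: $|f|_R = \min_{0 \neq D \in \lnd(R)} \deg_D(f)$. Thus $|f|_R \ge 2$ is precisely the assertion that no nonzero locally nilpotent derivation $D$ of $R$ satisfies $D^2 f = 0$. There is no identification $|f|_R = |\operatorname{Sing}(C)|$, and nothing in your argument addresses locally nilpotent derivations at all. The geometric analysis you sketch --- locating $\operatorname{Sing}(C)$ inside the smooth chart $D_+(z)$ of $\PP(3,2,1)$ --- may be correct as far as it goes, but it proves a different statement.

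The paper's proof is of an entirely different character. Assuming for contradiction that some nonzero $D \in \lnd(R)$ has $D^2 f = 0$, one reduces to $D$ irreducible and homogeneous and writes $\ker D = k[g,h]$ with $g,h$ homogeneous (Proposition~\ref{Daigle00result19}). Condition (C2) enters only via Proposition~\ref{8y23874t28746rdhjh}: since $C$ has more than one singular point it is not unicuspidal rational, which forces $Df \neq 0$ for every nonzero $D$ and, combined with Proposition~\ref{curve-singularities}, forces $\rank(D)=2$. Condition (C3) is used to exclude a minimal local slice of degree~$1$. A sequence of degree constraints (Lemmas~\ref{723476ed7dh0927gb}, \ref{rank-two}, \ref{lemma4}) then pins $(\deg g,\deg h)$ down to $\{(3,5),(3,7)\}$, and a final switch to a second derivation with kernel $k[g,r]$ (where $r$ is a minimal local slice) yields an incompatible pair of degrees, the desired contradiction.
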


The inequality $|f|_R\ge 2$ means that there is no nonzero locally nilpotent derivation $\delta$ of $R$ for which $\delta^2f=0$ (see {\it Section 2}).
The proof uses recent results on rigidity of graded rings, due to the first author and Chitayat \cite{Chitayat.Daigle.22, Daigle.ppt2023}.
(An integral domain $A$ of characteristic zero is said to be {\it rigid\/} if the only locally nilpotent derivation $\delta : A \to A$ is the zero derivation.)
The first of these papers generalizes earlier results of Kishimoto, Prokhorov and Zaidenberg \cite{Kishimoto.Prokhorov.Zaidenberg.13} that relate
the cylindricity of a polarized projective variety $X$ to the existence of nontrivial $\G_a$-actions on the affine cone over $X$.
Whereas the result of \cite{Kishimoto.Prokhorov.Zaidenberg.13} requires the varieties involved to be normal,
the result that we use (part (1) of Theorem 1.2 of \cite{Chitayat.Daigle.22}, or Theorem \ref{edh83yf6r79hvujhxu6wrefji9e} in the present paper) does not.
This is crucial to our argument, since the rings that we consider are not generally normal. 

\begin{theorem}\label{main2}
Let $f\in R_9$ satisfy conditions {\rm (C1)-(C3)} and let $R[W]=R^{[1]}$. 
For every integer $n\ge 2$ with $n\not\in 3\Z$, the quotient ring $R[W]/(W^n+f)$ is a rigid rational UFD. 
\end{theorem}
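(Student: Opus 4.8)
The plan is to establish the three claimed properties of $A := R[W]/(W^n+f)$ separately, with rigidity being the central point. Set $A = k[x,y,z,w]$ where $w$ is the image of $W$, so $w^n = -f$ in $A$. Since $f \in R_9$ is irreducible by (C1) and $n \notin 3\Z$, the grading on $R$ with $\deg x = 3$, $\deg y = 2$, $\deg z = 1$ extends to an $\N$-grading on $A$ with $\deg w = 9/\gcd(9,n)$ after rescaling; more directly, assign weights so that $W^n$ and $f$ are homogeneous of the same degree, which is possible exactly because $9$ and $n$ are handled by taking $\deg W = 9 \cdot n / (n \cdot \gcd) $... in any case $A$ is a graded ring, finitely generated over $k$, with $A_0 = k$.

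\textbf{UFD.} First I would check that $W^n + f$ is irreducible in $R[W] = R^{[1]}$. Because $f$ is irreducible in the UFD $R$, it is a prime element, so $W^n + f$ is irreducible by Eisenstein's criterion applied at the prime $fR$ (here one uses $n \geq 2$). Hence $A = R[W]/(W^n+f)$ is a domain. To see it is a UFD, I would use Nagata's criterion or the standard fact that $R[W]/(W^n - g)$ is a UFD when $g$ is squarefree in a UFD $R$ with suitable conditions; alternatively, one localizes: $A[1/\bar x]$ or $A$ localized away from the singular locus is a regular ring, and $\mathrm{Cl}(A)$ can be computed via the exact sequence for the cyclic cover. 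Since $f$ defines an integral (irreducible) hypersurface, the divisor class group of the cyclic cover is controlled and one gets $\mathrm{Cl}(A) = 0$; I would cite the relevant general statement on cyclic covers of UFDs.

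\textbf{Rationality.} The function field of $A$ is $\Frac(R)(w) = k(x,y,z)[w]/(w^n+f)$, a degree-$n$ extension of the rational field $k(x,y,z)$. I would argue rationality geometrically: the affine variety $\Spec A$ is the cyclic cover of $\A^3$ branched along $\{f=0\}$, and since $f$ has degree profile $(3,2,1)$ in the weighted sense with $f(x,y,0) = x(px^2+qy^3)$ reducible along $z=0$, one can solve for one variable. Concretely, because $f$ is monic of degree $1$ in $x$-free... — more carefully, since $f \in R_9$ and $\deg_z$ terms let us write $f$ as a polynomial in $z$; if $f$ has a term linear in one of the variables one inverts it rationally. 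The cleanest route: $f(x,y,0) = x(px^2+qy^3)$ shows $f$ is not in $zR + (\text{higher})$ trivially, so I expect $f$ to be, up to the grading, expressible so that $\Frac(A)$ is purely transcendental of degree $3$ over $k$; I would verify this directly from the shape of $f$.

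\textbf{Rigidity (the main obstacle).} This is where Theorems \ref{main1} and \ref{edh83yf6r79hvujhxu6wrefji9e} enter. Suppose $\delta$ is a nonzero LND of $A$. Since $A$ is graded with $A_0 = k$ and isolated relevant grading, a standard homogenization argument replaces $\delta$ by a nonzero homogeneous LND, so $\Spec A$ is a cylinder-like cone and by the Kishimoto–Prokhorov–Zaidenberg-type result (Theorem \ref{edh83yf6r79hvujhxu6wrefji9e}), the polarized projective variety $X = \Proj A$ is cylindrical, i.e.\ contains a Zariski-open subset of the form $Z \times \A^1$. Now $X = \Proj A$ is the cyclic cover of the weighted projective plane $\Proj R \cong \PP(3,2,1) \cong \PP^2$ branched along the curve $C = \Proj(R/fR)$, which by (C1) is irreducible and by (C2) has at least two singular points. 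The presence of an $\A^1$-cylinder on $X$ forces, via \ref{main1}, a contradiction: condition (C2)–(C3) together say $C$ is not of a type admitting such a cylinder; more precisely the pullback of the cylinder structure to $\PP^2$ would produce a nonzero LND $\partial$ of $R$ with $\partial^2 f = 0$, contradicting $|f|_R \geq 2$ from Theorem \ref{main1}. The delicate point I expect to wrestle with is descending the LND from $A$ to $R$: one must show that an $\A^1$-fibration on the cyclic cover $X \to \PP^2$ descends (or is compatible with) an $\A^1$-fibration on $\PP^2$, using that $n \notin 3\Z$ so that the ramification behavior along $C$ and along the fibers of the ruling are incompatible unless the ruling is "horizontal" with respect to the cover — this is exactly where the hypothesis $n \notin 3\Z$ must be used, and making that argument precise (likely via the ramification formula and the fact that $\deg C = 9$ relative to $\PP(3,2,1)$) is the heart of the proof.
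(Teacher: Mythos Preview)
Your proposal has a genuine gap and also overlooks the key tool the paper uses. The paper's proof of Theorem~\ref{main2} is a one-line application of Proposition~\ref{FMJ13}: since $\deg f = 9$ and $n \notin 3\Z$ implies $\gcd(n,9)=1$, that proposition (parts (a), (b), (c)) immediately gives that $B = R[W]/(W^n+f)$ is a domain with $\Frac(B) \cong \Frac(R)$ (hence rational), is a UFD (since $R$ is a UFD and $f$ is prime), and is rigid if and only if $|f|_R \ge 2$ --- which holds by Theorem~\ref{main1}. You never invoke Proposition~\ref{FMJ13}, and your substitutes for all three parts are incomplete.

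The most serious gap is in your rigidity argument. You want to start from a nonzero homogeneous LND on $A$, pass via Theorem~\ref{edh83yf6r79hvujhxu6wrefji9e} to a cylinder on $\Proj A$, and then \emph{descend} to produce $\partial \in \lnd(R)$ with $\partial^2 f = 0$, contradicting Theorem~\ref{main1}. But you give no mechanism for this descent: an $\A^1$-fibration on the cyclic cover $\Proj A \to \PP(3,2,1)$ need not be the pullback of one on the base, and your ramification sketch (``the ruling must be horizontal because $n \notin 3\Z$'') is not an argument. This is precisely the content of Proposition~\ref{FMJ13}(c), which is a nontrivial result of Freudenburg--Moser-Jauslin characterizing homogeneous LNDs of $R[W]/(W^n-r)$ in terms of those of $R$; it is not something you can expect to recover from a vague cylindricity/ramification heuristic. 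Your rationality argument is similarly adrift: you never actually exhibit $\Frac(A)$ as purely transcendental, whereas Proposition~\ref{FMJ13}(a) gives the much stronger statement $\Frac(A) \cong \Frac(R)$ directly from $\gcd(n,\deg f)=1$. Finally, your grading discussion (``$\deg W = 9 \cdot n /(n\cdot\gcd)$\dots'') is muddled; the grading relevant to Proposition~\ref{FMJ13} keeps $\deg f = 9$ on $R$ and uses only that $n$ is coprime to $9$.
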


The proof of this theorem uses {\it Theorem\,\ref{main1}} together with results due to the second author and Moser-Jauslin \cite{Freudenburg.Moser-Jauslin.13}
which characterize the homogeneous locally nilpotent derivations of $R[W](W^n-r)$, $r\in R$, in terms of the homogeneous locally nilpotent derivations of $R$. 

\begin{theorem}\label{main3}
Let $f\in R_9$ satisfy conditions {\rm (C1)-(C4)} and let $A = R[W]/(W^n+f)$ where $R[W]=R^{[1]}$ and $n\ge 2$ is such that $n\not\in 3\Z$.
Let $\Ggoth$ be the $\N$-grading of $A$ defined by declaring that $x \in A_{3n}$,  $y \in A_{2n}$,  $z \in A_{n}$ and $w \in A_{9}$,
and let $T(\Ggoth)$ be the subtorus of $\Aut_k(A)$ determined by $\Ggoth$.
Then $\Aut_k(A) = T(\Ggoth) \cong k^*$.
\end{theorem}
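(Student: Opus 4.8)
The plan is to show that $\Aut_k(A)$ fixes the vertex of the affine cone $\Spec A$, to deduce that $\Aut_k(A)$ embeds in a general linear group and hence is a linear algebraic group, and then to identify that group with $T(\Ggoth)$ using the rigidity of $A$ (Theorem~\ref{main2}) together with conditions (C3)--(C4). Write $\mathfrak{m}_0=A_+=\bigoplus_{d\ge1}A_d$ and let $P_0\in\Spec A$ be the corresponding closed point. One checks that $\dim_k\mathfrak{m}_0/\mathfrak{m}_0^2=4$, the classes of $x,y,z,w$ spanning its four one‑dimensional $\Ggoth$‑graded pieces (degrees $3n,2n,n,9$, pairwise distinct since $9\notin\{n,2n,3n\}$ when $n\notin3\Z$), and that $T(\Ggoth)\cong\G_m\cong k^*$ because $\gcd(n,9)=1$; thus only $\Aut_k(A)\subseteq T(\Ggoth)$ needs proof. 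First I would show that $\varphi(\mathfrak{m}_0)=\mathfrak{m}_0$ for every $\varphi\in\Aut_k(A)$. Since $f$ is homogeneous and irreducible (C1), $V(f)\subseteq\Spec R=\A^3$ is an irreducible reduced cone, so $\operatorname{Sing}(V(f))$ is a proper closed conical — hence one‑dimensional — subset, and by (C2) it has at least two irreducible components (for instance the closures of the orbits of the weighted $\G_m$ over the singular points of $\Proj(R/fR)$), any two of which meet only at the origin. As $\operatorname{char}k=0$ forces $W=0$, hence $f=0$, along the singular locus, the Jacobian criterion identifies $\operatorname{Sing}(\Spec A)$ with $\operatorname{Sing}(V(f))\times\{0\}$; therefore $P_0$ is the unique point of $\Spec A$ lying on two or more irreducible components of $\operatorname{Sing}(\Spec A)$, and automorphisms must fix it.

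Next I would use this to put $\Aut_k(A)$ inside $\GL_4(k)$. Each $\varphi$ induces $\bar\varphi\in\GL(\mathfrak{m}_0/\mathfrak{m}_0^2)\cong\GL_4(k)$, and $\varphi\mapsto\bar\varphi$ is injective: if $\bar\varphi=\operatorname{id}$, then, the multiplication maps $\operatorname{Sym}^i(\mathfrak{m}_0/\mathfrak{m}_0^2)\twoheadrightarrow\mathfrak{m}_0^i/\mathfrak{m}_0^{i+1}$ being $\varphi$‑equivariant, $\varphi$ acts trivially on $\bigoplus_i\mathfrak{m}_0^i/\mathfrak{m}_0^{i+1}$, hence on each $A/\mathfrak{m}_0^N$, hence $\varphi(a)-a\in\bigcap_N\mathfrak{m}_0^N=0$ for all $a$ ($A$ being a domain), so $\varphi=\operatorname{id}$. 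Thus $\Aut_k(A)\hookrightarrow\GL_4(k)$, and I would argue it is in fact a closed, hence algebraic, subgroup; under this embedding $T(\Ggoth)$ is the diagonal torus $\operatorname{diag}(t^n,t^{2n},t^{3n},t^9)$ in the basis of classes of $z,y,x,w$.

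Then I would pin $\Aut_k(A)$ down. Since $A$ is rigid, $\Aut_k(A)$ contains no $\G_a$, so its identity component, being a connected linear algebraic group without nontrivial unipotent subgroups, is a torus; I claim it is $T(\Ggoth)$. Any torus $T'\subseteq\Aut_k(A)$ with $T'\supseteq T(\Ggoth)$ stabilizes every $A_d$, and since $A_n$, $A_{2n}=ky\oplus kz^2$ and $A_{3n}=kx\oplus kzy\oplus kz^3$ lie in $R=k[x,y,z]$, $T'$ stabilizes $R$ and acts there diagonalizably, making $R$ a graded ring in which $f$ is homogeneous. Here (C3)--(C4) are decisive: the monomials $x^3,xy^3$ of $f$ (coefficients $p,q\in k^*$) and its $z$‑linear part $yz(rx^2+sy^3)$ with $3ps-qr\ne0$ force every grading of $R$ making $f$ semi‑invariant to be a scalar multiple of $\Ggoth|_R$, so $T'$ acts on $R$ through a one‑dimensional quotient; as the kernel of $T'\to\Aut_k(R)$ consists of automorphisms fixing $R$ pointwise, hence of the form $w\mapsto\zeta w$ with $\zeta^n=1$, it is finite, hence (as $T'$ is connected) trivial, and $\dim T'\le1$. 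Thus $\Aut_k(A)^\circ=T(\Ggoth)$ is the (unique) maximal torus. A parallel (C3)--(C4) computation — $z\nmid f$ eliminates the unipotent homogeneous automorphisms of $R$, the condition $3ps-qr\ne0$ eliminates the remaining diagonal ones outside $T(\Ggoth)|_R$, and $\gcd(n,9)=1$ fixes the admissible factors $w\mapsto\mu w$ — shows the $\Ggoth$‑homogeneous automorphisms of $A$, i.e.\ $C_{\Aut_k(A)}(T(\Ggoth))$, are exactly $T(\Ggoth)$. Finally $N_{\Aut_k(A)}(T(\Ggoth))/T(\Ggoth)\hookrightarrow\Aut(\G_m)=\{\pm1\}$, while a degree‑reversing automorphism would carry $A_d$ ($d>0$) into $A_{-d}=0$, which is impossible; so $N_{\Aut_k(A)}(T(\Ggoth))=T(\Ggoth)$, and since $\Aut_k(A)$ normalizes its identity component we conclude $\Aut_k(A)=T(\Ggoth)\cong k^*$.

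The heart of the argument, and the main obstacle, is the last step — specifically the two computations invoking (C3)--(C4): that the gradings of $R$ rendering $f$ semi‑invariant are precisely the multiples of $\Ggoth|_R$, and that the $\Ggoth$‑homogeneous automorphisms of $A$ already lie in $T(\Ggoth)$. These are exactly the places where conditions (C3) and (C4), which do not appear in Theorems~\ref{main1}--\ref{main2}, are needed, and carrying them out carefully (including the case analysis in which $T'$‑homogeneous coordinates differ from $x,y,z$) is the real work. A second, more technical point is the assertion in the second step that $\Aut_k(A)$ is genuinely an algebraic group rather than merely an abstract subgroup of $\GL_4(k)$ — that no "wild" automorphism escapes the finite‑dimensional picture — which requires additional care, e.g.\ base‑changing to an uncountable algebraically closed field and descending, or invoking a representability result for $\Aut$.
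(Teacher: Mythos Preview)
Your strategy diverges from the paper's, and the gap you yourself flag is the decisive one. The paper never attempts to realize $\Aut_k(A)$ as a linear algebraic group. Instead, having established rigidity (Theorem~\ref{main2}), it invokes the Arzhantsev--Gaifullin theorem (Theorem~\ref{Arz-Gai}): a rigid affine $k$-domain has a unique maximal subtorus $\mathbb{T}\subseteq\Aut_k(A)$, and $\mathbb{T}$ is normal. A direct calculation using (C3)--(C4) (Lemmas~\ref{centralizer1} and~\ref{centralizer2}) shows $\CC(T(\Ggoth))=T(\Ggoth)$; since $\mathbb{T}\supseteq T(\Ggoth)$ is abelian, $\mathbb{T}\subseteq\CC(T(\Ggoth))=T(\Ggoth)$, whence $T(\Ggoth)=\mathbb{T}$ is normal. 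A short monoid argument (Corollary~\ref{E2sd3sdfpoiu213er098asdCmnasdlkj9w87y}: an $\N$-grading has only the identity monoid automorphism) then yields $\Aut_k(A)=\CC(T(\Ggoth))=T(\Ggoth)$. No singular-locus analysis, no embedding into $\GL_4$, no identity-component reasoning, and no normalizer computation are needed; the only substantial computation is the centralizer one, which you correctly identify as the heart of the matter.

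The point at which your outline genuinely breaks is step~3: that the injective map $\Aut_k(A)\hookrightarrow\GL(\mathfrak{m}_0/\mathfrak{m}_0^2)$ has Zariski-closed image. Without this, ``identity component'' and ``connected linear algebraic group with no $\G_a$'' have no meaning, and steps~4--8 cannot start. Your suggested repairs are not evidently sufficient: representability of the automorphism functor yields only an ind-group structure, and showing it is finite-type in this instance is essentially the content you are trying to extract. The Arzhantsev--Gaifullin theorem is precisely the device that circumvents this --- it delivers normality of the maximal subtorus for rigid affine domains without any claim that $\Aut_k(A)$ itself is algebraic. Your route, if completed, would give a more geometric explanation (via (C2) and the singular locus) of why the cone point is fixed, and would avoid citing Arzhantsev--Gaifullin; but it trades that citation for a nontrivial foundational input, and in its present form the trade is not consummated.
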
 

See paragraph \ref{7t65fr56gb12dwe8} for details about $T(\Ggoth)$.
This description of the automorphism group relies on {\it Theorem\,\ref{main2}} and the theorem of Arzhantsev and Gaifullin \cite{Arzhantsev.Gaifullin.17} 
which asserts that the group of algebraic automorphisms of a rigid affine $k$-variety contains a unique maximal torus
(uniqueness of the maximal torus was first proved for rigid surfaces by Flenner and Zaidenberg \cite{Flenner.Zaidenberg.05b}). 

Since the polynomial $F$ in line (\ref{Cayley}) satisfies conditions (C1)-(C4), as the reader can check, Theorem \ref{main3} has the following special case:

\begin{corollary} \label {cor1}
${\rm Aut}_k(Q)\cong k^*$
\end{corollary}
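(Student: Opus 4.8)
The plan is to deduce Corollary~\ref{cor1} directly from Theorem~\ref{main3} by verifying that the Cayley--Hermite polynomial $F$ of line~\eqref{Cayley} satisfies conditions (C1)--(C4) and that the value $n = 2$ is admissible. Since $Q \cong_k R[W]/(W^2 - F)$ (equivalently $R[W]/(W^2 + (-F))$, and $-F$ satisfies (C1)--(C4) whenever $F$ does, as sign changes preserve all four conditions), and since $n = 2 \notin 3\Z$, Theorem~\ref{main3} will give $\Aut_k(Q) = T(\Ggoth) \cong k^*$ once the hypotheses are checked. So the corollary reduces entirely to the ``as the reader can check'' claim, and the proof I would write is precisely that verification.

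First I would address (C3) and (C4), which are the easy substitution checks. Setting $z = 0$ in \eqref{Cayley} gives $F(x,y,0) = -432x^3 + 8xy^3 = x(8y^3 - 432x^2)$, so (C3) holds with $p = -432 \in k^*$ and $q = 8 \in k^*$ (here one uses $\mathrm{char}\,k = 0$ so that $8, 432 \neq 0$ in $k$). For (C4), compute $F_z = 3x^2z^2 - 4xy^2z + y^4 - 72x^2y$, so $F_z(x,y,0) = y^4 - 72x^2y = y(y^3 - 72x^2)$, giving (C4) with $r = -72$, $s = 1$; then $\det\left(\begin{smallmatrix} 3p & r \\ q & s \end{smallmatrix}\right) = \det\left(\begin{smallmatrix} -1296 & -72 \\ 8 & 1 \end{smallmatrix}\right) = -1296 + 576 = -720 \neq 0$. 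These computations are mechanical. (If one prefers to work with $+F$, note $W^2 - F = -(W^2 + F)$ up to replacing $W$ by $iW$ is not quite right in characteristic issues, so cleaner is to observe directly that $R[W]/(W^2-F) \cong R[W]/(W^2 + \tilde F)$ with $\tilde F = -F$ via $W \mapsto W$ and noting $\tilde F$ still satisfies (C1)--(C4): irreducibility is unchanged, the projective curve is the same up to the involution so has the same singular locus, and in (C3)--(C4) the parameters just change sign, preserving nonvanishing and the determinant condition up to sign.)

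The genuinely substantive points are (C1) and (C2), and this is where the main obstacle lies. For (C1), irreducibility of $F$ in $R = k[x,y,z]$: I would argue that $F$ is irreducible as a polynomial in $z$ over $k(x,y)$ — it is a cubic in $z$ with leading coefficient $x^2$, and a factorization would force a root in $k(x,y)$; after clearing by $x^2$ and examining the resulting monic cubic (or its discriminant) one sees no such root exists, for instance because the curve $F = 0$ is geometrically irreducible, which is classically known since $Q$ is a UFD and $F = 16w^2$ expresses the square of the irreducible invariant $w$ (alternatively, the hyperelliptic-type structure forces irreducibility). For (C2), I would exhibit or count the singular points of the projective plane curve $C = \Proj(R/FR) \subset \Proj R$ where $\Proj R$ is the weighted projective space with weights $(3,2,1)$: one computes the singular locus as the common zero set of $F, F_x, F_y, F_z$, and checks that it contains at least two distinct points. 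In fact the classical theory of the quintic (the structure of the nullcone and the discriminant locus) tells us these singular points correspond to the strata of binary quintics with repeated roots, and there are visibly several such strata, so $|C_{\mathrm{sing}}| \geq 2$. The main work — and the part I would expect to occupy most of the proof — is this explicit determination of $C_{\mathrm{sing}}$: solving the system $F = F_x = F_y = F_z = 0$ in the weighted projective space, which is a finite but nontrivial elimination computation, and then verifying that the solution set has at least two geometric points (equivalently, that $C$ is not, say, a curve with a single cusp or node). Once (C1)--(C4) are confirmed, invoking Theorem~\ref{main3} with $n=2$ and $A = Q$ completes the proof, and the identification $T(\Ggoth) \cong k^*$ is automatic from the fact that $\Ggoth$ is a genuine (nontrivial, positive) $\N$-grading of the three-dimensional ring $Q$, giving a one-dimensional torus of diagonal automorphisms.
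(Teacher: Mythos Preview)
Your proposal is correct and follows exactly the paper's approach: the paper simply asserts that $F$ satisfies (C1)--(C4) ``as the reader can check'' and then invokes Theorem~\ref{main3} with $n=2$. You have supplied the verification of (C3) and (C4) (which matches), and your outline for (C1) and (C2) goes beyond what the paper itself provides, since the paper leaves all four checks entirely to the reader.
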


We also have:

\begin{corollary}\label{main4}
$\Aut_{\SL_2}( V_5 ) \cong k^*$
\end{corollary}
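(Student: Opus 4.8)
The plan is to deduce Corollary~\ref{main4} from Corollary~\ref{cor1} by relating the group $\Aut_{\SL_2}(V_5)$ of $\SL_2$-equivariant automorphisms of the affine space $V_5 \cong \A^6_k$ to the group $\Aut_k(Q)$ of $k$-algebra automorphisms of the invariant ring $Q = k[V_5]^{\SL_2}$. An $\SL_2$-equivariant automorphism $\varphi$ of $V_5$ induces, via pullback $\varphi^*$ on $k[V_5]$, an automorphism that commutes with the $\SL_2$-action, hence restricts to a $k$-algebra automorphism of $Q$. This defines a group homomorphism $\Theta : \Aut_{\SL_2}(V_5) \to \Aut_k(Q)$. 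Since $\Aut_k(Q) \cong k^*$ by Corollary~\ref{cor1}, and since the standard torus action (scaling $x,y$ in $V_5$ by $t$, which acts on $V_5$ and commutes with $\SL_2$ because the $\SL_2$-action is by substitution of linear forms) already gives a copy of $k^*$ inside $\Aut_{\SL_2}(V_5)$ mapping isomorphically onto $\Aut_k(Q) \cong k^*$, it suffices to show $\Theta$ is injective; then $\Aut_{\SL_2}(V_5) \cong k^*$.

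First I would set up $\Theta$ carefully and check it is well-defined and a homomorphism. Next I would exhibit the section: the one-parameter group $t \mapsto (F \mapsto F(t^{-1}x, t^{-1}y))$ for $F \in V_5$ (equivalently, scaling the coordinate $a_i$ of a quintic $\sum a_i x^{5-i}y^i$ by $t^5$) is $\SL_2$-equivariant and induces on $Q$ the grading torus, which is all of $\Aut_k(Q)$. So $\Theta$ is surjective and the image of this section is a complement; the result follows once injectivity of $\Theta$ is established. The heart of the argument is therefore: if $\varphi \in \Aut_{\SL_2}(V_5)$ acts as the identity on $Q$, then $\varphi = \mathrm{id}_{V_5}$.

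For injectivity, the key point is that the generic fibers of the quotient map $\pi : V_5 \to V_5 /\!\!/ \SL_2 = \Spec Q$ are single $\SL_2$-orbits, and an $\SL_2$-equivariant $\varphi$ trivial on $Q$ preserves each fiber of $\pi$; on a generic fiber, which is a homogeneous space $\SL_2/\Gamma$ (here $\Gamma$ is trivial or finite, since the generic stabilizer for the binary quintic is known to be trivial — a generic binary quintic has no nontrivial automorphisms for $n \ge 3$), an $\SL_2$-equivariant self-map is translation by a fixed element of the centralizer, i.e.\ is the identity when the stabilizer is trivial. Thus $\varphi$ restricts to the identity on a dense open subset of $V_5$, hence $\varphi = \mathrm{id}$. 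An alternative, perhaps cleaner, route: the invariant field $k(V_5)^{\SL_2} = \Frac(Q)$ together with the $\SL_2$-action determines $k(V_5)$ as the function field; concretely, $k(V_5)$ is generated over $\Frac(Q)$ by the slice coordinates, and one shows $k(V_5)^{\SL_2}$-algebra automorphisms of $k(V_5)$ commuting with $\SL_2$ are trivial by descent / Galois-type reasoning for the torsor.

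The main obstacle I anticipate is making the generic-orbit argument rigorous without normality hypotheses and while only assuming facts actually available in the excerpt: one must justify that a generic binary quintic has trivial $\SL_2$-stabilizer (classical, but needs a citation or a short direct argument using the invariants/covariants), and one must handle the passage from "identity on a dense open set" to "identity everywhere" for a polynomial automorphism (routine, since $V_5$ is reduced and irreducible). A secondary subtlety is checking that $\Theta$'s image is exactly $k^*$ and not a priori smaller — but this is immediate once the explicit scaling section is produced, since that section already surjects onto $\Aut_k(Q) \cong k^*$. With injectivity in hand, $\Theta$ is an isomorphism and Corollary~\ref{main4} follows.
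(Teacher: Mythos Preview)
Your approach is genuinely different from the paper's. The paper does not attempt to prove injectivity of $\Theta : \Aut_{\SL_2}(V_5) \to \Aut_k(Q)$ directly. Instead it invokes a theorem of Kurth, valid over $\C$, asserting that $\Aut_\C(Q_n) \cong \C^*$ implies $\Aut_{\SL_2}(V_n) \cong \C^*$; this together with Corollary~\ref{cor1} gives the result for $k=\C$. The paper then runs a Lefschetz-principle argument (descend to an algebraically closed subfield of finite transcendence degree over $\Q$, embed it in $\C$, base-change) to remove the hypothesis $k=\C$. So the injectivity you are trying to prove is precisely the content of Kurth's result, treated as a black box.

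Your injectivity argument has a real gap. You assert that on a generic fiber, a free $\SL_2$-orbit, ``an $\SL_2$-equivariant self-map is translation by a fixed element of the centralizer, i.e.\ is the identity when the stabilizer is trivial.'' This is false: for $G=\SL_2$ acting on a free orbit $G/\{e\}\cong G$ by left translation, the $G$-equivariant self-maps are exactly the right translations, a group isomorphic to $G$ itself, not to the center. Concretely, if $\varphi\in\ker\Theta$ and $p$ has trivial stabilizer, write $\varphi(p)=g(p)\cdot p$ for a unique $g(p)\in\SL_2$; equivariance only yields $g(h\cdot p)=h\,g(p)\,h^{-1}$, and nothing on a single orbit forces $g(p)=e$. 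Proving $g\equiv e$ genuinely requires global input---that $\varphi$ is a polynomial automorphism of all of $\A^6$, compatible with the degenerations to orbits of positive-dimensional stabilizer---and supplying this is exactly the work Kurth does. Your alternative ``torsor descent'' route has the same defect: the equivariant automorphism group of an $\SL_2$-torsor over a base is its gauge group (sections of the adjoint bundle), which is typically large.
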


We will see that the case $k=\C$ of {\it Corollary\,\ref{main4}} follows from {\it Corollary\,\ref{cor1}} and a result of \cite{Kurth.97},
and we will show that the hypothesis $k=\C$ can be removed.

\medskip

\paragraph{\bf Acknowledgment.} The authors wish to thank Frank Kutzschebauch of the University of Bern for bringing the paper of Kurth to their attention.

\section{preliminaries}   \label {SEC:preliminaries}

For a field $k$,  a {\bf $k$-domain} is an integral domain which is also a $k$-algebra.
Given a ring $B$ and $n\in\N$, $B^{[n]}$ denotes the polynomial ring in $n$ variables over $B$. 
If $B$ is a ring and $b \in B$, we write $B_b$ for the localized ring $S^{-1}B$ with $S = \{1,b,b^2,b^3,\dots\}$.

\paragraph{\bf Locally nilpotent derivations.}\label{LNDS} 
We gather several definitions and properties for locally nilpotent derivations used in subsequent parts of the paper.
The reader is referred to \cite{Freudenburg.17} for a detailed treatment of the subject. 

Assume that $k$ is a field of characteristic $0$ and that $B$ is a $k$-domain.

A {\bf locally nilpotent derivation} of $B$ is a $k$-derivation $D : B \to B$ such that, for each $f\in B$, there exists $n\in\N$ with $D^nf=0$.
The set of locally nilpotent derivations of $B$ is denoted ${\rm LND}(B)$. 
One says that $B$ is {\bf rigid} if ${\rm LND}(B)=\{ 0\}$. 

Let $D \in \lnd(B)$.
Given $f \in B \setminus \{0\}$, the number $\deg_D(f) = -1 + \min\setspec{ n \in \N }{ D^n f =0 }$ is called the {\bf $D$-degree} of $f$;
one also defines $\deg_D(0) = -\infty$.
The map $\deg_D : B \to \N \cup \{-\infty\}$ satisfies the following conditions for all $f,g \in B$:
(i)~$\deg_D(f) < 0$ $\Leftrightarrow$ $f=0$;
(ii)~$\deg_D(fg) = \deg_D(f) + \deg_D(g)$;
(iii)~$\deg_D(f+g) \le \max( \deg_D(f) , \deg_D(g) )$.

Given nonzero $f\in B$ the {\bf absolute degree} of $f$, denoted $|f|_B$, is the minimum value of $\deg_D(f)$ for nonzero $D\in {\rm LND}(B)$.
If there are no nonzero $D\in {\rm LND}(B)$ (i.e., if $B$ is rigid) then we define $|f|_B = +\infty$. 

One says that a derivation $D : B \to B$ is {\bf irreducible} if $B$ is the only principal ideal of $B$ that contains $D(B)$.

Let $D\in {\rm LND}(B)$ and $A=\krn D = \setspec{ f \in B }{ Df=0 }$.
Since $A = \setspec{ f \in B }{ \deg_D(f) \le 0}$, it follows that $A$ is factorially closed in $B$, i.e.,
if $a,b \in B \setminus \{0\}$ satisfy $ab\in A$ then $a\in A$ and $b\in A$. 
The set $A \cap D(B)$ is an ideal of $A$, called the {\bf plinth ideal} of $D$.
If $f \in B$ satisfies $\deg_D(f)=1$ (equivalently, $Df \neq 0$ and $D^2f=0$), one says that $f$ is a {\bf local slice} for $D$;
note that if $D \neq 0$ then $D$ has a local slice.
A {\bf slice} for $D$ is an element $s \in B$ satisfying $Ds=1$ (so every slice is a local slice).
A local slice $f$ is said to be {\bf minimal} if, whenever $A[f]\subseteq A[g]$ for another local slice $g$, we have $A[f]=A[g]$. 

If $B=\bigoplus_{i\in\Z}B_i$ is a $\Z$-grading then $D$ is {\bf homogeneous} if there exists $d\in\Z$ such that $DB_i\subseteq B_{i+d}$ for all $i\in\Z$.
If $D \neq 0$ then this $d$ is unique and is called the {\bf degree} of $D$.

For the polynomial ring $B=k^{[n]}$ and $D\in {\rm LND}(B)$, the {\bf corank} of $D$ equals 
the greatest integer $r$ such that there exists a system of variables $B=k[x_1,\hdots ,x_n]$ with $Dx_1=\cdots =Dx_r=0$. The {\bf rank} of $D$ is $n-r$. 

\smallskip

Throughout \ref{slice}--\ref{rank-two}, $k$ is an arbitrary field of characteristic zero.

\begin{lemma}\label{slice} {\rm (See \cite{Freudenburg.17}, Principle 11)}
Given $D\in {\rm LND}(B)$ let $A=\krn D$. If $f\in B$ is a local slice for $D$ and $a=Df$, then $B_a=A_a[f]\cong A_a^{[1]}$. 
\end{lemma}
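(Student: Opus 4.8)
The plan is to reduce the statement to a transparent computation inside the localized ring $B_a$ and then identify it with a polynomial ring over $A_a$. First I would recall the standard fact (the ``slice theorem'' in this localized form) that underlies the claim: if $s \in B$ satisfies $Ds = 1$, then the map $\pi = \sum_{i \ge 0} \frac{(-1)^i}{i!} s^i D^i : B \to \krn D$ is a well-defined $k$-algebra retraction (the sum is finite on each element by local nilpotence), and $B = A[s] \cong A^{[1]}$. So the first step is to produce a slice after inverting $a$. Since $f$ is a local slice, $Df = a \ne 0$ and $D^2 f = 0$; hence in $B_a$ the element $s := f/a$ satisfies $Ds = (Df)/a = 1$, using that $a \in A = \krn D$ so $D$ extends to $B_a$ with $D(b/a^n) = (Db)/a^n$.

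Next I would apply the slice theorem to the $\krn(D|_{B_a})$-algebra $B_a$ with slice $s$. This gives $B_a = (\krn D|_{B_a})[s] \cong (\krn D|_{B_a})^{[1]}$. The remaining point is to check that $\krn(D|_{B_a}) = A_a$, i.e., that the kernel of the extended derivation is exactly the localization of $A$ at the multiplicative set $\{1,a,a^2,\dots\}$. The inclusion $A_a \subseteq \krn(D|_{B_a})$ is immediate. For the reverse inclusion, take $b/a^n \in B_a$ with $D(b/a^n) = 0$; then $(Db)/a^n = 0$ in $B_a$, so $a^m Db = 0$ in $B$ for some $m$, and since $B$ is a domain and $a \ne 0$ we get $Db = 0$, i.e. $b \in A$ and $b/a^n \in A_a$. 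Finally, since $s = f/a$ and $a$ is already invertible in $B_a$, we have $A_a[s] = A_a[f/a] = A_a[f]$, which yields $B_a = A_a[f] \cong A_a^{[1]}$, as claimed.

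I do not expect any genuine obstacle here: the only subtlety is the bookkeeping around extending $D$ to the localization and confirming that localization commutes with taking the kernel, which works cleanly because $a$ lies in $\krn D$ and $B$ is an integral domain. One should also note that $A_a[f] \cong A_a^{[1]}$ as $A_a$-algebras requires knowing $f$ (equivalently $s$) is transcendental over $A_a$; this follows from the slice theorem, since the retraction $\pi$ exhibits $B_a$ as a polynomial ring in $s$ over its kernel. If a reference is preferred over reproducing the argument, the statement is exactly Principle 11 of \cite{Freudenburg.17}, so the ``proof'' can simply be a pointer to that source together with the observation that $f/a$ is a slice of the extended derivation on $B_a$.
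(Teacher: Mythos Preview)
Your argument is correct and is exactly the standard proof of this localized slice theorem. Note that the paper does not actually give a proof of this lemma at all: it is stated with a citation to \cite{Freudenburg.17}, Principle 11, and nothing more. So your proposal is strictly more detailed than what the paper provides, and your final remark---that one may simply point to the reference together with the observation that $f/a$ is a slice of the extended derivation on $B_a$---is precisely what the paper does (minus even that observation).
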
 

\begin{proposition}\label{AB-theorem}  {\rm (\cite{Freudenburg.17}, Theorem 2.50)} Let $B$ be a $k$-domain and $D\in {\rm LND}(B)$, $D\ne 0$. 
Suppose that $u,v\in\krn D$ and $x,y\in B$ are nonzero, and $a,b\in\Z$, $a,b\ge 2$, are such that $ux^a+vy^b\ne 0$. 
\begin{itemize}
\item [{\bf (a)}]  If $D(ux^a+vy^b)=0$ then $Dx=Dy=0$.
\item [{\bf (b)}]  If $D^2(ux^a+vy^b)=0$ and $(a,b) \neq (2,2)$ then $Dx=Dy=0$. 
\end{itemize}
\end{proposition}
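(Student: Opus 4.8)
The plan is to reduce both parts to a single computation by passing to the localization that trivializes $D$. First I would apply Lemma \ref{slice}: since $D \neq 0$, pick a local slice $t \in B$ with $c := Dt \in A \setminus \{0\}$ (here $A = \krn D$), so that $B_c = A_c[t] \cong A_c^{[1]}$. Extend $D$ to $B_c$; on $B_c$ it becomes $c\,\partial/\partial t$, so for any $\xi \in B_c$ the $D$-degree of $\xi$ equals its degree as a polynomial in $t$ over $A_c$. Write $x = \sum_{i} x_i t^i$ and $y = \sum_j y_j t^j$ with $x_i, y_j \in A_c$, let $m = \deg_t x$ and $n = \deg_t y$, and note that $Dx = 0 \iff m \le 0 \iff x \in A_c$ (and similarly for $y$); since $A$ is factorially closed in $B$, $x \in A_c \cap B = A$, so it suffices to prove $m \le 0$ and $n \le 0$.

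For part (a): since $u, v \in A$ and $D(ux^a + vy^b) = 0$, the element $ux^a + vy^b$ lies in $A_c$, i.e.\ has $t$-degree $\le 0$. Its top-degree term in $t$ is $u\, x_m^a t^{am} + v\, y_n^b t^{bn}$ when $am \neq bn$, forcing $am = bn$ if both $m, n > 0$; and then the coefficient of $t^{am}$ is $u\,x_m^a + v\,y_n^b$, which must vanish. The main obstacle is to rule out this cancellation. I would argue that $u x_m^a = -v y_n^b$ with $am = bn$, $a,b \ge 2$ and $m,n \ge 1$ contradicts unique factorization considerations — more carefully, one reduces to the case where $B$ (hence $B_c$, hence $A_c$) may be assumed to be a UFD after inverting suitable elements, or one invokes the standard degree/valuation argument from the proof of Theorem 2.50 in \cite{Freudenburg.17}: comparing $t$-adic data is not enough, so instead one works with the structure of $A_c[t]$ and the hypothesis $a, b \ge 2$ to derive a contradiction unless $m = 0$ or $n = 0$; if exactly one of them is $0$, say $n = 0$, then $y \in A$ already and $ux^a \in A_c$ forces $x^a \in A_c$ hence $m = 0$ too. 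The case $am = bn$ with the leading coefficients cancelling is where the real content sits, and it is precisely the content of \cite{Freudenburg.17}, Theorem 2.50, so I would either cite it directly or reproduce its short argument.

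For part (b): now $D^2(ux^a + vy^b) = 0$ means $ux^a + vy^b$ has $t$-degree $\le 1$. If $m, n \ge 1$ then $am, bn \ge 2$, so the terms of degree $\ge 2$ must cancel; if $am \neq bn$ this is immediate (the larger of the two leading terms cannot cancel), and if $am = bn =: N \ge 2$ then again the coefficient of $t^N$ vanishes. The hypothesis $(a,b) \neq (2,2)$ is needed exactly here: it guarantees $N = am = bn$ is large enough that one of $m, n$ exceeds $1$ strictly, or more to the point it blocks the one exceptional configuration $a = b = m = n = 1$-type degeneracy that $D^2 = 0$ would otherwise permit. So one reduces to the same leading-coefficient cancellation as in (a) and concludes $Dx = Dy = 0$ by the same argument. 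I expect the leading-term cancellation step — showing $u x_m^a + v y_n^b = 0$ with $am = bn$ and $a,b \ge 2$ is impossible when $m, n \ge 1$ — to be the crux; everything else is bookkeeping in the polynomial ring $A_c[t]$, and the cleanest route is to quote \cite{Freudenburg.17}, Theorem 2.50, verbatim, since the statement here is exactly that result.
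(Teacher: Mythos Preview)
The paper does not give a proof of this proposition at all: it is stated with the citation \cite{Freudenburg.17}, Theorem 2.50, and then used as a black box. So there is no ``paper's own proof'' to compare against; the honest route here is simply to cite the reference, which is exactly what the paper does.

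Your proposal, on the other hand, is not a proof but a setup followed by a deferral. You correctly pass to $B_c = A_c[t]$ via Lemma~\ref{slice}, correctly identify $\deg_D$ with $\deg_t$, and correctly isolate the crucial case: $am = bn$ with $u\,x_m^{\,a} + v\,y_n^{\,b} = 0$ in $A_c$. But at precisely that point you write that this ``is the content of \cite{Freudenburg.17}, Theorem 2.50, so I would either cite it directly or reproduce its short argument.'' That is circular: the statement you are trying to prove \emph{is} Theorem~2.50. The ``unique factorization considerations'' you gesture at do not work without further input, since $A_c$ is just an arbitrary domain; the equation $u\,x_m^{\,a} = -v\,y_n^{\,b}$ with $am = bn$ is perfectly consistent in a general domain, so leading-coefficient matching alone cannot close the argument. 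The actual proof in \cite{Freudenburg.17} uses a more delicate descent on $\deg_D$ (in the spirit of Mason--Stothers/ABC-type arguments for function fields), not a one-step comparison of top coefficients. Your explanation of why $(a,b)\neq(2,2)$ is needed in part~(b) is also imprecise: the point is not that ``$N$ is large enough,'' but that when $a=b=2$ one genuinely can have $\deg_D(ux^2+vy^2)=1$ with $Dx,Dy\neq 0$ (e.g.\ $u=1$, $v=-1$, $y=x+c$ for $c\in A$), so the hypothesis is sharp and the obstruction is structural, not a matter of degree bounds.

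In short: the paper's approach is to cite the result, and that is the appropriate thing to do here. If you want to supply an actual proof, you must replace the circular citation with the descent argument from \cite{Freudenburg.17}; what you have written identifies the difficulty but does not resolve it.
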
 

\begin{proposition}\label{FMJ13} 
Let $R$ be a $\Z$-graded affine $k$-domain.
Suppose that $f\in R$ is homogeneous, $\deg f\ne 0$ and $n\ge 2$ is an integer relatively prime to $\deg f$. 
Let $B=R[W]/(W^n+f)$ where $R[W]=R^{[1]}$. 
\begin{itemize}
\item [{\bf (a)}] $B$ is a $k$-domain and ${\rm frac}(B)\cong {\rm frac}(R)$. 
\item [{\bf (b)}] If $R$ is a UFD and $f$ is prime in $R$, then $B$ is a UFD. 
\item [{\bf (c)}]  $B$ is rigid if and only if $|f|_R\ge 2$.
\end{itemize}
\end{proposition}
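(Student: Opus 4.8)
The plan is to establish (a) and (b) by the standard commutative algebra of cyclic covers and (c) by combining an elementary construction with the classification of homogeneous locally nilpotent derivations of such covers from \cite{Freudenburg.Moser-Jauslin.13}.

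For \emph{(a)}, first extend the $\Z$-grading of $R$ to $B$ by rescaling the degrees of $R$ by the factor $n$ and declaring $\deg W = \deg f =: d$, so that $W^{n}+f$ is homogeneous and $B$ becomes $\Z$-graded. Let $R_{(h)}$ be the localization of $R$ at the set of all nonzero homogeneous elements; this is a graded field, hence of the form $L[t,t^{-1}]$ with $L=(R_{(h)})_{0}$ a field and $t$ transcendental over $L$, homogeneous of degree $e$ (the positive generator of the group of degrees occurring in $R$). Since $f$ is homogeneous and nonzero, $e\mid d$; write $m=d/e$ and $f=ut^{m}$ with $u\in L^{*}$, and note that $\gcd(n,d)=1$ forces $\gcd(n,m)=1$. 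The $t$-adic valuation on ${\rm frac}(R)=L(t)$ now shows that $-f=-ut^{m}$ is not a $p$-th power in ${\rm frac}(R)$ for any prime $p\mid n$, nor $-4$ times a fourth power when $4\mid n$; by the classical criterion for irreducibility of $X^{n}-c$, the polynomial $W^{n}+f$ is therefore irreducible over ${\rm frac}(R)$. Since $B$ is free over $R$ with basis $1,W,\dots,W^{n-1}$, the natural map $B\to{\rm frac}(R)[W]/(W^{n}+f)$ is an embedding into a field, so $B$ is a domain with ${\rm frac}(B)={\rm frac}(R)[W]/(W^{n}+f)$. To identify this field abstractly, choose $\alpha,\beta\in\Z$ with $\alpha m+\beta n=1$ and set $\tau=W^{\alpha}t^{\beta}\in{\rm frac}(B)$; then $\tau^{n}=(-u)^{\alpha}t$, so $t\in L(\tau)$ and hence $L(t)\subseteq L(\tau)$, while $(W\tau^{-m})^{\alpha}=((-u)^{\beta})^{\alpha}$ forces $W\tau^{-m}(-u)^{-\beta}$ to be a root of unity, which lies in $k\subseteq L$; thus $W\in L(\tau)$ as well. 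Consequently ${\rm frac}(B)=L(t,W)=L(\tau)$, and since $t$ is transcendental over $L$ so is $\tau$, whence ${\rm frac}(B)=L(\tau)\cong L(t)={\rm frac}(R)$.

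For \emph{(b)}, assume $R$ is a UFD and $f$ is prime. Then $B/WB\cong R[W]/(W,W^{n}+f)=R/fR$ is a domain, so $W$ is a prime element of $B$, and by Nagata's criterion for factoriality (valid since $B$ is Noetherian) it suffices to prove that $B_{W}=B[W^{-1}]$ is a UFD. Here $W$, hence $f=-W^{n}$, is a unit, so $B_{W}=R_{f}[W]/(W^{n}+f)$, a degree-$n$ cyclic (and, in characteristic zero, \'etale) cover of the UFD $R_{f}$. Tensoring instead with $R_{(h)}$ and applying the same substitution $\tau=W^{\alpha}t^{\beta}$ as in (a) shows $B\otimes_{R}R_{(h)}=R_{(h)}[W]/(W^{n}+f)\cong L[\tau,\tau^{-1}]$, a Laurent polynomial ring over the field $L$; combined with the criterion for factoriality of $\Z$-graded Krull domains this yields that $B_{W}$ is a UFD (see \cite{Freudenburg.Moser-Jauslin.13}).

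For \emph{(c)}, since $B$ is an affine $\Z$-graded domain, the lemma on extreme homogeneous components of a locally nilpotent derivation (see \cite{Freudenburg.17}) shows that $B$ is rigid if and only if $B$ admits no nonzero \emph{homogeneous} locally nilpotent derivation; and, $f$ being homogeneous, $|f|_{R}\ge 2$ holds if and only if there is no nonzero homogeneous $D\in\lnd(R)$ with $D^{2}f=0$. So it suffices to show: $B$ has a nonzero homogeneous locally nilpotent derivation if and only if $R$ has a nonzero homogeneous $D\in\lnd(R)$ with $D^{2}f=0$. For the implication $(\Leftarrow)$, let $D$ be such a derivation. If $Df=0$, extend $D$ to $R[W]=R^{[1]}$ by $W\mapsto 0$; this is homogeneous, locally nilpotent and kills $W^{n}+f$, so it descends to a nonzero homogeneous element of $\lnd(B)$. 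If instead $a:=Df\ne 0$, then $f$ is a local slice for $D$, so by Lemma \ref{slice} $R_{a}=(\krn D)_{a}[f]\cong(\krn D)_{a}^{[1]}$; substituting $f=-W^{n}$ gives $B_{a}=(\krn D)_{a}[W]\cong(\krn D)_{a}^{[1]}$, and the derivation $d/dW$ of $B_{a}$ is homogeneous and locally nilpotent with kernel containing $(\krn D)_{a}\ni a$. Since $B$ is finitely generated, $a^{N}\cdot d/dW$ maps $B$ into $B$ for $N\gg 0$, and its restriction to $B$ is a nonzero homogeneous element of $\lnd(B)$. For the converse $(\Rightarrow)$ one invokes the classification of homogeneous locally nilpotent derivations of $R[W]/(W^{n}-r)$ from \cite{Freudenburg.Moser-Jauslin.13}: any nonzero such derivation of $B$ (with $r=-f$) arises from a nonzero homogeneous $D\in\lnd(R)$ with $D^{2}r=0$, essentially via the two constructions above, so its existence produces the required $D$.

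The main obstacle is the direction $(\Rightarrow)$ of (c), namely the classification theorem of \cite{Freudenburg.Moser-Jauslin.13}, and to a lesser extent the verification in (b) that $B_{W}$ is a UFD; the remaining ingredients — the graded change of variable, the irreducibility criterion for $X^{n}-c$, Nagata's criterion, and the construction in $(\Leftarrow)$ via Lemma \ref{slice} — are elementary and self-contained.
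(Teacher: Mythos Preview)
The paper's own ``proof'' of this proposition is a two-line citation: parts (a) and (b) are attributed to \cite{Daigle.Freudenburg.Nagamine.22}, Theorem~3.8, and part (c) to \cite{Freudenburg.Moser-Jauslin.13}, Corollary~3.2. Your proposal is therefore not competing against an argument but against a reference, and in fact you supply considerably more detail than the paper does. For (a) you give a genuinely self-contained proof via the graded field $L[t,t^{-1}]$ and the change of variable $\tau=W^{\alpha}t^{\beta}$, which is correct and is essentially the mechanism underlying the cited theorem (incidentally, the computation $W\tau^{-m}=(-u)^{\beta}$ holds on the nose, so the detour through roots of unity is unnecessary). For (c), your construction of a nonzero homogeneous LND on $B$ from one on $R$ with $D^{2}f=0$ is exactly the argument behind the cited corollary, and you correctly defer the converse direction to \cite{Freudenburg.Moser-Jauslin.13}, just as the paper does.

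The one place where your sketch is thinner than a citation is (b): after the Nagata reduction to $B_{W}$ you compute the further localization $B\otimes_{R}R_{(h)}\cong L[\tau,\tau^{-1}]$ and then invoke an unspecified ``criterion for factoriality of $\Z$-graded Krull domains'' to conclude that $B_{W}$ itself is a UFD. There is a genuine logical gap here, since a localization being factorial does not by itself force the original ring to be; one needs, for instance, the graded version of Nagata's criterion (every homogeneous height-one prime is principal) together with an argument that the homogeneous primes of $B_{W}$ are controlled by those of $R_{f}$. You yourself flag this as a secondary obstacle, and the paper sidesteps it entirely by citing \cite{Daigle.Freudenburg.Nagamine.22}; so your treatment is not wrong, just not self-contained at this point, and it cites a different source than the paper does.
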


Parts (a) and (b) of this result follow from \cite{Daigle.Freudenburg.Nagamine.22}, Theorem 3.8. Part (c) follows from \cite{Freudenburg.Moser-Jauslin.13}, Corollary 3.2. 

\paragraph{\bf The polynomial ring $k^{[3]}$.}

We give some results about locally nilpotent derivations of the polynomial ring $k^{[3]}$.

\begin{theorem}\label{Miyanishi}
Let $B=k^{[3]}$. If $D\in {\rm LND}(B)$ and $D\ne 0$ then $\krn D \cong k^{[2]}$. 
\end{theorem}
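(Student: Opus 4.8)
Theorem \ref{Miyanishi} is a classical result of Miyanishi, so in practice one would just cite it. But sketching an honest proof:

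The plan is to reduce, via a standard argument, to understanding the structure of the kernel $A=\krn D$ as a subring of $B=k^{[3]}$. First I would record the easy formal facts: $A$ is factorially closed in $B$ (noted already in the preliminaries), hence $A$ is itself a UFD and algebraically closed in $\Frac(B)$; moreover $\trdeg_k A = 2$ since $D\neq 0$ kills exactly one dimension. Next I would invoke Lemma \ref{slice}: choosing a local slice $f$ with $a=Df\in A\setminus\{0\}$, we get $B_a = A_a[f]\cong A_a^{[1]}$. In particular $\Frac(B) = \Frac(A)^{[1]}$, so $\Frac(A)$ is a rational field of transcendence degree $2$ over $k$, i.e.\ $\Frac(A)\cong k(t_1,t_2)$. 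The goal is then to upgrade "$A$ is a two-dimensional affine UFD with rational field of fractions, algebraically closed in its fraction field" to "$A\cong k^{[2]}$".

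For that upgrade I would argue geometrically. Set $X=\Spec A$, an affine normal (indeed factorial) rational surface over $k$. The $\G_a$-action on $\Spec B = \A^3$ induced by $D$ makes $\A^3 \to X$ into something like a "geometric quotient on a big open set": by Lemma \ref{slice} the morphism $\pi:\A^3\to X$ is, after inverting $a$, a trivial $\A^1$-bundle, so $\pi$ is surjective with generic fibre $\A^1$. The key input is then a smoothness/regularity statement: one shows $X$ is smooth. This can be done by a local analysis — $\A^3$ is smooth, $\pi$ is flat with reduced fibres over a large open set, and one rules out singular points using that $A$ is factorially closed (a singular point would force a non-factorial local ring, contradicting that $A$ is a UFD, after a more careful argument controlling the fibres). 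Once $X$ is a smooth affine factorial rational surface with $\mathcal{O}(X)^* = k^*$ (units of $A$ are units of $B = k^{[3]}$, which are just $k^*$), one concludes $X\cong \A^2$ by the characterization of the affine plane. The cleanest route is: a smooth affine rational surface with trivial units and trivial Picard group (trivial because $A$ is a UFD) and with no nonconstant invertible functions is isomorphic to $\A^2$ — this is essentially the Miyanishi–Sugie / Fujita line of results, or one can cite Miyanishi's theorem directly.

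The main obstacle is precisely the smoothness of $X=\Spec A$: the formal consequences (UFD, rational, algebraically closed in its fraction field, trivial units) are cheap, but none of them alone forces smoothness, and a two-dimensional UFD can have a rational singularity. The real content is a careful study of the quotient morphism $\pi:\A^3\to X$ and the fibration it induces — showing that the generic $\A^1$-fibre structure, together with normality of $X$, propagates to give regularity everywhere — together with an application of the Abhyankar–Moh–Suzuki-type or Miyanishi-type surface recognition theorem at the end. In a paper like this one would not reprove all of that; I would simply state Theorem \ref{Miyanishi} with a reference to Miyanishi's work (e.g.\ via \cite{Freudenburg.17}), since the subsequent results only use the conclusion $\krn D\cong k^{[2]}$ as a black box.
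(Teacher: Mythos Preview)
Your instinct is exactly right and matches what the paper does: the paper gives no proof of Theorem~\ref{Miyanishi} at all, but simply cites Miyanishi's original paper \cite{Miyanishi.85} for the case $k=\C$, and then notes that the general statement (arbitrary algebraically closed $k$ of characteristic zero) is deduced using a theorem of Kambayashi, referring to \cite{Daigle.Kaliman.09}. So your final sentence---just state the theorem and cite the literature---is precisely the paper's approach.

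Your sketch of the underlying argument is a fair outline of Miyanishi's strategy (factorially closed kernel $\Rightarrow$ UFD with trivial units, rational, two-dimensional; then regularity plus a characterization of $\A^2$). Two small remarks on what the paper adds beyond your sketch: first, the paper is explicit that Miyanishi's original proof was for $k=\C$, and that passing to a general field of characteristic zero is a separate step handled via Kambayashi's descent theorem; your sketch does not isolate this issue. Second, the smoothness step you flag as ``the main obstacle'' is indeed the heart of the matter, and your parenthetical suggestion (``a singular point would force a non-factorial local ring'') is not quite a proof---factorial local rings can be singular in dimension~$2$. The actual argument is more delicate, going through the structure of the quotient morphism; but since neither you nor the paper intends to reprove this, the point is moot here.
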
 

Miyanishi \cite{Miyanishi.85} showed this result in the case $k=\C$; the general statement is deduced using a theorem of Kambayashi; see \cite{Daigle.Kaliman.09}. 

\begin{proposition}\label{plinth} 
Let $B=k^{[3]}$, $D\in {\rm LND}(B) \setminus \{0\}$ and $A=\krn D$. 
\begin{itemize}
\item [{\bf (a)}] The plinth ideal $A\cap DB$ for $D$ is a principal ideal of $A$.
\item [{\bf (b)}] A local slice $f$ for $D$ is minimal if and only if $Df$ generates the plinth ideal of $D$. 
\item [{\bf (c)}] There exists a minimal local slice of $D$.
\item [{\bf (d)}] If $r$ is a minimal local slice of $D$ then $\setspec{ \alpha r + \beta }{ \alpha,\beta \in A \text{ and } \alpha \neq 0}$ is the set of all local slices of $D$.
\end{itemize}
\end{proposition}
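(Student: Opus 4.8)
The plan is to derive parts (b), (c), (d) from part (a), and to prove (a) itself by reducing it to a divisoriality statement about ideals of the factorial ring $A$. Write $\pgoth = A\cap DB$ for the plinth ideal. By Theorem~\ref{Miyanishi}, $A=\krn D\cong k^{[2]}$, so $A$ is a Noetherian UFD with $A^{*}=k^{*}$; since $\krn D$ is factorially closed in $B$, the set of nonzero elements of $\pgoth$ is exactly $\{Df: f\text{ a local slice of }D\}$, and this set is closed under addition and under multiplication by elements of $A$.

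First I would set up a tool. Let $K=\Frac(A)$. Then $D$ extends to a nonzero $\widetilde D\in\lnd(K\otimes_{A}B)$ with $\krn\widetilde D=K$, and since $K$ is a field $\widetilde D$ has a slice; so Lemma~\ref{slice} applied over $K$ gives $K\otimes_{A}B=K[s]\cong K^{[1]}$ for some slice $s$. Using in addition that $B\cap K=A$ (again factorial closedness), one deduces: (i) every local slice $f$ of $D$ satisfies $Df\in A$ and $f=(Df)\,s+\theta_{f}$ with $\theta_{f}\in K$; and (ii) for local slices $f,g$ the element $g$ is transcendental over $A$, and from the properties of $\deg_{D}$ one gets $A[f]\subseteq A[g]\iff Df\in A\!\cdot\! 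Dg$, in which case $f=\alpha g+\beta$ with $\alpha\in A\setminus\{0\}$, $\beta\in A$.

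Granting (a), parts (b)--(d) follow quickly. The subrings $A[f]$, $f$ a local slice, satisfy the ascending chain condition: by (ii) an ascending chain of them gives an ascending chain of principal ideals $(Df)$ of the Noetherian ring $A$, which stabilizes and forces the chain of subrings to stabilize; hence a maximal $A[r]$ exists, i.e. a minimal local slice exists — this is (c), and it uses only Noetherianity, not (a). If $Df$ generates $\pgoth$ then $f$ is minimal: for a local slice $g$ with $A[f]\subseteq A[g]$, (ii) gives $Df=\alpha\, Dg$ with $Dg\in\pgoth=A\!\cdot\! Df$, so $\alpha\in A^{*}$ and $A[f]=A[g]$; this is one half of (b). For the converse and for (d), use (a) to write $\pgoth=(a^{*})$ and pick a local slice $r^{*}$ with $Dr^{*}=a^{*}$ (minimal by the previous sentence); every local slice $g$ has $Dg\in\pgoth=A\!\cdot\! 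Dr^{*}$, hence $g=\alpha r^{*}+\beta\in A[r^{*}]$ by (ii), giving (d) for $r^{*}$; comparing an arbitrary minimal $r$ with $r^{*}$ via (ii) shows $A[r]=A[r^{*}]$, whence $Dr$ generates $\pgoth$ and $r$ also satisfies the conclusion of (d).

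It remains to prove (a), the heart of the matter. Since $A$ is Noetherian, $\pgoth=(Df_{1},\dots,Df_{n})$ for local slices $f_{i}$; put $d=\gcd(Df_{1},\dots,Df_{n})\in A$, so $\pgoth\subseteq(d)$ and it suffices to show $d\in\pgoth$. By induction this reduces to: given local slices $f,g$ with $Df=db_{1}$, $Dg=db_{2}$ and $\gcd(b_{1},b_{2})=1$, show $d\in\pgoth$. One has $D(b_{2}f-b_{1}g)=0$, so $b_{2}f-b_{1}g\in A$; and if $(b_{1},b_{2})=A$, say $1=\beta_{1}b_{1}+\beta_{2}b_{2}$, then $D(\beta_{1}f+\beta_{2}g)=d$, so $d\in\pgoth$. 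The main obstacle is that in $A\cong k^{[2]}$ coprimality does not imply comaximality; indeed the statement is false for a general factorial $B$ in place of $k^{[3]}$ — for instance, left translation by a one-parameter unipotent subgroup gives a nonzero locally nilpotent derivation of $\mathcal O(\SL_{2})$ whose kernel is $\cong k^{[2]}$ but whose plinth ideal is a non-principal maximal ideal. So the argument must use $B=k^{[3]}$. The way I would finish: the inclusion $A/\pgoth\hookrightarrow C:=B/DB$ together with Lemma~\ref{slice} shows that the $A$-module $C$ is supported exactly on $V(\pgoth)$, and that if a maximal ideal $\mathfrak m$ were an associated prime of $A/\pgoth$ then $\mathfrak m$ would also be associated to $C$; hence it suffices to show $C$ has no maximal associated prime — then $\pgoth$ is unmixed of height one, hence divisorial, hence principal since $A$ is factorial. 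This last step is where the special geometry of $\A^{3}$ enters: the quotient morphism $\A^{3}\to\Spec A$ is surjective with one‑dimensional fibres, so $B$ is a flat $A$-module and the cokernel $C$ cannot acquire embedded components; I expect this divisoriality step to be the only genuinely difficult point, and if a short self-contained proof is not at hand I would cite the known statement (Daigle; see also Freudenburg's book).
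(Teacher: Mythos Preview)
Your derivation of (b), (c), (d) from (a) is correct and in fact spells out what the paper leaves implicit: the paper's entire proof reads ``Part (a) is Theorem~1 of \cite{Daigle.Kaliman.09}, part (b) is an exercise, and parts (c) and (d) follow from (a) and (b).'' Your equivalence $A[f]\subseteq A[g]\iff Df\in A\cdot Dg$ (with $f=\alpha g+\beta$) is exactly the exercise behind (b), and your chain argument for (c), (d) is the intended one. The $\SL_2$ counterexample is apt and correctly isolates why (a) is special to $k^{[3]}$.

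For (a), however, your self-contained sketch does not close. Flatness of $B$ over $A$ can indeed be obtained (miracle flatness: $B$ Cohen--Macaulay, $A$ regular, all fibres of $\Spec B\to\Spec A$ one-dimensional), but the inference ``$B$ flat over $A$ $\Rightarrow$ $C=B/DB$ has no embedded (maximal) associated primes'' is not justified, and I do not see how to extract it from the short exact sequences you have available; flatness of $B$ controls $\mathrm{Tor}$ against $B$, not the associated primes of a quotient of $B$ by an $A$-submodule. The actual proof in \cite{Daigle.Kaliman.09} goes through a different mechanism (a cancellation/variable argument combined with the structure of $\G_a$-quotients of $\A^3$), not an associated-prime computation on $B/DB$. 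Since you yourself flag this step as the genuinely hard one and propose citing the literature, your proposal ultimately lands in the same place as the paper: cite Daigle--Kaliman for (a), and treat (b)--(d) as routine consequences.
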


\begin{proof}
Part (a) is Theorem 1 of \cite{Daigle.Kaliman.09}, part (b) is an exercise, and parts (c) and (d) follow from (a) and (b).
\end{proof}

\begin{proposition}\label{jacobian} {\rm (See \cite{Daigle.97}.)} Let $B=k^{[3]}$, let $D\in {\rm LND}(B)$ be irreducible, and let $f,g\in B$ be such that $\krn D=k[f,g]$. Up to multiplication by an element of $k^*$, $D$ is given by the jacobian determinant
\[
D = \left| \frac{\partial(f,g, \, \underline{\ \ } \, )}{\partial(x,y,z)} \right| \, .
\]
\end{proposition}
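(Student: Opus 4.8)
The plan is to reduce to a model case using Theorem \ref{Miyanishi} and the structure theory of triangulable or rank-two situations, then verify that the jacobian derivation has the required kernel and is irreducible, and finally invoke uniqueness. First I would recall that by Theorem \ref{Miyanishi} the kernel $A = \krn D$ is a polynomial ring $k^{[2]}$, so it can be written as $A = k[f,g]$ for some $f,g \in B$ forming a coordinate system on $A$; any two such pairs differ by an automorphism of $k^{[2]}$, which has jacobian in $k^*$, so the asserted formula is independent (up to $k^*$) of the chosen generators of $A$. Next, set $\Delta = \left| \frac{\partial(f,g,\,\underline{\ \ }\,)}{\partial(x,y,z)} \right|$, the jacobian derivation sending $h \mapsto \det \mathrm{Jac}(f,g,h)$. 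A direct computation shows $\Delta f = \Delta g = 0$, so $k[f,g] \subseteq \krn \Delta$; since $\Delta \ne 0$ (because $f,g$ are algebraically independent) and $\krn \Delta$ has transcendence degree $2$, while $k[f,g]$ is factorially closed in $B = k^{[3]}$ (being the kernel of the locally nilpotent $D$), one gets $\krn \Delta = k[f,g] = \krn D$.

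The heart of the argument is then to show that two locally nilpotent derivations of $B = k^{[3]}$ with the same kernel $A$ are proportional over $\Frac(A)$, and in fact over $k^*$ once both are irreducible. For the proportionality over $\Frac(A)$: pick a local slice $s$ for $D$ with $a = Ds \in A$; by Lemma \ref{slice}, $B_a = A_a[s] \cong A_a^{[1]}$, and on a polynomial ring in one variable over $A_a$ every locally nilpotent $A_a$-derivation is an $A_a$-multiple of $\partial/\partial s$. Applying this to $\Delta$ extended to $B_a$ (it still kills $A$, hence is $A_a$-linear after localizing, and is still locally nilpotent), we get $\Delta = \lambda D$ on $B_a$ for some $\lambda \in \Frac(A)$; since both $\Delta$ and $D$ map $B$ into $B$, clearing denominators shows $q\Delta = p D$ in $B$ for coprime $p,q \in A$. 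Now invoke irreducibility: $D(B)$ is not contained in any proper principal ideal, so from $pD = q\Delta$ with values in $B$ one forces $q \in B^* = k^*$, and symmetrically — using that $\Delta$ should also be checked irreducible, which follows because $\Delta(B)$ generates the unit ideal (e.g. $\Delta$ of a suitable monomial or the local slice computation shows the plinth-type data has trivial content) — one forces $p \in k^*$ as well, giving $D = c\Delta$ with $c \in k^*$.

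The main obstacle I anticipate is the clean justification that $\Delta$ is irreducible (equivalently, that the content of the jacobian derivation is trivial), and more precisely the step going from "$q\Delta = pD$ in $B$ with $p,q$ coprime in $A$" to "$p,q \in k^*$": one must rule out the possibility that $D$ itself is reducible with content absorbed on one side. This is exactly where the hypothesis that $D$ is irreducible is used, together with the fact that the jacobian derivation attached to a coordinate-generating pair $(f,g)$ of a $k^{[2]}$-kernel is automatically irreducible — a fact that, over $k=\C$, is in \cite{Daigle.97}, and whose general-field version is what the cited reference supplies. I would present this last point by a gcd/content argument in the UFD $B = k^{[3]}$: write $D = \mathrm{cont}(D)\cdot D'$ with $D'$ irreducible and $\mathrm{cont}(D) \in B$; since $\krn D' = \krn D = k[f,g]$, comparing with $\Delta$ via the localization identity shows $\mathrm{cont}(D)$ divides a unit, so $D$ was already irreducible-proportional to $\Delta$, completing the proof.
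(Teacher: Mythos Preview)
The paper does not give a proof of this proposition: it is stated with the attribution ``(See \cite{Daigle.97})'' and then used as a black box. There is therefore no argument in the paper to compare your proposal against.

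On the merits of your sketch: the overall strategy is the standard one, and most of it works. Passing to $B_a = A_a[s]$ via a local slice and observing that every $A_a$-derivation of $A_a[s]\cong A_a^{[1]}$ is a $B_a$-multiple of $\partial/\partial s$ yields $a\,\Delta = (\Delta s)\,D$ on $B$; then irreducibility of $D$ forces the proportionality factor $\lambda = (\Delta s)/a$ to lie in $B$ (your ``$q \in k^*$'' step is correct). Two places need tightening. First, your parenthetical claim that $\Delta$ ``is still locally nilpotent'' is unjustified at that stage and also unnecessary: all you need there is that $\Delta$ is an $A_a$-derivation of $A_a^{[1]}$, which is immediate from $\ker\Delta \supseteq A$. (In fact, once you know $\Delta = \lambda D$ with $\lambda \in B$, local nilpotency of $\Delta$ is \emph{equivalent} to $\lambda \in A$, which you have not yet established.) Second --- and this is the real issue --- the step you correctly flag as ``the main obstacle'', namely $\lambda \in k^*$ (equivalently, irreducibility of $\Delta$), is exactly the nontrivial content of \cite{Daigle.97}. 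Your proposed justifications for it are either too vague (``$\Delta(B)$ generates the unit ideal'', ``the plinth-type data has trivial content'') or circular: the final content argument writes $D = \mathrm{cont}(D)\cdot D'$ and uses irreducibility of $D$, but that tells you nothing about irreducibility of $\Delta$. So as written, your argument reduces the proposition to a fact that you then cite from \cite{Daigle.97} --- which is fine, but is not materially different from what the paper does.
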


\begin{proposition}\label {Daigle00result19}
Let $a,b,c$ be positive integers
and let $B = k[x,y,z] = k^{[3]}$ be endowed with the $\N$-grading in which $x,y,z$ are homogeneous of degrees $a,b,c$.
Let $D\in {\rm LND}(B)$ be irreducible and homogeneous. 
\begin{itemize}
\item [{\bf (a)}] There exist homogeneous $F,G\in B$ with $\krn D=k[F,G]$. 
\item [{\bf (b)}] If $d=\deg F$ and $e=\deg G$ then $\deg (D)=(d+e)-(a+b+c)$. 
\item [{\bf (c)}] If $a,b,c$ are pairwise relatively prime then $\gcd (d,e)=1$. 
\end{itemize}
\end{proposition}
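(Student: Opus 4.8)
The plan is to dispatch (a) and (b) quickly from the quoted results and to concentrate the effort on (c).

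For (a): since $D$ is homogeneous, $A := \krn D$ is a graded subalgebra of $B$, and $A_0 \subseteq B_0 = k$ forces $A_0 = k$, so $A$ is a finitely generated positively graded $k$-algebra. By the graded Nakayama lemma, a minimal homogeneous $k$-algebra generating set of $A$ has cardinality $\dim_k(A_+/A_+^2)$, where $A_+ = \bigoplus_{i>0}A_i$. By Theorem \ref{Miyanishi}, $A \cong k^{[2]}$; under any such isomorphism $A_+$ corresponds to a maximal ideal $\mathfrak m$ of $k^{[2]}$ with $k^{[2]}/\mathfrak m = k$, so $\dim_k(A_+/A_+^2) = \dim_k(\mathfrak m/\mathfrak m^2) = 2$. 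Hence $A = k[F,G]$ for some homogeneous $F,G$, which are algebraically independent since $\trdeg_k A = 2$. For (b): $D$ is nonzero because it is irreducible, so by (a) and Proposition \ref{jacobian} we may, after multiplying $D$ by a suitable element of $k^*$ (which does not change $\deg D$), assume $D(h) = \big|\partial(F,G,h)/\partial(x,y,z)\big|$. Each term of this determinant is, up to sign, a product of a first partial of $F$, one of $G$, and one of $h$; the three partial-differentiations lower degree by $a$, $b$, $c$ in some order, so when $h\in B_j$ the term is homogeneous of degree $(d+e+j)-(a+b+c)$. Thus $D$ raises degree by exactly $(d+e)-(a+b+c)$, which is therefore $\deg D$.

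For (c), suppose toward a contradiction that $m := \gcd(d,e) \ge 2$. Let the group $\mu_m$ of $m$-th roots of unity in $k$ act on $B$ by $\zeta\cdot x = \zeta^a x$, $\zeta\cdot y = \zeta^b y$, $\zeta\cdot z = \zeta^c z$; since $m \mid d$ and $m \mid e$, this action fixes $F$ and $G$, hence fixes $A = \krn D$ pointwise. The quotient morphism $\pi : \Spec B \to \Spec A$ is then $\mu_m$-equivariant for the trivial action on $\Spec A \cong \A^2$, so $\mu_m$ acts on every fibre of $\pi$; and by Lemma \ref{slice}, applied with a local slice $r$ of $D$ and $p := D(r)$, we have $B_p = A_p[r]$, whence the generic fibre of $\pi$ is $\Spec K[r] \cong \A^1_K$ with $K := \Frac(A)$. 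Now consider the induced $\mu_m$-action on this generic fibre $\Spec K[r]$. If it is trivial, then $\mu_m$ acts trivially on $\Frac(B) = K(r)$, hence on $B$, forcing $m \mid \gcd(a,b,c) = 1$, a contradiction. If it is nontrivial, then (a standard fact in characteristic zero) it is conjugate, by a $K$-automorphism of $\A^1_K$, to a linear action $r \mapsto \chi(\zeta)r$ with $\chi$ a nontrivial character of $\mu_m$, whose fixed locus on $\A^1_K$ is the single $K$-point $r = 0$; consequently the fixed-point set $(\Spec B)^{\mu_m}$ dominates $\Spec A$ under $\pi$, and so has dimension $\ge 2$. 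But $(\Spec B)^{\mu_m}$ is the coordinate subspace of $\A^3$ obtained by setting to zero those among $x,y,z$ whose degree is not divisible by $m$; its having dimension $\ge 2$ means at least two of $a,b,c$ are divisible by $m$, which is impossible since $a,b,c$ are pairwise coprime and $m \ge 2$. In every case we reach a contradiction, so $\gcd(d,e)=1$.

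The crux is part (c); parts (a) and (b) are essentially formal once Theorem \ref{Miyanishi} and Proposition \ref{jacobian} are available. Within (c), the points demanding the most care are the identification of the generic fibre of $\pi$ with $\A^1_K$ and the transport of the $\mu_m$-action onto it (this is where Lemma \ref{slice} is used), the linearization of a finite-order action on $\A^1_K$ in characteristic zero — together with the observation that a nontrivial such action has exactly one fixed point — and the elementary but essential computation of the fixed locus of the weighted $\mu_m$-action on $\A^3$. A smaller point to verify is that $\Spec A \cong \A^2$ carries the trivial $\mu_m$-action, which holds because the $\mu_m$-weights $d, e$ on $A$ are divisible by $m$.
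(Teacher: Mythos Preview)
Your proof is correct, and for parts (a) and (c) it takes a genuinely different route from the paper. The paper proves (a) by starting from Miyanishi's theorem to get $\ker D = k[f,g]$, decomposing $f$ and $g$ into their homogeneous summands (all of which lie in $\ker D$ since $D$ is homogeneous), and then invoking an external result (\cite{Daigle.00}, 1.6) to extract two homogeneous generators from this finite list; your graded-Nakayama argument, using that $A_+$ corresponds to a smooth point of $\A^2$, is more self-contained and arguably cleaner. Part (b) is treated the same way in both. For (c) the paper simply cites \cite{Daigle.00}, result 1.9, whereas you supply a direct geometric proof via the weighted $\mu_m$-action on $\A^3$: the key steps---that any finite-order $K$-automorphism of $\A^1_K$ in characteristic zero is affine with linear part a root of unity and hence linearizable with a unique fixed $K$-point, and that a nonempty fixed locus in the generic fibre forces $(\Spec B)^{\mu_m}$ to dominate $\Spec A$ (so $\dim(\Spec B)^{\mu_m}\ge 2$)---are all sound, and the final contradiction with pairwise coprimality is immediate. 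Your approach has the advantage of being entirely internal to the paper; the paper's approach has the advantage of brevity, deferring the work to the cited reference.
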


\begin{proof} Part (a).\footnote{This result was first shown by Zurkowski, independent of Miyanishi's theorem, in the preprint \cite{Zurkowski.ppt2}. 
Since this paper was not published and may not be widely available, we include a short proof based on Miyanishi's theorem.}
By {\it Theorem\,\ref{Miyanishi}} there exist $f,g\in B$ such that $\krn D=k[f,g]$. 
Let $f=\sum_{i=m}^nf_i$ and $g=\sum_{j=p}^qg_j$ be their respective decompositions into homogeneous summands. Since $D$ is homogeneous, $Df_i=0$ and $Dg_j=0$ for each $i,j$, and we see that 
$k[f,g]=k[f_m,\hdots ,f_n,g_p,\hdots ,g_q]$. By \cite{Daigle.00}, 1.6 (or see \cite{Freudenburg.17}, Proposition 3.42 for details),
there exist $F,G\in\{ f_m,\hdots ,f_n,g_p,\hdots ,g_q\}$ such that $\krn D=k[F,G]$. 

Part (b). This follows from the jacobian formulation for $D$ given in {\it Proposition\,\ref{jacobian}}. 

Part (c). This is \cite{Daigle.00}, result 1.9. 
\end{proof}

\begin{lemma} \label {723476ed7dh0927gb}
Let $a,b,c$ be pairwise relatively prime positive integers and
let $R = k[x,y,z] = k^{[3]}$ be endowed with the $\N$-grading such that $x,y,z$ are homogeneous of degrees $a,b,c$.
Let $s$ be a nonzero homogeneous element of $R$ satisfying $\delta s\ne 0$ for all nonzero $\delta\in {\rm LND}(R)$. 
Assume that $D\in\lnd(R)$ is irreducible and homogeneous and $D^2(s)=0$. Let $\krn D=k[u,v]$ for homogeneous $u,v$. 
\begin{itemize}
\item [{\bf (a)}]  $Ds\notin k[u]$ and $Ds\notin k[v]$. 
\item [{\bf  (b)}]  If $\deg (s)\ne a+b+c$ then $\min (\deg (u),\deg (v)) \le \max\{ 2, \deg(s)-(a+b+c)\}$.
\end{itemize}
\end{lemma}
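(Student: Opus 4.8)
The plan is to work with the jacobian description of $D$ from \emph{Proposition \ref{jacobian}} and extract numerical constraints from the hypothesis $D^2(s)=0$. Write $\krn D = k[u,v]$ with $u,v$ homogeneous of degrees $d = \deg u$ and $e = \deg v$, and set $\Sigma = a+b+c$. By \emph{Proposition \ref{Daigle00result19}}(b), $\deg D = (d+e) - \Sigma$, and by part (c) of that proposition $\gcd(d,e)=1$; in particular $d \neq e$ unless $\{d,e\}=\{1,1\}$ (which is handled trivially). Since $s$ is not killed by any nonzero LND, $Ds \neq 0$, so $s$ is a local slice for $D$ and $Ds \in \krn D = k[u,v]$ is a nonzero homogeneous element.

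For part (a), I would argue by contradiction. Suppose $Ds \in k[u]$, so $Ds = \lambda u^m$ for some $\lambda \in k^*$ and $m \geq 1$ (homogeneity forces a monomial). Then $m\,d = \deg(Ds) = \deg(s) + \deg(D) = \deg(s) + (d+e) - \Sigma$. The goal is to derive a contradiction with the hypothesis that $s$ is rigid-testing, i.e.\ that no nonzero LND kills $s$ — the natural candidate to contradict is the LND whose kernel contains $u$. More precisely, consider that $u$ is a variable of a slice-localization, or construct a new homogeneous LND $D'$ with $u \in \krn D'$ and $D's = 0$: since $Ds = \lambda u^m$ and $u \in \krn D$, one can look at the derivation $D'' = u^{m-1} D$ (or an irreducible multiple thereof), which is still locally nilpotent, still homogeneous, and satisfies $D''s = \lambda u^{2m-1} \neq 0$ — so that does not immediately work. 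Instead I expect the right move is: from $D^2 s = 0$ and $Ds = \lambda u^m$ with $m \geq 1$, if $m \geq 2$ then \emph{Proposition \ref{AB-theorem}}(b) applied to $D$ and the element $s\cdot(\text{something})$, or directly the structure $B_{Ds} = A_{Ds}[s]$ from \emph{Lemma \ref{slice}}, forces a factorization that contradicts rigidity; and if $m=1$, then $Ds = \lambda u$, so $u \in DB$, meaning $u$ generates (a power of) the plinth ideal, and one shows the LND $D_v$ of $k[u,v]^{[1]}$-type with kernel $k[u, s']$ descends. The symmetric argument handles $Ds \in k[v]$.

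For part (b), assume $\deg(s) \neq \Sigma$, equivalently $\deg(s) - \Sigma \neq 0$. By part (a), $Ds$ is a homogeneous element of $k[u,v]$ involving both $u$ and $v$; write $Ds = \sum_{i,j} c_{ij} u^i v^j$ where the sum runs over $(i,j)$ with $id + je = \deg(Ds) = \deg(s) + (d+e) - \Sigma$ and at least two terms have $c_{ij} \neq 0$, one with $i \geq 1$ and one with $j \geq 1$ (using part (a)). Pick such $(i_1,j_1)$ and $(i_2,j_2)$ with $i_1 \geq 1 \geq $ nothing forced and $j_2 \geq 1$; subtracting, $(i_1 - i_2) d = (j_2 - j_1) e$, and by $\gcd(d,e)=1$ this forces $d \mid (j_2 - j_1)$ and $e \mid (i_1 - i_2)$. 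Combining with the fact that all exponents are bounded by $\deg(Ds)/\min(d,e)$ and that $id + je$ equals a fixed value, one gets $\min(d,e) \leq \deg(Ds) - \max(d,e) \cdot 0$; more carefully, from a term with $i \geq 1$ we get $d \leq \deg(Ds)$ and from a term with $j \geq 1$ we get $e \leq \deg(Ds)$, but that only bounds the larger of the two. The sharper bound comes from picking the term of smallest total degree in the minority variable: WLOG $d = \min(d,e)$; a term $u^i v^j$ with $j \geq 1$ satisfies $id + je = \deg(Ds)$, so $e \leq je \leq \deg(Ds) = \deg(s) - \Sigma + d + e$, giving no info; whereas a term $u^i v^j$ with $i \geq 1$ gives $d \leq \deg(Ds) - je \leq \deg(Ds) - e$ when $j \geq 1$, i.e.\ $d + e \leq \deg(Ds) = \deg(s) - \Sigma + d + e$ — still circular. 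I expect the actual argument instead bounds $\min(d,e)$ by noting that $Ds \neq \lambda u^i v^j$ is a \emph{single} monomial is impossible by part (a) only in the mixed sense, so there exist two distinct monomials, hence $|i_1 d - i_2 d| = |j_1 e - j_2 e| > 0$, forcing $d \leq |j_2 - j_1| e$ and thus... — the clean conclusion is that $\min(d,e)$ divides $\deg(Ds) = \deg(s) - \Sigma + d + e$, hence $\min(d,e) \mid (\deg(s) - \Sigma)$ after reduction, and combined with the case $\deg(s) - \Sigma < 0$ handled separately via positivity, one obtains $\min(d,e) \leq \max(2, \deg(s) - \Sigma)$.

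\textbf{Main obstacle.} The delicate point is part (a): ruling out $Ds \in k[u]$ genuinely uses that $s$ is rigid-testing (not merely that $s$ is a local slice), and the mechanism by which $Ds = \lambda u^m$ would let one manufacture a nonzero LND killing $s$ — presumably via the formula $B_{Ds} = (\krn D)_{Ds}[s]$ of \emph{Lemma \ref{slice}}, combined with the fact that a localization of a polynomial LND kernel is still amenable to building triangular derivations, then clearing denominators using homogeneity and \emph{Theorem \ref{Miyanishi}}. Getting this descent to work cleanly, while keeping everything homogeneous, is where the real content lies; parts (b)'s numerical deduction should then be essentially bookkeeping with the graded jacobian formula and $\gcd(d,e)=1$.
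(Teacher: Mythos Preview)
Your instinct for part (a) is correct but you never pin it down. The clean execution is: if $Ds = \lambda u^m$, then {\it Lemma~\ref{slice}} gives $R_u = (\ker D)_u[s] = k[u^{\pm1},s][v]$, a polynomial ring in $v$ over $k[u^{\pm1},s]$. Now take $D' = u^j\,\partial/\partial v$ for $j$ large enough that $D'(R) \subseteq R$; this is a nonzero element of $\lnd(R)$ with $D's=0$, contradicting the hypothesis on $s$. Your false starts ($u^{m-1}D$, invoking {\it Proposition~\ref{AB-theorem}}, etc.) are unnecessary detours---the right derivation is simply $\partial/\partial v$, not a modification of $D$.

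Part (b) has a genuine gap: your numerical manipulations are, as you yourself note, circular, and the ``clean conclusion'' you gesture toward (that $\min(d,e)$ divides $\deg(s)-\Sigma$) is false in general. The paper's argument is quite different. Set $\omega = \deg(s) - (a+b+c)$ and suppose for contradiction that $\min(d,e) > \max(2,\omega)$. The key observation is that then $\omega$ is nonzero and strictly less than both $d$ and $e$, hence $\omega \notin d\N + e\N$. Now each monomial $u^iv^j$ in $Ds$ satisfies $id+je = d+e+\omega$, i.e.\ $(i-1)d + (j-1)e = \omega$; if both $i\ge1$ and $j\ge1$ this would place $\omega$ in $d\N+e\N$, so every term has $i=0$ or $j=0$. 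If $Ds$ is a single monomial this contradicts part (a). If there are two, they must be $\lambda u^{i_0}$ and $\mu v^{j_0}$ with $i_0 d = j_0 e = d+e+\omega$; coprimality forces $i_0 = \sigma e$, $j_0 = \sigma d$, whence $\omega = \sigma de - d - e$. Since $\omega \notin d\N + e\N$ and the Frobenius number of $\langle d,e\rangle$ is $de-d-e$, necessarily $\sigma=1$, so $\omega = de-d-e$. But $\min(d,e) > 2$ easily gives $de - d - e > \min(d,e)$, contradicting $\omega < \min(d,e)$.

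The ingredient you are missing is the Frobenius-number step: it is what converts ``$Ds$ involves both variables'' into the sharp bound, and no amount of subtracting exponent pairs will recover it.
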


\begin{proof}
Part (a). Assume that $Ds \in k[u]$; then $Ds = \lambda u^i$ for some $\lambda \in k^*$ and $i\ge0$.
{\it Lemma\,\ref{slice}} implies $R_u = A_u[s] = k[u^{\pm1}, s][v]$.
Therefore, for each $j\ge0$, $u^j \frac{\partial}{\partial v} : R_u \to R_u$  is a nonzero locally nilpotent derivation with kernel $k[u^{\pm1},s]$.
For a suitable choice of $j$, $u^j \frac{\partial}{\partial v}$ maps $R$ into itself, so the restriction $D' : R \to R$ of $u^j \frac{\partial}{\partial v}$
is a nonzero locally nilpotent derivation of $R$ such that $D's=0$, contradicting the hypothesis that no such $D'$ exists.
Therefore, $Ds\notin k[u]$ and the same argument shows $Ds\notin k[v]$.

Part (b). Define
\[
m = \deg(u)\,\, ,\,\, n = \deg(v)\,\, ,\,\, \omega = \deg(s) - (a +b +c)
\] 
noting that $\omega\ne 0$ by hypothesis. {\it Proposition\,\ref{Daigle00result19}} implies $\deg(D) = (m+n)-(a+b+c)$ and, since
$a,b,c$ are pairwise relatively prime positive integers,  $\gcd(m,n)=1$.
Since $Ds \neq 0$,
it follows that $\deg( Ds ) = m+n+\omega$, i.e., $Ds$ is a nonzero homogeneous element of $k[u,v]$ of degree $m+n+\omega$.
Note that if $u^iv^j$ is a monomial appearing in $Ds$ then $\deg (Ds)=mi+nj$. 
Therefore, the set 
\[
S := \{ (i,j) \in \N^2 \, |\, mi+nj = m+n+\omega \}
\]
is not empty.

Proceeding by contradiction, assume that $\max(2, \omega) < \min(m,n)$. 
Since $\omega\ne 0$ we see that $\omega\notin m\N+n\N$. For any $(i,j)\in S$ we have $\omega =(i-1)m+(j-1)n$, so:
\begin{equation} \label {ieq0orjeq0}
\text{If $(i,j) \in S$ then $i=0$ or $j=0$.}
\end{equation}
We claim that $|S|=1$. If this is not the case, then \eqref{ieq0orjeq0} implies that there exist $i,j>0$ such that $(i,0), (0,j) \in S$.
Then $mi = m+n+\omega = nj$. Since $\gcd(m,n)=1$, there exists an integer $\sigma >0$ such that $j=\sigma m$ and $i=\sigma n$.
It follows that $m+n+\omega = \sigma mn$ and we have:
\[
\sigma mn-m-n = \omega \notin m\N+n\N 
\]
Recall that the Frobenius number of the numerical semigroup $m\N+n\N$ is the largest integer not in $m\N+n\N$ and that this number is given by $mn-m-n$.
We thus see that $\sigma =1$ and $\omega = mn-m-n$.
Using $\min(m,n)>2$, we easily obtain $mn-m-n>\min(m,n)$, which contradicts the fact that $\max(2,\omega) < \min(m,n)$.
This contradiction shows that $|S|=1$.

From $|S| = 1$, it follows that $Ds = \lambda u^i v^j$ for some $\lambda \in k^*$ and $i,j\in\N$. But then $i=0$ or $j=0$ by \eqref{ieq0orjeq0},
giving a contradiction to part (a). 

Therefore, $\min (m,n)\le \max (2,\omega )$.
\end{proof}

\begin{lemma}\label {rank-two}
Let $B=k[x,y,z]=k^{[3]}$. Suppose that $D\in {\rm LND}(B)$ is irreducible and $Dx=0$. 
There exist $p\in B$ and $f(x)\in k[x] \setminus \{0\}$ such that $\krn D=k[x,p]$ and the plinth ideal of $D$ is $f(x)k[x,p]$. 
\end{lemma}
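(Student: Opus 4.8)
The plan is to first determine $\krn D$ and then compute its plinth ideal by base change to $k(x)$.

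\emph{Step 1: $\krn D=k[x,p]$.} Write $A=\krn D$. By Theorem~\ref{Miyanishi}, $A\cong k^{[2]}$, and since $x$ is a variable of $B$ it is prime in the factorially closed subring $A$. Because the morphism $\Spec A\to\A^1=\Spec k[x]$ induced by $k[x]\hookrightarrow A$ is dominant and $\Spec A\cong\A^2$ is smooth, generic smoothness shows that the fibre $A/(x-\alpha)A$ is a regular ring for general $\alpha\in k$; fix such an $\alpha$. The reduction $\bar D$ of $D$ modulo $(x-\alpha)$ is a \emph{nonzero} locally nilpotent derivation of $B/(x-\alpha)B=k^{[2]}$ (nonzero because otherwise $D(B)\subseteq(x-\alpha)B$, contradicting irreducibility of $D$), so $\krn\bar D\cong k^{[1]}$ by Rentschler's theorem. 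Factorial closedness gives $A\cap(x-\alpha)B=(x-\alpha)A$, so $A/(x-\alpha)A$ is a one-dimensional domain that embeds into $\krn\bar D\cong k^{[1]}$; since $k^{[1]}$ is then integral (hence module-finite) over it, $\Spec(A/(x-\alpha)A)$ is a smooth affine curve finitely dominated by $\A^1$, hence isomorphic to $\A^1$, i.e.\ $A/(x-\alpha)A\cong k^{[1]}$. By the Abhyankar--Moh--Suzuki theorem the prime $x-\alpha$ is then a variable of $A\cong k^{[2]}$, so $A=k[x-\alpha,p]=k[x,p]$ for a suitable $p\in A$.

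\emph{Step 2: the plinth ideal is $f(x)\,k[x,p]$.} Put $K=k(x)$ and $S=k[x]\setminus\{0\}$ and localize at $S$: then $S^{-1}B=K[y,z]=K^{[2]}$ and $S^{-1}D$ is a nonzero $K$-derivation with kernel $S^{-1}A=K[p]$. By Rentschler's theorem we may write $K[y,z]=K[p,q]$ with $S^{-1}D=c(p)\,\partial/\partial q$ for a nonzero $c\in K[p]$, and a short computation gives that the plinth ideal of $S^{-1}D$ is $c(p)\,K[p]$. By Proposition~\ref{plinth}(a) the plinth ideal of $D$ is principal, say $\mathfrak p=aA$ with $a\ne0$; since the plinth ideal commutes with the localization ($S^{-1}\mathfrak p$ is the plinth ideal of $S^{-1}D$), $a$ and $c(p)$ are associates in $K[p]$, so $\deg_p a=\deg_p c$. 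Now expressing $y$ and $z$ as elements of $K[p,q]$ shows that $Dy$ and $Dz$ are both divisible by $c(p)$ in $K[y,z]$. If $\deg_p c\ge1$, then after scaling $c(p)$ by an element of $K^*$ we may take it primitive in $B=k[x][y,z]$ and nonconstant in $y,z$ (as $p$ is a variable of $K[y,z]$); by Gauss's lemma $c(p)$ then divides $Dy$ and $Dz$ in $B$, so $c(p)$ is a non-unit common divisor of $Dy$ and $Dz$ in $B$. But $\gcd_B(Dy,Dz)$ is a unit: a non-unit common divisor would divide $D(b)$ for every $b\in B$ (using $Dx=0$ and the Leibniz rule), contradicting irreducibility of $D$. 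Hence $\deg_p c=0$, so $a\in K^*\cap k[x,p]=k[x]\setminus\{0\}$, and $\mathfrak p=f(x)\,k[x,p]$ with $f(x)=a$.

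The substantive part is Step~1 --- upgrading ``$x\in\krn D$'' to ``$x$ is a variable of $\krn D$'' --- where the key move is to reduce modulo a general $(x-\alpha)$, use Rentschler's theorem together with generic smoothness to identify the fibre with $\A^1$, and then appeal to Abhyankar--Moh--Suzuki; this identification (equivalently, that the kernel of a corank-one locally nilpotent derivation of $k^{[3]}$ is a polynomial ring in the fixed variable and one further element) is also available in the literature. Step~2 is then routine, the only mild subtlety being the Gauss's-lemma passage between divisibility in $K[y,z]$ and in $k[x][y,z]$.
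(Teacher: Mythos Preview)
Your proof is correct, but it follows a substantially different route from the paper's.

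For Step~1, the paper simply cites \cite{Daigle.Freudenburg.98}, Corollary~3.2(3), for the equality $\krn D=k[x,p]$. Your argument instead re-derives this from first principles: reduce modulo a generic $(x-\alpha)$, use Rentschler to get $\krn\bar D\cong k^{[1]}$, identify the fibre $A/(x-\alpha)A$ with $\A^1$, and conclude via Abhyankar--Moh--Suzuki. This is a legitimate and self-contained substitute for the citation. (Two small remarks: the integrality of $k^{[1]}$ over the image of $A/(x-\alpha)A$ is not strictly needed --- once you know $A/(x-\alpha)A$ is a normal one-dimensional $k$-domain contained in $k^{[1]}$, its unit group is $k^*$ and its function field is rational by L\"uroth, which already forces it to be $k^{[1]}$; and the implication ``smooth affine curve finitely dominated by $\A^1$ $\Rightarrow$ $\A^1$'' deserves a one-line justification along those same lines.)

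For Step~2 the contrast is sharper. The paper's argument is a short factorial-closedness trick: it quotes \cite{Daigle.Freudenburg.98}, Corollary~3.2(5), to get a nonzero $f(x)\in k[x]\cap DB$, writes the plinth ideal as $aA$ via {\it Proposition~\ref{plinth}(a)}, and then observes that $a\mid f$ in $B$ together with the factorial closedness of $k[x]$ in $B$ forces $a\in k[x]$; comparing with $f$ gives $aA=f(x)A$. Your approach instead base-changes to $K=k(x)$, invokes Rentschler's normal form $S^{-1}D=c(p)\,\partial/\partial q$ on $K^{[2]}$, and uses a Gauss's-lemma argument to show that $\deg_p c\ge 1$ would force a non-unit common divisor of $Dy,Dz$ in $B$, contradicting irreducibility. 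This is longer but avoids the second citation to \cite{Daigle.Freudenburg.98}. The trade-off: the paper's proof is shorter and uses only elementary closure properties (given the cited input), while yours is more self-contained and gives a concrete picture of the plinth generator as the coefficient in the Rentschler normal form over $k(x)$.
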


\begin{proof} The existence of $p$ such that $A:=\krn D=k[x,p]$ is given by \cite{Daigle.Freudenburg.98}, Corollary 3.2 part (3). Part (5) of this same result shows that $k[x]\cap DB\ne\{ 0\}$.
So $k[x]\cap DB$ is a nonzero ideal of $k[x]$. Choose $f(x)\in k[x]$ such that $k[x]\cap DB=f(x)k[x]$. By {\it Proposition\,\ref{plinth}}, there exists $a\in A$ such that $A\cap DB=aA$. 
In particular, $f\in aA$.  Clearly, $k[x]$ is factorially closed in $B$. 
Therefore, $a\in k[x]$, and $a\in k[x]\cap DB$ gives $a\in fA$. It follows that $aA=fA$ and $A\cap DB=fA$. 
\end{proof}


\paragraph{\bf Automorphism Groups.}

\begin{noname} \label {7t65fr56gb12dwe8}
Let $k$ be an infinite field and $A$ a $k$-domain.

If $H$ is a subgroup of $\Aut_k(A)$, we write $\CC(H)$ for the centralizer of $H$ in $\Aut_k(A)$.

Let $\ggoth$ be a $\Z^n$-grading of $A$,  $A = \bigoplus_{i \in \Z^n} A_i$.
For each $t \in (k^*)^n$, define $\phi_t \in \Aut_k(A)$ by $\phi_t ( \sum_{i \in \Z^n} a_i ) = \sum_{i \in \Z^n} t^i a_i$
(where $a_i \in A_i$ for all $i$, and where $t^i a_i$ is an abbreviation for the product $t_1^{i_1} \cdots t_n^{i_n} a_{i_1,\dots,i_n}$).
We shall use the notation
$$
T(\ggoth) = \setspec{ \phi_t }{ t \in (k^*)^n } .
$$
It is clear that $T(\ggoth)$ is a subgroup of $\Aut_k(A)$, and the following property of $T(\ggoth)$ is well known:
\begin{equation} \label {mnbkjhgertypoi345098asdb7h2wsgt}
\CC( T(\ggoth) ) = \setspec{ \alpha \in \Aut_k(A) }{ \alpha(A_i) = A_i \text{ for all $i \in \Z^n$} } .
\end{equation}
Moreover, one can see that
\begin{equation} \label {23ws5rf8uyguytrpoimnlk098uzxasdf3}
\begin{minipage}{.8\textwidth}
if $k$ is algebraically closed then $T(\ggoth)$ is isomorphic to $(k^*)^r$,
where $r$ is the rank of the subgroup of $\Z^n$ generated by $\setspec{ i \in \Z^n }{ A_i \neq 0 }$.
\end{minipage}
\end{equation}
\end{noname}

\begin{definition} \label {lkjmnbv987y123erfpoiusdfv8gut}
Let $k$ be an infinite field and $A$ a $k$-domain.
A subgroup $T$ of $\Aut_k(A)$ is called a {\it subtorus of $\Aut_k(A)$} if there exist an $n \in \N$ and a $\Z^n$-grading $\ggoth$ of $A$ such that $T=T(\ggoth)$.
\end{definition}

\begin{remark}
Let $k$ be an infinite field and $A$ a $k$-domain.
If $T$ is a subtorus of $\Aut_k(A)$ and $\alpha \in \Aut_k(A)$ then $\alpha T \alpha^{-1}$ is a subtorus of $\Aut_k(A)$.
Indeed, we have $T = T(\ggoth)$ for some grading $\ggoth = \big( A = \bigoplus_{i \in \Z^n} A_i \big)$.
Then $\alpha T \alpha^{-1} = T(\ggoth')$ where $\ggoth' = \big( A = \bigoplus_{i \in \Z^n} \alpha(A_i) \big)$.
\end{remark}

\begin{theorem}\label{Arz-Gai} {\rm \cite{Arzhantsev.Gaifullin.17}}
Let $k$ be an algebraically closed field of characteristic zero and $B$ be a rigid affine $k$-domain.
There is a subtorus $\mathbb{T}$ of ${\rm Aut}_k(B)$ that contains any other subtorus of ${\rm Aut}_k(B)$.
Moreover, $\mathbb{T}$ and $\mathcal{C}(\mathbb{T})$ are normal subgroups of $\Aut_k(B)$.
\end{theorem}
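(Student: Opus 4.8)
The plan is to reduce everything to one structural principle: \emph{every connected algebraic subgroup of $\Aut_k(B)$ is a torus}, where an algebraic subgroup means a subgroup acting algebraically on $X=\Spec B$ (equivalently, the image of a morphism of ind-groups $H\to\Aut_k(B)$ with $H$ linear algebraic). This follows from rigidity: writing the identity component as $G^{\circ}=L\ltimes R_u(G^{\circ})$, a nontrivial unipotent radical would contain a copy of $\G_a$, producing a nontrivial $\G_a$-action on $X$ and hence a nonzero locally nilpotent derivation of $B$, which is impossible; so $G^{\circ}$ is reductive, and its semisimple derived subgroup is likewise trivial since a semisimple group of positive dimension contains root subgroups isomorphic to $\G_a$. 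Thus $G^{\circ}$ is a torus. Granting this, a subtorus presented by an effective grading in $n$ variables yields a faithful action of $(k^{*})^{n}$ on the irreducible variety $X$, whose generic orbit then has dimension $n$; so $n\le\dim B$, and there is a subtorus $\mathbb{T}$ of maximal dimension, say $d_0$.

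The heart of the argument is to show that any two subtori $T_1=T(\ggoth_1)$ and $T_2=T(\ggoth_2)$ lie in a common subtorus. Since $T_1$ and $T_2$ act rationally on the finitely generated $k$-algebra $B$, I would build a finite-dimensional $k$-subspace $V\subseteq B$ that generates $B$ and is stable under both actions, beginning with a generating subspace and enlarging it alternately under $T_1$ and $T_2$ until the process terminates. Then $\langle T_1,T_2\rangle$ embeds into $\GL(V)$; its Zariski closure $\bar G$ there is a linear algebraic group, and because the condition of inducing a $k$-algebra automorphism of $B$ is Zariski-closed on $\GL(V)$, $\bar G$ acts algebraically on $B$ — that is, $\bar G$ is an algebraic subgroup of $\Aut_k(B)$. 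By the principle $\bar G^{\circ}$ is a torus containing the connected subgroups $T_1$ and $T_2$, and since any torus action on an affine variety comes from a grading, $\bar G^{\circ}$ is a subtorus containing $T_1$ and $T_2$.

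The rest is formal. For an arbitrary subtorus $T$, the previous step puts $\mathbb{T}$ and $T$ inside a common subtorus $T''$; maximality of $d_0$ forces $\dim T''=d_0$, and since $\mathbb{T}$ is a closed subtorus of full dimension in the connected group $T''$ we get $\mathbb{T}=T''\supseteq T$. Hence $\mathbb{T}$ contains every subtorus and is in particular the unique one of dimension $d_0$. If $\alpha\in\Aut_k(B)$ then $\alpha\mathbb{T}\alpha^{-1}$ is again a subtorus (as in the Remark above), of dimension $d_0$, hence equal to $\mathbb{T}$; so $\mathbb{T}\trianglelefteq\Aut_k(B)$. Finally $\mathcal{C}(\mathbb{T})$ is the centralizer in $\Aut_k(B)$ of a normal subgroup, and such a centralizer is always normal: for $c\in\mathcal{C}(\mathbb{T})$, $\alpha\in\Aut_k(B)$ and $t\in\mathbb{T}$ one has $\alpha^{-1}t\alpha\in\mathbb{T}$, whence $(\alpha c\alpha^{-1})\,t\,(\alpha c\alpha^{-1})^{-1}=\alpha\,c(\alpha^{-1}t\alpha)c^{-1}\alpha^{-1}=\alpha(\alpha^{-1}t\alpha)\alpha^{-1}=t$.

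I expect the genuine obstacle to be the structural principle together with the construction of $V$: one must verify that $\langle T_1,T_2\rangle$ admits a common finite-dimensional generating stable subspace, that the Zariski closure of $\langle T_1,T_2\rangle$ in $\GL(V)$ genuinely lies inside $\Aut_k(B)$, and that this algebraic group — having no $\G_a$-subgroup by rigidity — has torus identity component. Everything downstream of that is elementary group theory together with standard facts about torus actions and $\Z^n$-gradings.
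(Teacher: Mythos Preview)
The paper does not prove this theorem; it is quoted from \cite{Arzhantsev.Gaifullin.17} and used as a black box, so there is no proof in the paper to compare against.

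On its own merits, your outline has the right architecture: rigidity forbids $\G_a$-subgroups, hence every connected algebraic subgroup of $\Aut_k(B)$ is a torus; a dimension bound gives a subtorus of maximal dimension; any two subtori sit inside a common one; uniqueness and normality then follow formally, and your verification of the normality of $\CC(\mathbb{T})$ is correct. You are also honest about where the difficulty lies. The step you flag --- producing a finite-dimensional $V\subseteq B$ that generates $B$ and is stable under both $T_1$ and $T_2$ --- is genuinely the crux, and your alternating-enlargement procedure does not obviously terminate (for two $\G_a$-actions it typically does not, and nothing in your sketch uses that the $T_i$ are tori at this point). Making this rigorous usually goes through the ind-group structure on $\Aut(X)$ for $X$ affine, so that one can take the Zariski closure of $\langle T_1,T_2\rangle$ directly inside $\Aut(X)$ rather than manufacturing a $\GL(V)$ by hand; alternatively one proves directly that two torus actions on an affine variety can be simultaneously linearized. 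Once such a $V$ is in hand, your passage to the closure in $\GL(V)$, the observation that the algebra-automorphism condition is Zariski-closed, and the downstream group theory are all correct.
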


\begin{proposition}  \label {CpnoiuB387vbxncAru}
Let $k$ be an infinite field, $A$ a $k$-domain and $\ggoth$ a $\Z^n$-grading of $A$ given by $A = \bigoplus_{i \in \Z^n} A_i$, 
and consider the submonoid $S = \setspec{ i \in \Z^n }{ A_i \neq 0 }$ of $\Z^n$.
If $T(\ggoth)$ is a normal subgroup of $\Aut_k(A)$ then
there exists a group homomorphism $\Aut_k(A) \to \Aut(S)$, $\alpha \mapsto \tilde\alpha$, such that:
\begin{equation}  \tag{$*$}
\text{for each $\alpha \in \Aut_k(A)$ and $i \in S$, \ \   $\alpha(A_i) = A_{ \tilde\alpha(i) }$.}
\end{equation}
Moreover, the kernel of the homomorphism $\alpha \mapsto \tilde\alpha$ is $\CC(T(\ggoth))$.
\end{proposition}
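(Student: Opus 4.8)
The plan is to reduce the whole statement to one assertion: that every $\alpha \in \Aut_k(A)$ permutes the homogeneous components of $\ggoth$, i.e.\ for each $i \in S$ there is a $j \in S$ with $\alpha(A_i) = A_j$. Granting this, one sets $\tilde\alpha(i) = j$ and the remainder is formal bookkeeping, so the effort should go into proving this assertion.

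Fix $\alpha \in \Aut_k(A)$ and $i \in S$, and choose a nonzero $a \in A_i$. Because $T(\ggoth)$ is normal, $\alpha^{-1} g \alpha \in T(\ggoth)$ for every $g \in T(\ggoth)$, so $\alpha^{-1} g \alpha = \phi_t$ for some $t \in (k^*)^n$; since $\phi_t(a) = t^i a$ this gives $g(\alpha(a)) = \alpha(\phi_t(a)) = t^i\,\alpha(a)$, so that $\alpha(a)$ is a common eigenvector for the entire group $T(\ggoth)$. Write $\alpha(a) = \sum_{j \in S} b_j$ with $b_j \in A_j$; applying $g = \phi_u$ for an arbitrary $u \in (k^*)^n$ shows that $u^j = u^{j'}$ for all $u \in (k^*)^n$ whenever $b_j \neq 0$ and $b_{j'} \neq 0$, and since $k$ is infinite this forces $j = j'$ (a Laurent monomial $u \mapsto u^m$ that is identically $1$ on $(k^*)^n$ must have $m = 0$). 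Hence $\alpha(a)$ is homogeneous, and nonzero since $\alpha$ is injective, so its degree lies in $S$; comparing homogeneous components in $\alpha(a + a')$ shows this degree does not depend on the nonzero $a \in A_i$ chosen, so it may be called $\tilde\alpha(i)$, and $\alpha(A_i) \subseteq A_{\tilde\alpha(i)}$ follows. I expect this to be the only real difficulty, and it is exactly here that both hypotheses are used: normality of $T(\ggoth)$ is what keeps the conjugates of the $\phi_t$ inside $T(\ggoth)$, so that $\alpha(a)$ is an eigenvector for the full $T(\ggoth)$, and infinitude of $k$ is what lets one conclude $j = j'$.

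To finish, I would check that $i \mapsto \tilde\alpha(i)$ is a monoid endomorphism of $S$: $\tilde\alpha(0) = 0$ because $1 \in A_0$ and $\alpha(1) = 1$, and $\tilde\alpha(i + j) = \tilde\alpha(i) + \tilde\alpha(j)$ because $A$ is a domain, so nonzero $a \in A_i$, $b \in A_j$ produce a nonzero $ab \in A_{i+j}$ and a nonzero $\alpha(a)\alpha(b) \in A_{\tilde\alpha(i) + \tilde\alpha(j)}$. Directly from the construction, $\widetilde{\mathrm{id}_A} = \mathrm{id}_S$ and $\widetilde{\alpha\beta} = \tilde\alpha \circ \tilde\beta$, so $\alpha \mapsto \tilde\alpha$ is a homomorphism from $\Aut_k(A)$ to the endomorphism monoid of $S$ under composition; consequently each $\tilde\alpha$ is invertible with inverse $\widetilde{\alpha^{-1}}$, hence lies in $\Aut(S)$, and the inclusion $\alpha(A_i) \subseteq A_{\tilde\alpha(i)}$, applied also to $\alpha^{-1}$, upgrades to the equality in $(*)$. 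Lastly, $\alpha$ lies in the kernel of $\alpha \mapsto \tilde\alpha$ iff $\alpha(A_i) = A_i$ for all $i \in S$, and since $A_i = 0$ for $i \notin S$ this is equivalent to $\alpha(A_i) = A_i$ for all $i \in \Z^n$, which by \eqref{mnbkjhgertypoi345098asdb7h2wsgt} is precisely the condition $\alpha \in \CC(T(\ggoth))$.
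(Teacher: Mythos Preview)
Your proof is correct and follows essentially the same approach as the paper: both use normality to write $\alpha^{-1}\phi_u\alpha = \phi_t$, compute $\phi_u(\alpha(a)) = t^i\alpha(a)$, compare homogeneous components, and invoke infinitude of $k$ to force equality of degrees. The only cosmetic difference is that the paper argues by contradiction at the level of the whole subspace $A_i$ (assuming two distinct projections are nonzero and constructing a single witness $x$), whereas you first show each individual $\alpha(a)$ is homogeneous and then check the degree is independent of $a$; the remaining bookkeeping (monoid homomorphism, invertibility, upgrading inclusion to equality, identifying the kernel via \eqref{mnbkjhgertypoi345098asdb7h2wsgt}) is identical.
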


\begin{proof}
Let $T = T(\ggoth)$.
We have $T = \setspec{ \phi_t }{ t \in (k^*)^n }$ where $\phi_t$ is defined in paragraph \ref{7t65fr56gb12dwe8}.
For each $i \in \Z^n$, let $p_i : A \to A_i$ be the canonical projection.
Let $\alpha \in \Aut_k(A)$. We claim:
\begin{equation}  \label {i7u65e432z34f61qj9x1mw}
\text{For each $i \in \Z^n$, there exists $j \in \Z^n$ such that $\alpha(A_i) \subseteq A_j$.}
\end{equation}
Indeed, assume that \eqref{i7u65e432z34f61qj9x1mw} is false.
Then there exist $i,j_1, j_2 \in \Z^n$ such that $j_1 \neq j_2$, $p_{j_1}\big( \alpha(A_i) \big) \neq 0$ and $p_{j_2}\big( \alpha(A_i) \big) \neq 0$.
Pick $x_1 \in A_i$ such that $p_{j_1}( \alpha(x_1) ) \neq 0$
and $x_2 \in A_i$ such that $p_{j_2}( \alpha(x_2) ) \neq 0$;
then some $x \in \{x_1,x_2,x_1+x_2\}$ satisfies  $p_{j_1}( \alpha(x) ) \neq 0$ and $p_{j_2}( \alpha(x) ) \neq 0$.
Write $\alpha(x) = \sum_{j \in \Z^n} y_j$ where $y_j \in A_j$ for all $j$. Note that $y_{j_1} \neq 0$ and $y_{j_2} \neq 0$.
Since $k$ is an infinite field, we can choose $s \in (k^*)^n$ such that $s^{j_1} \neq s^{j_2}$.
Since $T$ is normal in $\Aut_k(A)$, we have $\alpha^{-1} \circ \phi_s \circ \alpha = \phi_t$ for some $t \in (k^*)^n$.
The equalities
$$
\textstyle
\sum_{j \in \Z^n} s^j y_j = \phi_s( \sum_{j \in \Z^n} y_j ) = \phi_s(\alpha(x)) = \alpha(\phi_t(x)) = \alpha(t^i x) = t^i \alpha(x) = \sum_{j \in \Z} t^i y_j 
$$
show that $s^j y_j = t^i y_j$ for all $j \in \Z^n$.
Since  $y_{j_1} \neq 0$ and $y_{j_2} \neq 0$, we get $s^{j_1} = t^i = s^{j_2}$, which contradicts our choice of $s$.
This proves \eqref{i7u65e432z34f61qj9x1mw}.

Statement \eqref{i7u65e432z34f61qj9x1mw} implies that there exists a set map $\tilde{\alpha} : S \to S$ such that
$\alpha(A_i) \subseteq A_{\tilde{\alpha}(i)}$ for all $i \in S$.
Note that $\tilde\alpha$ is uniquely determined by $\alpha$. If $i,j \in S$ then
$$
A_{ \tilde{\alpha}(i+j) } \supseteq \alpha( A_{i+j} ) \supseteq \alpha( A_i A_j ) = \alpha(A_i) \alpha(A_j) \subseteq A_{\tilde{\alpha}(i)} A_{\tilde{\alpha}(j)} \subseteq A_{\tilde{\alpha}(i) + \tilde{\alpha}(j)} \, ,
$$
so $A_{ \tilde{\alpha}(i+j) } \cap  A_{\tilde{\alpha}(i) + \tilde{\alpha}(j)} \neq 0$ and hence $\tilde{\alpha}(i+j) = \tilde{\alpha}(i) + \tilde{\alpha}(j)$.
Thus, $\tilde\alpha$ is an endomorphism of the monoid $S$.
It is easy to check that $\widetilde{\text{\rm id}_A} = \text{\rm id}_S$
and that if $\alpha,\beta \in \Aut_k(A)$ then $\widetilde{\alpha \circ \beta} = \tilde{\alpha} \circ \tilde{\beta}$;
so $\alpha \mapsto \tilde\alpha$ is a homomorphism of groups from $\Aut_k(A)$ to $\Aut(S)$.
The fact that $\widetilde{ \alpha^{-1} } = {\tilde\alpha}^{-1}$ implies that the inclusion $\alpha(A_i) \subseteq A_{\tilde\alpha(i)}$ is in fact an equality.
This shows that there exists a group homomorphism $\Aut_k(A) \to \Aut(S)$, $\alpha \mapsto \tilde\alpha$, satisfying $(*)$.
By \eqref{mnbkjhgertypoi345098asdb7h2wsgt}, the kernel of this homomorphism is $\CC(T(\ggoth))$.
\end{proof}

\begin{corollary} \label {E2sd3sdfpoiu213er098asdCmnasdlkj9w87y}
Let $k$ be an infinite field, $A$ a $k$-domain and $\ggoth$ an $\N$-grading of $A$.
If $T(\ggoth)$ is a normal subgroup of $\Aut_k(A)$ then $\Aut_k(A) = \CC(T(\ggoth))$.
\end{corollary}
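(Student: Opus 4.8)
The plan is to derive this from Proposition~\ref{CpnoiuB387vbxncAru}, applied with $n=1$, by showing that the monoid $S = \setspec{ i \in \Z }{ A_i \neq 0 }$ admits only the identity automorphism. Since $\ggoth$ is an $\N$-grading, $S$ is a submonoid of $\N$; moreover $0 \in S$, because the identity element of a graded ring is homogeneous of degree $0$, so $A_0 \neq 0$. The hypothesis that $T(\ggoth)$ is normal in $\Aut_k(A)$ is exactly the hypothesis of Proposition~\ref{CpnoiuB387vbxncAru}, which therefore supplies a group homomorphism $\Aut_k(A) \to \Aut(S)$, $\alpha \mapsto \tilde\alpha$, with kernel $\CC(T(\ggoth))$. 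Hence it is enough to prove $\Aut(S) = \{\,\text{\rm id}_S\,\}$: then the homomorphism is trivial, so its kernel $\CC(T(\ggoth))$ equals $\Aut_k(A)$, which is the assertion.

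To prove $\Aut(S) = \{\,\text{\rm id}_S\,\}$ I would proceed as follows. The case $S = \{0\}$ is trivial, so assume $S \neq \{0\}$ and let $G$ be the subgroup of $\Z$ generated by $S$, so that $G = g\Z$ for some integer $g \geq 1$ and $S$ generates $G$ as a group. Given $\phi \in \Aut(S)$, I would first check that the rule $\bar\phi(a-b) = \phi(a) - \phi(b)$ for $a,b \in S$ is well defined (if $a-b = c-d$ with $a,b,c,d \in S$, then $a+d = b+c$ in $S$, whence $\phi(a)+\phi(d) = \phi(b)+\phi(c)$) and yields a group endomorphism $\bar\phi : G \to G$ restricting to $\phi$ on $S$. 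Applying the same construction to $\phi^{-1}$, and using that $S$ generates $G$, shows that $\bar\phi$ is an automorphism of $G \cong \Z$, so $\bar\phi = \pm\,\text{\rm id}$. The possibility $\bar\phi = -\,\text{\rm id}$ is excluded: picking $s \in S$ with $s > 0$ would give $\phi(s) = -s < 0$, contradicting $\phi(s) \in S \subseteq \N$. Thus $\bar\phi = \text{\rm id}$ and $\phi = \text{\rm id}_S$.

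Combining the two steps yields $\Aut_k(A) = \CC(T(\ggoth))$. There is no serious obstacle here: the only point requiring verification is the triviality of $\Aut(S)$ for a submonoid $S$ of $\N$, and the short groupification argument above handles it.
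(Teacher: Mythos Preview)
Your proof is correct and follows exactly the same route as the paper: apply Proposition~\ref{CpnoiuB387vbxncAru} (with $n=1$) and observe that the kernel is all of $\Aut_k(A)$ because $\Aut(S)$ is trivial for a submonoid $S\subseteq\N$. The paper simply asserts that ``the only automorphism of $S$ is the identity map of $S$'' without argument, whereas you supply the clean groupification justification; this is the only difference, and it is a matter of detail rather than approach.
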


\begin{proof} 
Let $A = \bigoplus_{i \in \N} A_i$ be the grading $\ggoth$ and consider the monoid $S = \setspec{ i \in \N }{ A_i \neq 0 }$.
The only automorphism of $S$ is the identity map of $S$.
So $\Aut_k(A)$ is equal to the kernel of the group homomorphism $\Aut_k(A) \to \Aut(S)$, $\alpha \mapsto \tilde\alpha$, of {\it Proposition\,\ref{CpnoiuB387vbxncAru}}. 
That proposition states that the kernel of this homomorphism is equal to $\CC(T(\ggoth))$, so $\Aut_k(A) = \CC(T(\ggoth))$.
\end{proof}

\begin{corollary}  \label {5cFt2xwdlk12pwo09284x2ednBb7w36gt1}
Let $k$ be an algebraically closed field of characteristic zero,
$B$ a rigid affine $k$-domain and $\ggoth$ an $\N$-grading of $B$.
If $\CC( T(\ggoth) ) = T(\ggoth)$ then $\Aut_k(B) = T(\ggoth)$.
\end{corollary}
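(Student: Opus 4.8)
The plan is to identify $T(\ggoth)$ with the maximal subtorus provided by the Arzhantsev--Gaifullin theorem and then quote Corollary \ref{E2sd3sdfpoiu213er098asdCmnasdlkj9w87y}.

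First I would note that an $\N$-grading is in particular a $\Z^1$-grading, so $T(\ggoth)$ is a subtorus of $\Aut_k(B)$ in the sense of Definition \ref{lkjmnbv987y123erfpoiusdfv8gut}. Since $B$ is a rigid affine $k$-domain, Theorem \ref{Arz-Gai} gives a subtorus $\mathbb{T}$ of $\Aut_k(B)$ that contains every subtorus of $\Aut_k(B)$ and is a normal subgroup of $\Aut_k(B)$; in particular $T(\ggoth) \subseteq \mathbb{T}$. Next I would observe that $\mathbb{T}$, being of the form $T(\ggoth')$ for some grading $\ggoth'$, is abelian --- this is immediate from the definition of the automorphisms $\phi_t$ (alternatively from \eqref{23ws5rf8uyguytrpoimnlk098uzxasdf3}) --- so $\mathbb{T} \subseteq \CC(\mathbb{T})$.

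Then the hypothesis $\CC(T(\ggoth)) = T(\ggoth)$ collapses the picture: from $T(\ggoth) \subseteq \mathbb{T}$ we get $\CC(\mathbb{T}) \subseteq \CC(T(\ggoth))$, and chaining the inclusions gives $T(\ggoth) \subseteq \mathbb{T} \subseteq \CC(\mathbb{T}) \subseteq \CC(T(\ggoth)) = T(\ggoth)$, so $T(\ggoth) = \mathbb{T}$. In particular $T(\ggoth)$ is a normal subgroup of $\Aut_k(B)$, and since $\ggoth$ is an $\N$-grading, Corollary \ref{E2sd3sdfpoiu213er098asdCmnasdlkj9w87y} then yields $\Aut_k(B) = \CC(T(\ggoth)) = T(\ggoth)$.

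The only substantial ingredient is Theorem \ref{Arz-Gai}, whose proof is genuinely deep but which we may cite; granting it, the argument is the short chain of inclusions above. The main point requiring care is recognizing that the maximality of $\mathbb{T}$ together with the hypothesis $\CC(T(\ggoth)) = T(\ggoth)$ is precisely what forces $T(\ggoth)$ to be maximal, hence normal; once normality is secured, the $\N$-graded case of Corollary \ref{E2sd3sdfpoiu213er098asdCmnasdlkj9w87y} (which uses that the monoid $\setspec{i \in \N}{B_i \neq 0}$ has only the trivial automorphism) completes the proof.
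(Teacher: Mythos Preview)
Your proof is correct and follows essentially the same approach as the paper: use Theorem~\ref{Arz-Gai} to get a maximal subtorus $\mathbb{T}\supseteq T(\ggoth)$, use abelianness of $\mathbb{T}$ together with the hypothesis $\CC(T(\ggoth))=T(\ggoth)$ to force $\mathbb{T}=T(\ggoth)$, and then invoke normality of $\mathbb{T}$ plus Corollary~\ref{E2sd3sdfpoiu213er098asdCmnasdlkj9w87y}. The only cosmetic difference is that the paper writes the inclusion $\mathbb{T}\subseteq\CC(T(\ggoth))$ directly (since $\mathbb{T}$ is abelian and contains $T(\ggoth)$) rather than routing it through $\CC(\mathbb{T})$.
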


\begin{proof} 
Let $T = T(\ggoth)$.
By {\it Theorem\,\ref{Arz-Gai}}, there exists a subtorus $\field{T}$ of $\Aut_k(B)$ that contains any other subtorus of ${\rm Aut}_k(B)$.
Thus, $T \subseteq \field{T}$. Since $\field{T}$ is abelian, $\field{T} \subseteq \CC(T) = T$, so $\field{T} = T$.
{\it Theorem\,\ref{Arz-Gai}} also states that $\field{T} = T$ is normal in $\Aut_k(B)$, so {\it Corollary\,\ref{E2sd3sdfpoiu213er098asdCmnasdlkj9w87y}}
implies that $\Aut_k(B) = \CC(T) = T$.
\end{proof}

\paragraph{\bf Further preliminary results.}
For the remainder of {\it Section \ref{SEC:preliminaries}}, we assume that $k$ is an algebraically closed field of characteristic zero.

We say that a ring $R$ is a polynomial ring in one variable over a subring if there exists a subring $A\subset R$ and $t\in R$ such that $R=A[t]\cong_AA^{[1]}$.
Given  a $\Z$-graded integral domain $B$ and a nonzero homogeneous $\xi\in B$, $B_{\xi}$ denotes the localization of $B$ at $\{ 1,\xi ,\xi^2,\hdots\}$ and 
$B_{(\xi )}$ denotes the degree-0 subring of the graded ring $B_{\xi}$.

\begin{definition} \label {6gy2cmiwv6fta2w3dx1hj2iew}
Let $B$ be a $\Z$-graded integral domain. An element $\xi\in B$ is {\bf cylindrical} if it is nonzero, homogeneous, $\deg \xi\ne 0$
and $B_{(\xi )}$ is a polynomial ring in one variable over a subring. 
\end{definition}

\begin{theorem}[\cite{Chitayat.Daigle.22}, Part (1) of Theorem 1.2] \label {edh83yf6r79hvujhxu6wrefji9e} 
Let $B = \bigoplus_{i \in \Z}B_i$ be a $\Z$-graded affine domain over a field of characteristic zero.
Assume that one of the following conditions {\rm(i)}, {\rm(ii)} is satisfied:
\begin{itemize}

\item[\rm(i)] the transcendence degree of $B$ over $B_0$ is at least $2$;

\item[\rm(ii)] there exist $i,j$ such that $i < 0 < j$, $B_i \neq 0$ and $B_j \neq 0$.

\end{itemize}
Then the following are equivalent:
\begin{itemize}

\item[{\rm (a)}] There exists $d \ge 1$ such that $B^{(d)}$ is non-rigid,
where $B^{(d)} = \bigoplus_{ i \in \Z } B_{id}$.

\item[{\rm (b)}] $B$ has a cylindrical element.

\end{itemize}
\end{theorem}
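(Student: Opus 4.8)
My plan is to prove the two implications separately. The implication (b)$\Rightarrow$(a), together with all the reductions needed for (a)$\Rightarrow$(b), is essentially formal; the substance lies in the remaining core of (a)$\Rightarrow$(b). For (b)$\Rightarrow$(a), suppose $\xi$ is a cylindrical element, say of degree $e\neq 0$, so that $B_{(\xi)}=A[t]\cong A^{[1]}$ for some subring $A$. Inside the $\Z$-graded ring $B_\xi$ the subring consisting of the degrees divisible by $e$ is exactly $B_{(\xi)}[\xi,\xi^{-1}]=A[t,\xi,\xi^{-1}]$, and I would take on it the $A[\xi,\xi^{-1}]$-derivation $\partial/\partial t$, which is locally nilpotent and has $\xi$ in its kernel. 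Because $B$ is affine, the Veronese subalgebra $B^{(|e|)}$ is a finitely generated $k$-algebra, so for $N$ large enough $\xi^{N}\,\partial/\partial t$ sends the generators of $B^{(|e|)}$ into $B^{(|e|)}$, hence sends $B^{(|e|)}$ into itself; since $\xi$ lies in its kernel, it is still locally nilpotent, and its restriction $\delta$ to $B^{(|e|)}$ is thus a locally nilpotent derivation. One has $\delta\neq 0$ since on the localization $(B^{(|e|)})_\xi=A[t,\xi,\xi^{-1}]$ it sends $t$ to $\xi^{N}\neq 0$. Therefore $B^{(|e|)}$ is non-rigid, giving (a) with $d=|e|\geq 1$.

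For (a)$\Rightarrow$(b) I would first reduce to the case $d=1$. A cylindrical element of $B^{(d)}$ is automatically a cylindrical element of $B$ (one checks $B^{(d)}_{(\xi)}=B_{(\xi)}$, and $\deg_B\xi=d\deg_{B^{(d)}}\xi\neq 0$), and $B^{(d)}$ is itself an affine $\Z$-graded $k$-domain satisfying (i) or (ii); so it is enough to show that a non-rigid such $B$ admits a cylindrical element. Given a nonzero $D\in\lnd(B)$, I would write $D$ as a finite sum of homogeneous derivations (possible because $B$ is finitely generated) and replace it by its nonzero homogeneous component of lowest degree, which is again locally nilpotent; so we may assume $D$ is homogeneous of some degree $e$. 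Put $A=\ker D$. Its plinth ideal $A\cap D(B)$ is a nonzero graded ideal of $A$, so $D$ has a homogeneous local slice $f$, and setting $a=Df\in A$ (homogeneous), Lemma~\ref{slice} yields $B_a=A_a[f]\cong A_a^{[1]}$.

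The device for producing a cylindrical element is the following observation: if $g$ is a homogeneous local slice of $D$ such that $\xi:=Dg$ has nonzero degree and $m:=\deg(g)/\deg(\xi)$ is an integer, then the derivation $\xi^{m}\,\partial/\partial g$ of $B_\xi=A_\xi[g]$ is homogeneous of degree $0$, is locally nilpotent (as $\xi\in\ker(\partial/\partial g)$), and has $g/\xi^{m}$ as a slice; restricting it to $B_{(\xi)}=(B_\xi)_0$ exhibits $B_{(\xi)}$ as a polynomial ring in one variable over a subring, so $\xi$ is cylindrical. Hence the task becomes: find a homogeneous local slice $g$ of $D$ with $\deg(Dg)\neq 0$ and $\deg(Dg)\mid\deg(g)$. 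Conditions (i)/(ii) enter here to guarantee that $A=\ker D$ is not concentrated in degree $0$ — otherwise $B_a=A_a[f]$ forces the grading of $B$ to be one-signed with $\trdeg_{B_0}B\leq 1$, contradicting both (i) and (ii). So one may fix a homogeneous $u\in A$ with $\mu:=\deg u\neq 0$, and then the $u^{n}f$ ($n\geq 0$) are homogeneous local slices with $D(u^{n}f)=u^{n}a$ of degree $\deg a+n\mu$. When $e=0$ this finishes the argument, since then $\deg(u^{n}a)=\deg(u^{n}f)$ and one only needs $\deg a+n\mu\neq 0$, which fails for at most one $n$. When $e\neq 0$ I would pass to the largest Veronese subalgebra $B^{(d_0)}$, $d_0\mid e$, on which $D$ does not restrict to $0$, reducing to a case where the relevant degree invariants become compatible with the degree of $D$, apply the case already settled, and then carry the cylindrical element back to $B$.

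The step I expect to be the main obstacle is this last one: arranging the numerical (divisibility) compatibility between the degree of a homogeneous local slice and that of its $D$-image, while tracking which Veronese subring the derivation survives to. This is exactly where the hypotheses (i)/(ii) are indispensable — they exclude precisely the degenerate gradings for which no cylindrical element can exist — and it carries the technical weight of the argument in \cite{Chitayat.Daigle.22}. By contrast, the direction (b)$\Rightarrow$(a) and all the reductions above are routine.
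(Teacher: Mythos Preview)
The paper does not prove Theorem~\ref{edh83yf6r79hvujhxu6wrefji9e}; it is quoted verbatim from \cite{Chitayat.Daigle.22} and used as a black box in the proof of Proposition~\ref{CriterionKernel}. So there is no ``paper's own proof'' to compare against, and what you have written is an outline of how the argument in \cite{Chitayat.Daigle.22} presumably goes.

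That said, your sketch is largely sound where it is explicit. The direction (b)$\Rightarrow$(a) is correct: the identification $(B^{(|e|)})_\xi=B_{(\xi)}[\xi^{\pm1}]=A[t,\xi^{\pm1}]$ and the clearing-of-denominators trick with $\xi^N\partial/\partial t$ are exactly what one expects. Your reduction of (a)$\Rightarrow$(b) to the case $d=1$ is also fine (the equality $B^{(d)}_{(\xi)}=B_{(\xi)}$ and the preservation of (i)/(ii) under passage to a Veronese subring are routine checks), as is the reduction to a \emph{homogeneous} locally nilpotent derivation. A small remark: taking the lowest-degree homogeneous summand of $D$ is unusual---most references, including this paper's proof of Theorem~\ref{main1}, take the highest---but in a $\Z$-grading both extreme summands are locally nilpotent, so your choice is harmless.

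The genuine gap is the one you yourself flag. In the case $e\neq0$ you need a homogeneous local slice $g$ with $\deg(Dg)\mid\deg(g)$, equivalently $\deg(Dg)\mid e$, and your family $g=u^nf$ gives $\deg(Dg)=\deg a+n\mu$, which need not hit any divisor of $e$. Your proposed fix---restrict $D$ to the largest Veronese $B^{(d_0)}$ with $d_0\mid e$ on which $D$ survives---does not obviously reduce to the $e=0$ case: the restricted derivation is still homogeneous of degree $e/d_0\neq0$, so you are back where you started unless some further invariant (e.g.\ the saturation index of the grading, or the gcd of the degrees appearing in $\ker D$) drops. This is precisely the technical core of \cite{Chitayat.Daigle.22}, and your outline does not supply it; you are right to identify it as the place where hypotheses (i)/(ii) do real work, but the argument you gesture at would need to be made precise before this counts as a proof.
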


\begin{proposition}  \label {CriterionKernel} 
Let $R$ be a $3$-dimensional affine $k$-domain and let $f$ be a prime element of $R$.
Then {\rm(a)}$\Rightarrow${\rm(b)}$\Rightarrow${\rm(c)}, where:
\begin{itemize}
\item [{\rm (a)}] There exists a nonzero $\delta \in \lnd(R)$ such that $\delta (f)=0$.
\item [{\rm (b)}] The ring $R/fR$ is not rigid.
\item [{\rm (c)}] For each $\N$-grading $R = \bigoplus_{d \in \N} R_d$ such that $f$ is homogeneous and $R_0 = k$, $\Proj(R/fR )$ is a unicuspidal
rational curve.\footnote{A {\it unicuspidal rational curve\/} is a projective curve $C$ for which there exists a point $P \in C$ such that $C \setminus \{P\}$
is isomorphic to $\A^1_k$. Equivalently, it is a projective curve that contains an open set isomorphic to $\A^1_k$.}
\end{itemize}
\end{proposition}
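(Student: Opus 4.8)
The plan is to treat the two implications \textrm{(a)}$\Rightarrow$\textrm{(b)} and \textrm{(b)}$\Rightarrow$\textrm{(c)} separately. The first is a direct manipulation of locally nilpotent derivations, while the second is where the cylindricity criterion (Theorem \ref{edh83yf6r79hvujhxu6wrefji9e}) does the real work.

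For \textrm{(a)}$\Rightarrow$\textrm{(b)}, suppose $\delta\in\lnd(R)$ is nonzero with $\delta(f)=0$, so that $f\in\krn\delta$ and the ideal $fR$ is $\delta$-stable. The only thing that could prevent $\delta$ from inducing a \emph{nonzero} locally nilpotent derivation of the domain $R/fR$ is that $\delta(R)$ be contained in $fR$, and I would dispose of this as follows. Since $R$ is a Noetherian domain and $f$ is a nonzero non-unit, the Krull intersection theorem gives $\bigcap_{n\ge 1}f^nR=\{0\}$, so there is a largest integer $m\ge 0$ with $\delta(R)\subseteq f^mR$. Define $\delta_0:R\to R$ by $\delta_0(r)=\delta(r)/f^m$; this is a well-defined $k$-derivation, and since $\delta(f)=0$ an easy induction gives $\delta_0^{\,n}(r)=\delta^{\,n}(r)/f^{mn}$, so $\delta_0\in\lnd(R)$. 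It is nonzero because $\delta=f^m\delta_0$, it kills $f$, and by maximality of $m$ we have $\delta_0(R)\not\subseteq fR$. Hence $\delta_0$ induces a nonzero element of $\lnd(R/fR)$, and $R/fR$ is not rigid.

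For \textrm{(b)}$\Rightarrow$\textrm{(c)}, fix an $\N$-grading $R=\bigoplus_{d\in\N}R_d$ with $f$ homogeneous and $R_0=k$. Since $f$ is prime, $\deg f\ge 1$ (a nonzero element of $R_0=k$ is a unit), so $R_0\cap fR=\{0\}$ and $\bar R:=R/fR$ is an $\N$-graded affine $k$-domain with $\bar R_0=k$ and $\dim\bar R=2$. By hypothesis $\bar R$ is not rigid, so condition (a) of Theorem \ref{edh83yf6r79hvujhxu6wrefji9e} holds with $d=1$, and condition (i) of that theorem holds because $\trdeg_{\bar R_0}\bar R=2$. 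Therefore $\bar R$ has a cylindrical element $\xi$: it is homogeneous of nonzero degree and $\bar R_{(\xi)}=C[t]\cong_C C^{[1]}$ for some subring $C$. A standard computation with the grading (using that $\xi$ is transcendental over $\Frac(\bar R_{(\xi)})$ and that $\bar R_\xi$ is algebraic over $\bar R_{(\xi)}[\xi]$) gives $\dim\bar R_{(\xi)}=\dim\bar R_\xi-1=1$, hence $\dim C=0$; and $C$, being a retract of the finitely generated $k$-algebra $\bar R_{(\xi)}$, is itself a finitely generated $k$-domain, so a finite field extension of $k$, whence $C=k$ and $\bar R_{(\xi)}\cong k^{[1]}$. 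Thus $\Spec\bar R_{(\xi)}$ is an affine open subset of the projective curve $\Proj\bar R$ that is isomorphic to $\A^1_k$, and by the characterization recalled in the footnote to \textrm{(c)}, $\Proj\bar R$ is a unicuspidal rational curve.

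I expect the main obstacle to lie in the second implication, specifically in making the passage from the abstract cylindrical element furnished by Theorem \ref{edh83yf6r79hvujhxu6wrefji9e} to the concrete geometric statement fully rigorous: one must check that $\bar R_{(\xi)}$ is a finitely generated $k$-algebra of Krull dimension one and that $\Proj\bar R$ really is a projective curve (for instance by invoking that some Veronese subalgebra of $\bar R$ is generated in degree one). By contrast, the first implication is routine once one has the idea of dividing $\delta$ by the largest power of $f$ that divides its image and invoking the Krull intersection theorem to guarantee that this process is finite.
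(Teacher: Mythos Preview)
Your proof is correct and follows essentially the same approach as the paper's: for (a)$\Rightarrow$(b) the paper replaces $\delta$ by an irreducible derivation while you divide out the highest power of $f$ via Krull intersection (a minor variant of the same idea), and for (b)$\Rightarrow$(c) both arguments invoke Theorem~\ref{edh83yf6r79hvujhxu6wrefji9e} to obtain a cylindrical element of $R/fR$. Your write-up supplies the details---the dimension count for $\bar R_{(\xi)}$ and the identification $C=k$---that the paper leaves implicit in the sentence ``It follows that $\Proj(R/fR)$ contains an open set isomorphic to $\A_k^1$.''
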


\begin{proof}
If (a) holds then we can choose $\delta$ to be irreducible; then $\delta$  induces a nonzero locally nilpotent derivation of $R/fR$, showing that (b) holds.
If (b) holds, and if we are given a grading satisfying the hypothesis of (c),
then {\it Theorem\,\ref{edh83yf6r79hvujhxu6wrefji9e}} implies that the graded ring $R/fR$ has a cylindrical element.
It follows that $\Proj(R/fR)$ contains an open set isomorphic to $\A_k^1$, from which (c) follows.
\end{proof}

\begin{proposition}  \label {8y23874t28746rdhjh} 
Let $R=k[x,y,z]=k^{[3]}$ with $\N$-grading $R=\bigoplus_{i\in \N}R_i$ where $x,y,z$ are homogeneous of positive degrees.
Let $f\in R$ be a homogeneous prime element such that $\Proj(R/fR )$ is not a unicuspidal rational curve.
\begin{itemize}
\item [{\bf (a)}] The quotient ring $R/(f-c)$ is rigid for every $c\in k$.
\item [{\bf (b)}] Given $\delta \in \lnd(R)$, if $\delta\ne 0$ then $\delta (f)\ne 0$.
\end{itemize}
\end{proposition}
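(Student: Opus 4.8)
The plan is to reduce everything to the single statement that $R/fR$ is rigid, which follows at once from Proposition~\ref{CriterionKernel}. Since $x,y,z$ have positive degrees we have $R_0=k$, so the given $\N$-grading (for which $f$ is homogeneous) is one of those considered in condition~(c) of that proposition. As $\Proj(R/fR)$ is assumed not to be a unicuspidal rational curve, condition~(c) fails, so by the contrapositive of the implication (b)$\Rightarrow$(c) condition~(b) fails too; that is, $R/fR$ is rigid. Part~(b) of the present proposition is then immediate from the contrapositive of (a)$\Rightarrow$(b): a nonzero $\delta\in\lnd(R)$ with $\delta(f)=0$ would make $R/fR$ non-rigid, so no such $\delta$ exists, i.e.\ $\delta(f)\ne 0$ for every nonzero $\delta\in\lnd(R)$. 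The case $c=0$ of part~(a) is likewise nothing but the rigidity of $R/(f-0)=R/fR$.

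The remaining case, $c\ne 0$ in part~(a), I would treat by a homogenization argument. Note that $m:=\deg f\ge 1$, since $f$ is prime and $R_0=k$ forces $f\notin k$. Put $A:=R/(f-c)$ and filter $A$ by letting $F_i$ be the image of $\bigoplus_{j\le i}R_j$ in $A$; because $R$ is $\N$-graded this is an exhaustive, separated ring filtration (in particular $F_{-1}=0$), and there is a canonical surjection of graded rings $R\to\operatorname{gr}(A)$ whose kernel is the initial ideal $\operatorname{in}\big((f-c)R\big)$. Since $m\ge 1$, the leading form of $h(f-c)$ equals $\operatorname{lf}(h)\cdot f$ for every nonzero $h\in R$, so $\operatorname{in}\big((f-c)R\big)=fR$ and hence $\operatorname{gr}(A)\cong R/fR$, a ring that is a domain (because $f$ is prime) and rigid by the previous paragraph.

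To conclude I would invoke the standard principle that a ring carrying an exhaustive, separated filtration whose associated graded ring is a rigid domain is itself rigid. This can be cited (e.g.\ from \cite{Freudenburg.17}) or argued directly: a nonzero $D\in\lnd(A)$ has a nonzero top-degree part $\operatorname{gr}(D):\operatorname{gr}(A)\to\operatorname{gr}(A)$, a homogeneous $k$-linear map which satisfies the Leibniz rule because $\operatorname{gr}(A)$ is a domain and which is locally nilpotent because, for any homogeneous $\bar b=\operatorname{lf}(b)$, one has $\operatorname{gr}(D)^n(\bar b)=0$ as soon as $D^n(b)=0$; thus $\operatorname{gr}(A)$ is non-rigid, a contradiction. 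Applied with $\operatorname{gr}(A)\cong R/fR$, this forces $A=R/(f-c)$ to be rigid, completing part~(a). The only points that need care are the leading-form computation giving $\operatorname{in}\big((f-c)R\big)=fR$ and the precise statement of the homogenization principle; I do not anticipate a genuine obstacle, the conceptual content being simply that the associated graded ring of the non-cone fibre $R/(f-c)$ is the cone fibre $R/fR$, so rigidity of the whole family of fibres is governed by the single ring $R/fR$ that Proposition~\ref{CriterionKernel} controls.
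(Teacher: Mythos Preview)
Your proof is correct. The reduction to rigidity of $R/fR$ via Proposition~\ref{CriterionKernel}, and the deduction of part~(b) and of the case $c=0$ of part~(a), are exactly what the paper does.

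For the case $c\neq 0$ of part~(a) the two arguments diverge. The paper simply invokes an external result, Proposition~10.5 of \cite{Daigle.ppt2023}, which states directly that rigidity of $R/fR$ forces rigidity of $R/(f-c)$ for every $c\in k$. You instead supply a self-contained homogenization argument: filter $A=R/(f-c)$ by the images of $R_{\le i}$, compute $\operatorname{gr}(A)\cong R/fR$ via the leading-form calculation $\operatorname{in}\big((f-c)R\big)=fR$, and then pass a nonzero $D\in\lnd(A)$ to a nonzero $\operatorname{gr}(D)\in\lnd(\operatorname{gr}(A))$. This is precisely the standard degree-function machinery (as in \cite{Freudenburg.17}); the only point needing a word of justification, which you flag, is that $D$ has finite filtration-degree, and this follows because $A$ is generated by the images of $x,y,z$. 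Your route is more elementary and self-contained, while the paper's citation is shorter but pushes the work into \cite{Daigle.ppt2023}; conceptually both express the same principle that the non-cone fibres degenerate to the cone fibre, so rigidity is controlled by $R/fR$.
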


\begin{proof}
Since $\Proj(R/fR)$  is not a unicuspidal rational curve, {\it Proposition\, \ref{CriterionKernel}} implies that $R/fR$ is a rigid domain and that (b) holds. 
Since $R/fR$ is rigid, Proposition 10.5 of \cite{Daigle.ppt2023} implies that $R/(f-c)$ is rigid for every $c \in k$, so (a) holds.
\end{proof}

For the next two results, 
let $R=\bigoplus_{d\in\N}R_d$ be the $\N$-grading of $R=k[x,y,z]\cong k^{[3]}$ where $x\in R_3$, $y\in R_2$ and $z\in R_1$.

\begin{proposition}\label{curve-singularities} 
If $f\in R_9$ is irreducible and $\deg_z(f)=1$ then the curve $C={\rm Proj}(R/fR)$ has exactly one singular point. 
\end{proposition}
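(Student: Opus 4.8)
The plan is to exploit the hypothesis $\deg_z(f) = 1$, which makes $C$ essentially the graph of a rational function over the weighted projective line $\PP(3,2) := \Proj k[x,y]$, where $k[x,y]$ is graded with $\deg x = 3$ and $\deg y = 2$ (so $\PP(3,2) \cong \PP^1$). A weighted-degree count gives $f = A + zB$ with $A = \alpha x^3 + \beta xy^3$ and $B = \epsilon x^2 y + \gamma y^4$ for scalars $\alpha,\beta,\epsilon,\gamma$; since $R$ is a UFD, irreducibility of $f$ just says $R/fR$ is a domain, so $C$ is an integral curve. The first step is to record three consequences of irreducibility: $\alpha \ne 0$ (otherwise $y \mid f$), $\gamma \ne 0$ (otherwise $x \mid f$), and $\alpha\gamma - \beta\epsilon \ne 0$ (otherwise, since $(\alpha,\beta) \ne (0,0)$, one gets $\epsilon x^2 + \gamma y^3 = \lambda(\alpha x^2 + \beta y^3)$ for some $\lambda \in k$, whence $f = (\alpha x^2 + \beta y^3)(x + \lambda zy)$).

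The second step is the elementary fact that $A = x(\alpha x^2 + \beta y^3)$ and $B = y(\epsilon x^2 + \gamma y^3)$ have no common zero in $\PP(3,2)$: running through the four cases coming from $A = 0$ and from $B = 0$, and using $\alpha \ne 0$, $\gamma \ne 0$, and invertibility of $\left(\begin{smallmatrix}\alpha & \beta\\ \epsilon & \gamma\end{smallmatrix}\right)$, each case forces $x = y = 0$. From this I would deduce that $[0:0:1]$ is the only point of $C$ at which $B$ vanishes: if $[x:y:z] \in C$ has $B(x,y) = 0$, then $0 = f = A(x,y) + zB(x,y) = A(x,y)$, so $(x,y)$ would be a common zero of $A$ and $B$ unless $(x,y) = (0,0)$; and $[0:0:1]$ does lie on $C$ and on the zero locus of $B$, since $A, B$ have vanishing constant term.

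The third step identifies the open set of $C$ where $B \ne 0$ --- equal, by the second step, to $C \setminus \{[0:0:1]\}$ --- as an open subscheme of $\PP^1$, hence a smooth curve. On $\{B \ne 0\}$ the relation $f = A + zB = 0$ lets one solve $z = -A/B$; concretely, $z \mapsto -A/B$ defines a grading-preserving isomorphism $(R/fR)_B \cong k[x,y]_B$ (the element $-A/B$ has degree $9 - 8 = 1$, like $z$), so passing to degree-zero parts and applying $\Spec$ exhibits this open set as isomorphic to the corresponding open subset of $\Proj k[x,y] \cong \PP^1$. Finally, the fourth step checks that $C$ genuinely is singular at $[0:0:1]$: in the smooth ambient chart $\{z \ne 0\} \cong \A^2 = \Spec k[u,v]$ with $u = x/z^3$ and $v = y/z^2$, the curve $C$ is cut out by $g := f/z^9 = \alpha u^3 + \epsilon u^2 v + \beta uv^3 + \gamma v^4$, which lies in $(u,v)^2$; since $g$ has no term of degree $\le 2$ (and its degree-$3$ part $\alpha u^3 + \epsilon u^2 v$ is nonzero because $\alpha \ne 0$), the origin $(0,0) = [0:0:1]$ is a singular point of $C$, of multiplicity $3$. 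Combining the third and fourth steps shows $[0:0:1]$ is the unique singular point of $C$.

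The only spot demanding some care is the weighted-projective bookkeeping in the third step: checking that the isomorphism $(R/fR)_B \cong k[x,y]_B$ respects the grading, and that $\Proj k[x,y]$ with weights $(3,2)$ is genuinely smooth (it is $\cong \PP^1$, being glued from the affine lines $\Spec k[y^3/x^2]$ and $\Spec k[x^2/y^3]$ by inversion). All the rest reduces to the elementary case analysis of the first two steps, which is precisely where the irreducibility hypothesis on $f$ is used.
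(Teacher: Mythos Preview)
Your proof is correct and takes a genuinely different route from the paper's.

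The paper normalizes $f$ to the form $y(ax^2+y^3)z+x(x^2+by^3)$ with $ab\neq1$, covers $C$ by the two charts $D_+(z)$ and $D_+(y)$, and then proceeds computationally: on $D_+(z)\cong\A^2$ it finds a degree-$4$ plane curve with a triple point at the origin (and uses the degree--multiplicity bound to see this is the only singularity in that chart); on $D_+(y)$ it explicitly realizes $C\cap D_+(y)$ as a curve in $\A^3$ and checks, via a $3\times3$ Jacobian, that the two points of $C\setminus D_+(z)$ are smooth.

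Your argument exploits the hypothesis $\deg_z(f)=1$ more directly. Writing $f=A+zB$ and observing (from irreducibility) that $A$ and $B$ share no zero on $\PP(3,2)$, you get that $C\setminus\{[0:0:1]\}$ coincides with $C\cap D_+(B)$, and the graded isomorphism $(R/fR)_B\cong k[x,y]_B$ immediately identifies this open set with an open subscheme of $\PP(3,2)\cong\PP^1$, hence smooth. The singularity at $[0:0:1]$ you handle the same way the paper does. Your approach avoids both the normalization step and the Jacobian computation in the singular ambient chart $D_+(y)$; it is shorter and more conceptual. The paper's computation has the minor side benefit of explicitly identifying the points of $C\setminus D_+(z)$ and confirming rationality of $C$ independently, but neither of these is needed for the proposition as stated.
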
 

\begin{proof}
Write $f=\alpha z+\beta$ for homogeneous $\alpha ,\beta\in k[x,y]$.
We see that $\deg\alpha =8$, so $\alpha$ is in the $k$-span of $x^2y$ and $y^4$, and that 
$\deg \beta =9$, so $\beta$ is in the $k$-span of $x^3$ and $xy^3$. Let $p,q,r,s\in k$ be such that $f=y(px^2+qy^3)z+x(rx^2+sy^3)$.
Since $f$ is irreducible we see that $q\ne 0$, $r\ne 0$ and $ps\ne qr$. 
Up to an automorphism of $R$ (as a graded $k$-algebra) and multiplication by an element of $k^*$, we may arrange that 
$$
f=y(ax^2+y^3)z+x(x^2+by^3) \qquad \text{for some $a,b \in k$ such that $a b \neq 1$.}
$$
We view $C$ as a subvariety of the weighted projective plane $\PP (3,2,1) = \Proj(R)$.
Note that $C \subseteq D_+(z) \cup D_+(y)$.

We have $D_+(z) = \Spec R_{(z)}$ and $R_{(z)} = k[u,v] = k^{[2]}$ where $u = x/z^3$ and $v = y/z^2$.
So $D_+(z)$ is an affine plane and the equation of $C \cap D_+(z)$ is
$v(au^2+v^3)+u(u^2+bv^3) = 0$, which we rewrite as
$$
(au^2v + u^3) + (v^4+buv^3) = 0 .
$$
This is a curve of degree $4$ with a singularity of multiplicity $3$ at the origin $O$ of this affine plane.
It follows that $C$ is a rational curve and that $O$ is the only singular point of $C$ in $D_+(z)$.
To show that $O$ is the only singular point of $C$, it suffices to show that each point of $C \setminus D_+(z)$ is a regular point of $C$.
We have $C \setminus D_+(z) = \{ [0:1:0] , [\sqrt{-b}:1:0] \}$.
(Note that $[{\sqrt {-b}}:1:0]$ and $[-{\sqrt {-b}}:1:0]$ are the same point of $\PP(3,2,1)$.)

We have $D_+(y) = \Spec R_{(y)}$ and $R_{(y)} = k[u,v,w]$ with $u = x^2/y^3$, $v = z^2/y$ and $w = xz/y^2$.
Since $f$ is prime in $R$ it is also prime in $R_y$, so $\pgoth = R_{(y)} \cap f R_y$ is a prime ideal of $R_{(y)}$.
Moreover,  $C \cap D_+(y)$ is the subset $V(\pgoth)$ of $\Spec( R_{(y)} )$.
One can see that $\pgoth$ is generated by the following two elements of $R_{(y)}$:
$$
\textstyle   \frac{xf}{y^6} = a u w + u^2 + bu + w , \quad \frac{zf}{y^5} = a u v + u w + v + b w .
$$
Since $R_{(y)} \cong k[U,V,W] / (UV-W^2)$ (where $k[U,V,W] = k^{[3]}$), we have 
$R_{(y)} / \pgoth \cong k[U,V,W] / \Pgoth$, where
$$
\Pgoth = ( a U W + U^2 + bU + W , a U V + U W + V + b W , \, UV-W^2) \subset k[U,V,W] .
$$
So the curve  $C \cap D_+(y)$ is isomorphic to $V(\Pgoth) \subset \mathbb{A}_k^3$.
The jacobian matrix of the three generators of $\Pgoth$ is the matrix $J$ displayed here:
\begin{equation}
J = \left(\begin{smallmatrix}
a W + 2 U + b  &  0  &  a U + 1   \\
a V + W  &  a U + 1 &  U + b  \\
V & U & -2W
\end{smallmatrix}\right) , \qquad
J_0 = \left(\begin{smallmatrix}
b  &  0  &  1   \\
0  &  1 &  b  \\
0 & 0 & 0
\end{smallmatrix}\right) , \qquad
J_1 = \left(\begin{smallmatrix}
-b  &  0  &  1-ab \\
0  & 1-ab &  0  \\
0 & -b & 0
\end{smallmatrix}\right) .
\end{equation}
The point $[0:1:0] \in D_+(y) \subset \PP(3,2,1)$ corresponds to the point $(U,V,W)=(0,0,0) \in \mathbb{A}_k^3$,
and $J$ evaluated at that point is $J_0$, which has rank $2$; so $[0:1:0]$ is a regular point of $C$.
One can see that the point $[{\sqrt {-b}}:1:0] \in D_+(y) \subset \PP(3,2,1)$ corresponds to $(U,V,W)=(-b,0,0) \in \mathbb{A}_k^3$, 
and $J$ evaluated at that point is $J_1$, which has rank $2$; so $[\sqrt{-b}:1:0]$ is a regular point of $C$.

Thus, $O$ is the only singular point of $C$.
\end{proof} 

\begin{lemma}\label{lemma4} Let $D\in {\rm LND}(R)$ be irreducible and homogeneous, where $\krn D=k[g,h]$ for $g\in R_3$ and $h\in R_4$. 
Then, up to a homogeneous change of coordinates of $R$, $\krn D=k[x,xz+y^2]$ and $y$ is a minimal local slice for $D$.
\end{lemma}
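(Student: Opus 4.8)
The plan is to normalize $g$ and $h$ by graded automorphisms of $R$ and then to determine $h$ exactly through a localization argument that recognizes a plane curve. Since conjugating $D$ by a graded automorphism preserves irreducibility, homogeneity, local nilpotency and the form of the conclusion, we may perform graded changes of coordinates freely. Because $\krn D = k[g,h]$ is factorially closed in $R$ and $g$ is a variable of this polynomial ring, $g$ is irreducible in $R$; writing $g = \alpha x + \beta yz + \gamma z^3 \in R_3$, the case $\alpha = 0$ gives $z \mid g$, which is impossible, so $\alpha \neq 0$ and, after the graded change of coordinates carrying $g$ to $x$, we may assume $g = x$, so that $Dx = 0$ and $\krn D = k[x,h]$ with $h \in R_4$ irreducible. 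Writing $h = a\,xz + b\,y^2 + c\,yz^2 + d\,z^4$: if $a = 0$ then $h$ is a binary quadratic form in $y$ and $z^2$, hence reducible over the algebraically closed field $k$, so $a \neq 0$; after rescaling $h$ we may take $a = 1$; and if $b = 0$ then $z \mid h$, again impossible, so $b \neq 0$.

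Next I would produce a convenient local slice. As $D \neq 0$, it has a minimal local slice $r$ (Proposition \ref{plinth}), and $Dr$ generates the plinth ideal $A \cap DR$, where $A = \krn D = k[x,h]$. By Lemma \ref{rank-two} this plinth ideal equals $f(x)\,k[x,h]$ for some nonzero $f(x) \in k[x]$; since $D$ is homogeneous the plinth ideal is a homogeneous ideal of $A$, and being a homogeneous principal ideal generated by an element of $k[x]$, it is generated by a power of $x$. Hence, after rescaling $r$, $Dr = x^\ell$ for some $\ell \in \N$, and Lemma \ref{slice} gives $R_x = k[x^{\pm1},h,r] \cong k[x^{\pm1}]^{[2]}$, with $h$ and $r$ algebraically independent over $k(x)$.

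The step I expect to be the main obstacle is to deduce from this that $c^2 = 4bd$. Localizing the preceding identity at $k[x]\setminus\{0\}$ and writing $K = k(x)$ yields $K[y,z] = K[h,r]$, a polynomial ring in which $h$ is a coordinate; therefore $K[y,z]/(h) \cong K^{[1]}$, so the affine plane curve $C = V(h) \subset \A^2_K$ is isomorphic to $\A^1_K$ and remains an affine line over an algebraic closure $\bar K$ of $K$. On the other hand, completing the square in $y$ identifies $C$ with the curve $(y')^2 = \Delta(z)$, where $y' = 2by + cz^2$ and $\Delta(z) = c^2z^4 - 4b(xz + dz^4) = (c^2 - 4bd)z^4 - 4bxz$. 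If $c^2 \neq 4bd$, then $\Delta(z) = (c^2 - 4bd)\,z\big(z^3 - \tfrac{4bx}{\,c^2-4bd\,}\big)$ is a squarefree polynomial of degree four (its root $0$ and the three cube roots of $4bx/(c^2-4bd)$ are nonzero and pairwise distinct because $x \neq 0$, even over $\bar K$), so $C_{\bar K}$ has a smooth projective model of genus one, contradicting $C_{\bar K} \cong \A^1_{\bar K}$. Hence $c^2 = 4bd$, so $h = xz + b\big(y + \tfrac{c}{2b}z^2\big)^2$, and a further graded change of coordinates gives $h = xz + y^2$, that is $\krn D = k[x, xz + y^2]$.

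Finally I would verify that $y$ is a minimal local slice for $D$. In these coordinates $D$ is irreducible with $\krn D = k[x, xz+y^2]$, so Proposition \ref{jacobian} gives $D = \lambda\big(2y\,\partial_z - x\,\partial_y\big)$ for some $\lambda \in k^*$; hence $Dy = -\lambda x \neq 0$ and $D^2 y = 0$, so $y$ is a local slice and $\deg D = 1 > 0$. The plinth ideal $A \cap DR$ is again a homogeneous principal ideal $\xi A$ with $\xi$ homogeneous, and it contains $Dy = -\lambda x$, so $\xi \mid x$ in $A$; since $x$ is a variable of $A$ it is irreducible there, and $D$ has no slice (a relation $Ds = 1$ is impossible, as $D$ has positive degree while $R$ is non-negatively graded), so $\xi$ is a unit multiple of $x$ and the plinth ideal equals $xA = (Dy)A$. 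By Proposition \ref{plinth}(b), $y$ is a minimal local slice for $D$, completing the argument.
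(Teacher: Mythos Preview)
Your argument is correct, but the key step (forcing $c^2=4bd$, i.e.\ that the quadratic part of $h$ in $y$ and $z^2$ is a perfect square) is obtained by a genuinely different route than the paper's.

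The paper factors $h=gz+\zeta(y+c_1z^2)(y+c_2z^2)$ and uses the $D$-degree directly: embedding $R$ in $K[t]=K^{[1]}$ with $K=\Frac(\ker D)$ via Lemma~\ref{slice}, the relation gives $\deg_t(z)=\deg_t(y+c_1z^2)+\deg_t(y+c_2z^2)$, hence $\deg_t(z)>\deg_t(y+c_iz^2)$ for each $i$, and subtracting forces $\deg_t(z)>\deg_t((c_1-c_2)z^2)$, so $c_1=c_2$. This is a two-line degree computation requiring no input beyond Lemma~\ref{slice}. Your approach instead invokes Lemma~\ref{rank-two} to pin the plinth ideal in $k[x]$, localizes to recognise $h$ as a coordinate of $k(x)[y,z]$, and then rules out $c^2\neq 4bd$ by a genus argument on the hyperelliptic model $(y')^2=(c^2-4bd)z^4-4bxz$. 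What the paper's route buys is brevity and self-containment; what yours buys is a conceptual explanation (the obstruction is genus) that would generalise more readily to higher-degree analogues, at the cost of appealing to Lemma~\ref{rank-two} and to the genus formula for $y^2=\Delta(z)$. For the final claim that $y$ is a minimal local slice the paper simply says ``easy to check''; your explicit verification via the plinth ideal and the positive degree of $D$ is fine and in fact spells out what the paper leaves implicit.
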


\begin{proof} 
Since $g$ is irreducible and belongs to $R_3=kx\oplus kyz\oplus kz^3$,
$g=x+ayz+bz^3$ for some $a,b\in k$.
Since $h$ is irreducible and belongs to $R_4=kxz\oplus ky^2\oplus kyz^2\oplus kz^4 =kgz\oplus ky^2\oplus kyz^2\oplus kz^4$, we have
$h=gz+\alpha y^2+\beta yz^2+\gamma z^4$ for some $\alpha\in k^*$ and $\beta ,\gamma\in k$. We can rewrite this as
\begin{equation}\label{simple}
\text{$h=gz+\zeta (y + c_1 z^2) (y + c_2 z^2)$ for some $\zeta \in k^*$ and $c_1,c_2 \in k$.}
\end{equation}
Let $A=k[g,h]$ and $K = \Frac(A)$, and let $t \in R$ be any local slice of $D$.
Then {\it Lemma\,\ref{slice}} gives $R \subset K[t] = K^{[1]}$, so we may consider the $t$-degree, $\deg_t(r)$, of any $r\in R$.
Since $\deg(g) , \deg(h) > 2$, we have $R_2 \cap A = \{0\}$ and hence $y + c_i z^2 \notin A$ for $i=1,2$.
Thus, $y+c_1 z^2$ and $y+c_2 z^2$ are nonconstant polynomials in $K[t]$.
Since $g,h \in K$, \eqref{simple} implies
\[
\deg_t(z)= \deg_t(y+ c_1 z^2) + \deg_t(y+ c_2 z^2) ,
\]
which itself implies that $\deg_t(z) > \deg_t(y+ c_i z^2)$ for $i=1,2$.
Thus, 
\[
\deg_t(z) > \deg_t\big( (y+ c_1 z^2) - (y+ c_2 z^2) \big) = \deg_t \big( (c_1-c_2) z^2 \big)
\]
and consequently $c_1=c_2$.
Therefore,
\[
\text{$h=gz+\zeta (y+ c z^2)^2$ for some $\zeta\in k^*$ and $c \in k$.}
\]
Since $R=k[g,y+cz^2,z]$, we may assume (after a homogeneous change of coordinates of $R$) that $\krn D=k[x,xz+y^2]$.
By {\it Proposition\,\ref{jacobian}}, there exists $\lambda \in k^*$ such that
$D = \lambda \left| \frac{\partial(g,h, \, \underline{\ \ } \, )}{\partial(x,y,z)} \right| = \lambda (2y \frac{\partial}{\partial z} - x \frac{\partial}{\partial y})$,
and it is easy to check that $y$ is a minimal local slice for this derivation.
\end{proof}


\section{Proofs of the main theorems}

Throughout this section, $k$ is an algebraically closed field of characteristic $0$.

The first two lemmas are preparation for the proof of {\it Theorem\,\ref{main1}}.

\begin{setup}  \label {h82736f6wd8n912win} 
Let $R=\bigoplus_{d\in\N}R_d$ be the $\N$-grading of $R=k[x,y,z]\cong k^{[3]}$ where $x\in R_3$, $y\in R_2$ and $z\in R_1$.
Let $f\in R_9$ satisfy conditions (C1)-(C3). 
Assume that $D \in \lnd(R)$ is irreducible, homogeneous, and that $D^2(f)=0$.
Let $g,h$ be homogeneous elements such that $\krn D=k[g,h]$ and $\deg g \le \deg h$ (such $g,h$ exist by {\it Proposition\,\ref{Daigle00result19}}).
Define $d =\deg g$ and $e = \deg h$. By {\it Proposition\,\ref{Daigle00result19}}, $\deg (D)=d+e-6$ and $\gcd(d,e)=1$.
Since $R_1=kz$, $d$ and $e$ cannot both be 1, so $d<e$.
\end{setup}

\begin{lemma} \label {corollary}
Let the assumptions be as in Setup \ref{h82736f6wd8n912win}.
\begin{itemize}
\item [{\bf (a)}] $f$ is a local slice for $D$.
\item [{\bf (b)}] $z\notin\krn D$
\item [{\bf (c)}] Up to multiplication by an element of $k^*$, $g$ is equal to one of
$$
\text{$y+az^2$ for some $a\in k$, \quad or \quad  $x+ayz+bz^3$ for some $a,b\in k$.}
$$
In particular, $g$ is a variable of $R$.
\item [{\bf (d)}] $\rank(D) = 2$
\item [{\bf (e)}] $2\le d\le 3$ and $e\ge 5$.
\end{itemize}
\end{lemma}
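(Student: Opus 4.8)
The plan is to prove the five assertions in the order (a), (b), (e), (c), (d), as (b) is used throughout and (e) is needed for (c) and (d). For (a): condition (C2) says $\Proj(R/fR)$ has at least two singular points, so it is not a unicuspidal rational curve (by the footnote definition such a curve has at most one singular point $P$, since $C\setminus\{P\}\cong\A^1_k$ is smooth). Since $f$ is prime (being irreducible in the UFD $R$) and homogeneous, Proposition~\ref{8y23874t28746rdhjh}(b) gives $Df\neq0$, which together with the hypothesis $D^2f=0$ is precisely the statement that $f$ is a local slice of $D$. In particular $f$ is a nonzero homogeneous element of $R$ annihilated by no nonzero element of $\lnd(R)$, so $f$ may play the role of ``$s$'' in Lemma~\ref{723476ed7dh0927gb}.

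For (b), suppose $z\in\krn D$. Since $\krn D=k[g,h]$ is graded with a nonzero degree-one component, degree considerations force $d=1$ and $g\in k^*z$; rescaling $g$, we get $\krn D=k[z,h]$ and $Dz=0$, so $D$ induces $\bar D\in\lnd(k[x,y])$. If $\bar D=0$ then $D(R)\subseteq zR$, contradicting the irreducibility of $D$, so $\bar D$ is a nonzero homogeneous locally nilpotent derivation of $k[x,y]$ for the grading $\deg x=3$, $\deg y=2$. Its kernel is $k[w]$ for a variable $w$ of $k[x,y]$ (Rentschler), which may be taken homogeneous since $\bar D$ is; and the only homogeneous variables for this grading are the scalar multiples of $x$ and of $y$ (a homogeneous variable cuts out an irreducible smooth $\G_m$-invariant curve in $\A^2_k$, whereas every other $\G_m$-invariant irreducible curve in $\A^2_k$ is reducible or has a cusp at the origin). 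Hence $\bar D=\lambda x^a\frac{\partial}{\partial y}$ or $\bar D=\lambda y^b\frac{\partial}{\partial x}$ for some $\lambda\in k^*$, and a direct computation with $\bar f:=f(x,y,0)=x(px^2+qy^3)$, where $p,q\in k^*$ by (C3), gives $\bar D^2\bar f\neq0$ in either case; this contradicts $\bar D^2\bar f=\overline{D^2f}=0$. Therefore $z\notin\krn D$.

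For (e), Lemma~\ref{723476ed7dh0927gb}(b) applied with $s=f$ (note $a+b+c=6\neq9=\deg f$) gives $d=\min(d,e)\leq\max\{2,\,9-6\}=3$, while (b) gives $d\geq2$ (if $d=1$ then $g\in R_1=kz\subseteq\krn D$). Since $d\in\{2,3\}$, $d<e$ and $\gcd(d,e)=1$, it remains to exclude $(d,e)=(2,3)$ and $(d,e)=(3,4)$. If $(d,e)=(2,3)$, then using $z\notin\krn D$ one normalizes $g=y+az^2$ and $h=x+byz+cz^3$; then $(h,g,z)$ is a graded coordinate system of $R$ of weights $3,2,1$ in which $\krn D=k[h,g]$, and since $\deg D=d+e-6=-1$ the derivation $D$ is a nonzero scalar multiple of $\partial/\partial z$ in these coordinates, so $D^2f=0$ forces $\deg_z f\leq1$ there. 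If $\deg_z f=0$ then $f\in\krn D$, contradicting (a); if $\deg_z f=1$ then Proposition~\ref{curve-singularities} (whose proof applies to any graded coordinate system of weights $3,2,1$) gives $\Proj(R/fR)$ exactly one singular point, contradicting (C2). If $(d,e)=(3,4)$, then $g\in R_3$ and $h\in R_4$, so Lemma~\ref{lemma4} applies: after a homogeneous change of coordinates $\krn D=k[x,\,xz+y^2]$ and $y$ is a minimal local slice, whence by Proposition~\ref{plinth}(d) the local slice $f$ has the form $\alpha y+\beta$ with $\alpha,\beta\in\krn D$ and $\alpha\neq0$; since $f$ is homogeneous of degree $9$, $\deg y=2$ and $\deg(xz+y^2)=4$, the only possibilities are $\alpha\in k^*x(xz+y^2)$ and $\beta\in kx^3$, so $f=x\big(\alpha'(xz+y^2)y+\beta'x^2\big)$ is divisible by $x$, contradicting (C1). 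Hence $e\geq5$. I expect this step---especially the $(3,4)$ case, which leans on Lemma~\ref{lemma4} and the description of local slices in Proposition~\ref{plinth}---to be the main obstacle.

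Finally, (c) follows from (e) and (b): $g$ is homogeneous of degree $d\in\{2,3\}$, and if its $y$-coefficient (when $d=2$) or its $x$-coefficient (when $d=3$) vanished, then $z$ would divide $g\in\krn D$, forcing $z\in\krn D$ against (b); rescaling then gives $g=y+az^2$ or $g=x+ayz+bz^3$, each of which is plainly a variable of $R$. For (d): $\rank D\geq1$ since $D\neq0$, and $\rank D\leq2$ since $Dg=0$ with $g$ a variable by (c). If $\rank D=1$, then $\krn D$ is generated by two members $v_1,v_2$ of a coordinate system $(v_1,v_2,v_3)$ of $R$, so $k[g,h]=k[v_1,v_2]$ and $(g,h,v_3)$ is again a coordinate system of $R$; by Proposition~\ref{jacobian} there is $\mu\in k^*$ with $D=\mu\left|\frac{\partial(g,h,\,\underline{\ \ }\,)}{\partial(x,y,z)}\right|$, whence $Dv_3=\mu\left|\frac{\partial(g,h,v_3)}{\partial(x,y,z)}\right|\in k^*$, i.e., $v_3$ is a slice for $D$. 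But $\deg D=d+e-6\geq1$ by (e), so $D$ strictly raises degrees and $1\in R_0$ cannot lie in $D(R)$; thus $D$ has no slice, a contradiction. Therefore $\rank D=2$.
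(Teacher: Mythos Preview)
Your proof is correct. It differs from the paper's in organization and in two local arguments, but the key ingredients (Lemma~\ref{723476ed7dh0927gb} for the bound $d\le3$, Lemma~\ref{lemma4} for the $(3,4)$ case, and Proposition~\ref{curve-singularities} to exploit (C2)) are the same.

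The paper proves the parts in the order (a),(b),(c),(d),(e). For (b) it uses Proposition~\ref{AB-theorem}: from $\bar D^2(px^2+qy^3)=0$ with $(a,b)=(2,3)\neq(2,2)$ one gets $\bar Dx=\bar Dy=0$ in one stroke. Your argument via Rentschler and the classification of homogeneous variables in $k[x,y]$ with weights $3,2$ is more hands-on but perfectly valid. For (d) the paper argues that if $\rank D=1$ then $D$ has a homogeneous slice $s\in R_t$, forces $t=1$ (else $R_1=0$), gets $R=k[g,h,z]$ and $\deg_z f=1$, and invokes Proposition~\ref{curve-singularities}; it then uses (d) to kill the $(2,3)$ case in (e). You instead handle the $(2,3)$ case directly inside (e) (normalizing to $D=\lambda\,\partial/\partial z$ and applying Proposition~\ref{curve-singularities} there), and then, having $e\ge5$ in hand, dispose of $\rank D=1$ by the clean degree obstruction $\deg D\ge1\Rightarrow 1\notin D(R)$. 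Both routes work; yours trades one application of Proposition~\ref{curve-singularities} for another and yields a slightly shorter (d), at the cost of having to note that Proposition~\ref{curve-singularities} is insensitive to the choice of graded coordinate system of weights $3,2,1$.
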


\begin{proof}
Part (a). By hypothesis, $D^2(f)=0$.
Since $\Proj(R/fR)$ has at least two singular points by (C2), it is not a unicuspidal rational curve.
So $Df\ne 0$ by {\it Proposition\,\ref{8y23874t28746rdhjh}}, i.e., $f$ is a local slice of $D$.

Part (b). If $Dz=0$ then $D$ induces a locally nilpotent derivation $\bar D : R/zR \to R/zR$,
and $\bar D \neq 0$ because $D$ is irreducible.
Modulo $z$, we have $\bar{f}=x(px^2+qy^3)$ and $\bar{D}^2\bar{f}=0$.
Since $\deg_{\bar D}(px^2+qy^3) \le \deg_{\bar D}(x(px^2+qy^3)) \le 1$, $\bar{D}^2(px^2+qy^3)=0$. By {\it Proposition\,\ref{AB-theorem}}, it follows that $\bar{D}x=\bar{D}y=0$, 
contradicting the fact that $\bar D \neq 0$.
Therefore, $Dz\ne 0$. 

Part (c). {\it Proposition\, \ref{8y23874t28746rdhjh}} implies that $\delta f\ne 0$ for all nonzero $\delta\in {\rm LND}(R)$. 
Therefore, since $\deg (f)\ne 6$, {\it Lemma\, \ref{723476ed7dh0927gb}} implies that:
$$
d=\min(d,e) \le \max\{ 2,\deg f-(3+2+1)\}=3
$$
We have
\[
R_1=kz\,\, ,\,\,R_2=ky\oplus kz^2\,\, ,\,\, R_3=kx\oplus kyz\oplus kz^3
\]
as $k$-modules. 
If $g\in R_1$ then $g =\lambda z$ for some $\lambda\in k^*$, which contradicts part (b). So this case cannot occur. 
Since $d \le 3$, we have $g \in R_2$ or $g \in R_3$.
If $g\in R_2$, write $g =\lambda y+\mu z^2$ for $\lambda ,\mu\in k$, and if 
$g\in R_3$, write $g =\lambda x+\mu yz+\nu z^3$ for $\lambda ,\mu ,\nu\in k$. Since $g$ is prime we see that $\lambda\ne 0$ in both cases.
Multiplying $g$ by $\lambda^{-1}$ gives assertion (c).

Part (d). By part (c), $\ker(D)$ contains a variable of $R$ so $\rank(D)\le2$.
Proceeding by contradiction, assume that $\rank(D)\neq2$.
Then  $\rank(D)=1$. Since $D$ is irreducible and homogeneous, there exists $t\ge1$ and $s\in R_t$ such that $Ds=1$.
By {\it Proposition\,\ref{slice}} it follows that $R = k[g,h,s]$. If $t\ge 2$ then $R_1=\{ 0\}$, a contradiction since $R_1=kz$. Therefore, $t=1$ and $R = k[g,h,z]$.
Since $z$ is a slice and $f$ is a local slice for $D$, it follows that $\deg_z(f)=1$ when $f$ is expressed in the coordinates $k[g,h,z]$.
But then {\it Proposition\,\ref{curve-singularities}} implies that $\Proj(R/fR)$ has only one singularity, contradicting (C2). 
So ${\rm rank}(D)=2$ and part (d) is proved.

Part (e). We have $\gcd(d,e)=1$, and $2 \le d \le 3$ by part (c). Therefore, either $e\ge 4$ or $(d,e)=(2,3)$. 
In the latter case we have $g=ry+cz^2$ and $h=sx+ayz+bz^3$ for $a,b,c,r,s\in k$.
If $r=0$ or $s=0$ then $Dz=0$, since $k[g,h]$ is factorially closed in $R$, which would contradict part (b). So $r,s\in k^*$.
But then $R=k[g,h,z]$, so ${\rm rank}(D)=1$, contradicting part (d). 
So $e\ge 4$.

Suppose that $e=4$. Since $d$ and $e$ are relatively prime, this means $(d,e)=(3,4)$. By {\it Lemma\,\ref{lemma4}}, there exists a homogeneous minimal local slice $r\in R_2$.
By {\it Proposition\,\ref{plinth}}, there exist homogeneous $\alpha ,\beta\in k[g,h]$ such that $f=\alpha r+\beta$. 
Then $\alpha\in R_7\cap k[g,h]=k \cdot gh$ and $\beta\in R_9\cap k[g,h]=k\cdot g^3$ implies $f\in gR$, contradicting irreducibility of $f$. 
Therefore, $e\ge 5$, and part (e) is proved.
\end{proof}

\begin{lemma}\label{lemma3} 
Let the assumptions be as in Setup \ref{h82736f6wd8n912win}.
\begin{itemize}
\item [{\bf (a)}] If $r$ is a homogeneous minimal local slice for $D$ then $Dr \in k[g]$ and there exist $s,t\in k^*$ such that $f=shr+tg^3$. 
\item [{\bf (b)}] $(d,e) \in \{ (3,5), (3,7) \}$.
\end{itemize}
\end{lemma}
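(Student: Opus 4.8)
The plan is to work entirely in the setting of Setup \ref{h82736f6wd8n912win} and exploit the structure of $\krn D = k[g,h]$ developed in Lemma \ref{corollary}. For part (a), I would start from Lemma \ref{corollary}(c), which tells us that $g$ is a variable of $R$ of degree $d\in\{2,3\}$, and from the fact (Lemma \ref{corollary}(a)) that $f$ is a local slice. Pick a homogeneous minimal local slice $r$; by Proposition \ref{plinth}(d) we may write $f=\alpha r+\beta$ with $\alpha,\beta\in A:=k[g,h]$ homogeneous and $\alpha\neq0$, so that $Df=\alpha\,Dr$. The first task is to pin down $Dr$. Since $r$ is a \emph{minimal} local slice, $Dr$ generates the plinth ideal of $D$ (Proposition \ref{plinth}(b)); I would argue that $Dr\in k[g]$, i.e. $Dr=\lambda g^i$ up to scalar. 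The key input here is Lemma \ref{723476ed7dh0927gb}(a) applied to $s=f$: it gives $Df\notin k[g]$ and $Df\notin k[h]$ — wait, that is the wrong direction, so instead I would use a degree/relatively-prime argument. We know $\gcd(d,e)=1$, and $Dr$ is homogeneous of degree $\deg(r)+\deg(D)=\deg(r)+d+e-6$; meanwhile $\deg(f)=9=\deg(\alpha)+\deg(r)$ forces $\deg(\alpha)=9-\deg(r)$. The constraint that $\alpha$ and $\beta$ lie in $k[g,h]$ in the available degrees, combined with $2\le d\le 3$ and $e\ge5$, should leave only the possibility $\alpha=sh$ (degree $e$) and $\beta=tg^{3}$ when $9=e+\deg(r)$ and $9=3d$ — hence $d=3$ and $\deg(r)=9-e$ — with $Dr$ then necessarily a power of $g$ since it must be homogeneous of degree $\deg(r)+d+e-6=6$ but lie in the plinth ideal; I would check directly that in $k[g,h]$ the only homogeneous elements of the relevant degree that can generate a plinth ideal are powers of $g$, using Lemma \ref{723476ed7dh0927gb}(a) to rule out the monomials $g^ih^j$ with $j>0$.

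For part (b), I would combine (a) with the numerical constraints already in hand. From (a) we have $f=shr+tg^{3}$, so $9=3d$ gives $d=3$; with $\gcd(d,e)=1$ and $e\ge5$ from Lemma \ref{corollary}(e), we get $e\in\{5,7,8,\dots\}$ with $e\not\in3\Z$. The degree of $r$ is $9-e$, and since $r$ is a local slice we need $\deg(r)\ge1$, forcing $e\le 8$; combined with $e\not\equiv0\pmod 3$ this leaves $e\in\{5,7,8\}$. To eliminate $e=8$: then $\deg(r)=1$, but $R_1=kz$, so $r\in kz$, which would put $z$ among the local slices and make $Dz\neq0$ with $\krn D=k[g,h]$ of the specific shape — I would derive a contradiction with $\rank(D)=2$ (Lemma \ref{corollary}(d)) exactly as in the proof of Lemma \ref{corollary}(d)/(e), since $r=z$ together with $\krn D=k[g,h]$ would give $R=k[g,h,z]=k[g,h,r]$ and hence $\rank(D)=1$. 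That leaves $(d,e)\in\{(3,5),(3,7)\}$.

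The main obstacle I anticipate is part (a): showing cleanly that $Dr\in k[g]$ and that the decomposition $f=\alpha r+\beta$ is forced into the exact shape $shr+tg^{3}$. The degree bookkeeping is delicate because I must simultaneously track which homogeneous elements of $k[g,h]$ exist in each degree (these depend on $d=3$ and $e$), and must invoke Lemma \ref{723476ed7dh0927gb}(a) at the right moment to exclude the "mixed" case where $Dr$ or $\alpha$ involves $h$ nontrivially together with the irreducibility of $f$ to exclude $f\in gR$ (the same trick used at the end of the proof of Lemma \ref{corollary}(e)). Everything else — the $\gcd$ constraint, the bound $e\le8$, the elimination of $e=8$ — is routine once (a) is in place.
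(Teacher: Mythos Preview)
Your overall shape is right, but there are two genuine gaps.

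\textbf{The key step $Dr\in k[g]$ is circular as written.} You want to first force $\alpha=sh$, $\beta=tg^3$, then compute $\deg(Dr)=6$, and finally observe that the only degree-$6$ monomial in $k[g,h]$ with $d=3$, $e\ge5$ is $g^2$. But you cannot get $\alpha=sh$ (in particular, you cannot rule out $\alpha\in k[g]$ or even $d=2$) without already knowing $Dr\in k[g]$. The paper's argument goes in the opposite order: since $g$ is a \emph{variable} of $R$ with $Dg=0$, Lemma~\ref{rank-two} says the plinth ideal is generated by an element of $k[g]$, so $Dr\in k[g]$ immediately. Then $Df=\alpha\,Dr$ together with $Df\notin k[g]$ (Lemma~\ref{723476ed7dh0927gb}(a)) forces $\alpha\notin k[g]$, hence $b>0$ in $\alpha=sg^ah^b$; irreducibility then gives $b'=0$, so $da'=9$ forces $d=3$, and the rest follows. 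Without Lemma~\ref{rank-two} the case $d=2$, $e=5$, $\alpha\in k^*$, $Dr=c_1g^5+c_2h^2$ survives your degree bookkeeping, and so does the case $d=3$, $\alpha\in k^*$, $Dr=\lambda g^2h$.

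\textbf{Your elimination of $e=8$ is wrong.} If $\deg(r)=1$ then $r=\kappa z$ is a \emph{local} slice, not a slice; $Dr=\mu g^2$ is not a unit, so you cannot conclude $R=k[g,h,z]$ and hence cannot derive $\rank(D)=1$. The paper instead uses condition (C3): if $r\in kz$ then $f(x,y,0)=tg(x,y,0)^3$ is a cube in $k[x,y]$, but (C3) says $f(x,y,0)=x(px^2+qy^3)$ with $p,q\in k^*$, which is not a cube. That is where (C3) enters the argument.
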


\begin{proof}
We have $Dg=0$ and, by {\it Lemma\,\ref{corollary}}, $g$ is a variable, $\rank(D)=2$, $d\in\{ 2,3\}$ and $e\ge 5$. 
We claim that
\begin{equation}  \label {uyrce3wlkpoi09zwxer}
Dr \in k[g] \quad \text{and} \quad Df \notin k[g].
\end{equation}
Indeed, {\it Lemma\,\ref{rank-two}} implies that the plinth ideal of $D$ is generated by a nonzero element of $k[g]$,
and Proposition \ref{plinth}(b) implies that $Dr$ generates the plinth ideal; so $Dr \in k[g]$.
The second part of \eqref{uyrce3wlkpoi09zwxer} follows from {\it Lemma\,\ref{723476ed7dh0927gb}(a)}.
Since $f$ is a local slice and $r$ is minimal, {\it Proposition\,\ref{plinth}} implies that there exist $\alpha ,\beta\in\krn D$ such that $f=\alpha r+\beta$. 
Since $f$ and $r$ are homogeneous, so are $\alpha$ and $\beta$.
If $\beta = 0$ then $\alpha \in k^*$ (because $f$ is irreducible) and hence $Df = \alpha Dr$ contradicts \eqref{uyrce3wlkpoi09zwxer}; so $\beta \neq 0$.
Since $k$ is algebraically closed, there exist factorizations
\[
\alpha = sg^ah^b\prod_{1\le i\le M}(s_ig^e+t_ih^d) \quad {\rm and}\quad
\beta = tg^{a'}h^{b'}\prod_{1\le j\le N}(s_i^{\prime}g^e+t_j^{\prime}h^d)
\]
where $a,b,a',b'\in\N$ and $s,t,s_i,t_i,s_j^{\prime},t_j^{\prime}\in k^*$.
By considering degrees we have:
\[
9=da+eb+Mde+\deg r=da'+eb'+Nde
\]
Since $d\ge 2$, $e\ge 5$ we have $de\ge 10$. Therefore, $M=N=0$ and
\[
f=sg^ah^br+tg^{a'}h^{b'} \quad \text{for some $a,b,a',b' \in \N$ and $s,t \in k^*$.}
\]
If $b=0$ then $Df = s g^a Dr$ contradicts  \eqref{uyrce3wlkpoi09zwxer}; so $b>0$.
Since $f$ is irreducible, $b'=0$; since $f$ is homogeneous, $a'>0$; since $f$ is irreducible, $a=0$.
The condition $d a' = 9$ implies that $d=3=a'$. Thus, $f=sh^br+tg^3$ where $b \ge 1$. 

If $\deg(r)=1$ then $r = \kappa z$ for some $\kappa\in k^*$. 
But then $f(x,y,0) = t g(x,y,0)^3$ is a cube, whereas (C3) implies that $f(x,y,0)$ is not a cube.
So $\deg(r)>1$.  Since $e\ge 5$ and $b e + \deg(r)=9$, we get $b=1$ and $e \le 7$.
Assertion (a) follows.
So $5 \le e \le 7$ (and $d=3$).
In view of the fact that $\gcd(d,e)=1$, part (b) follows.
\end{proof}

\begin{noname} {\bf Proof of {\it Theorem\,\ref{main1}}.}
Arguing by contradiction, suppose that $f\in R_9$ satisfies {\rm (C1)-(C3)} and $|f|_R < 2$.
Then there exists $D \in \lnd(R)$ such that $D \neq 0$ and $D^2(f)=0$.
It is well known that $D$ has a decomposition $D=\sum_{i=m}^nD_i$ for some $m\le n\in\N$,
where each $D_i$ is a homogeneous derivation of $R$ of degree $i$ and $D_n$ is nonzero and locally nilpotent;
see \cite{Freudenburg.17}, Proposition 3.8 and Principle 15.  Since $f$ is homogeneous, $D_n^2f=0$.
So we can choose $D$ to be homogeneous.
We can also choose $D$ to be irreducible, and by {\it Proposition\,\ref{Daigle00result19}} we can choose homogeneous $g,h$ such that $\ker(D) = k[g,h]$ and $\deg g \le \deg h$.
Let $d = \deg g$ and $e = \deg h$ (so all assumptions of Setup \ref{h82736f6wd8n912win} are satisfied).
By {\it Proposition\,\ref{plinth}}, there exists a minimal local slice $r$ of $D$; clearly, we can choose $r$ to be homogeneous.
By {\it Lemma\,\ref{lemma3}},
\[
(d,e) \in \{ (3,5), (3,7) \}  \quad \text{and} \quad f=shr+tg^3 \text{\ \ for some $s,t\in k^*$.} 
\]
Observe that $\deg (r)=9-e$. {\it Lemma\,\ref{lemma3}} also states that $Dr \in k[g]$, so we have $Dr =  \lambda g^i$ for some $\lambda \in k^*$ and $i \in \N$.
So {\it Proposition\,\ref{slice}}
implies that $R_g = (\krn D)_g[r] = k[g^{\pm1},r,h]$.
For any $j \in \N$, $g^j \frac{\partial}{\partial h} : k[g^{\pm1},r,h] \to k[g^{\pm1},r,h]$ is a locally nilpotent derivation of $R_g$ with kernel $k[g^{\pm1},r]$
and such that $(g^j \frac{\partial}{\partial h})^2(f) = 0$.
For a suitable choice of $j \in \N$, $g^j \frac{\partial}{\partial h}$ maps $R$ into itself.
For such a $j$, the restriction $\Delta : R \to R$ of  $g^j \frac{\partial}{\partial h}$  is a homogeneous locally nilpotent derivation of $R$ with
$\krn(\Delta) = k[g^{\pm1},r] \cap R$ and $\Delta^2(f)=0$.
Since $g$ and $r$ are homogeneous, and since $g$ is prime and $g \nmid r$ in $R$, we have $k[r] \cap gR = \{0\}$.
This implies that $k[g,r] \cap gR = gk[g,r]$, which in turn implies that $k[g^{\pm1},r] \cap R = k[g,r]$.  Thus, $\krn(\Delta) = k[g,r]$.
We have $\Delta = a \delta$ for some $a \in \ker(\Delta) \setminus \{0\}$ and some irreducible $\delta \in \lnd(R)$.
Then $\delta$ is homogeneous and satisfies $\ker(\delta) = k[g,r]$ and $\delta^2(f)=0$, so $\delta$ satisfies all assumptions of Setup \ref{h82736f6wd8n912win}.
Since $\deg g = 3$ and $\deg r = 9-e$, applying {\it Lemma\,\ref{lemma3}} to $\delta$ implies that 
one of the pairs $(3,9-e), (9-e,3)$ belongs to $\{(3,5), (3,7)\}$.
Since $e \in \{5,7\}$, this is impossible.
This completes the proof of  {\it Theorem\,\ref{main1}}. \hfill $\qed$
\end{noname}


\begin{noname} {\bf Proof of {\it Theorem\,\ref{main2}}.}
We have that $R=k^{[3]}$ is a $\Z$-graded UFD, $f\in R$ is a homogeneous prime, and $\deg f=9$. In addition, by {\it Theorem\,\ref{main1}}, we have $|f|_R\ge 2$.
{\it Proposition\,\ref{FMJ13}} implies that $R[W]/(W^n+f)$ is a rigid rational UFD. \hfill $\qed$
\end{noname}


\subsection*{Preparation for the proof of {\it Theorem\,\ref{main3}}}

As above, let $R=k[x,y,z]=k^{[3]}$ with $\N$-grading $R=\bigoplus_{i\in\N}R_i$ where $x\in R_3$, $y\in R_2$ and $z\in R_1$.
Let $\ggoth$ be this $\N$-grading and consider the subtorus $\Gamma = T(\ggoth)$ of $\Aut_k(R)$
(see paragraph \ref{7t65fr56gb12dwe8} and Definition \ref{lkjmnbv987y123erfpoiusdfv8gut}). Let $\CC(\Gamma)$ be the centralizer of $\Gamma$ in $\Aut_k(R)$.

\begin{lemma}\label{centralizer1} Let $h\in R_9$ satisfy conditions {\rm (C3)} and {\rm (C4)}. 
If $\alpha\in\mathcal{C}(\Gamma)$ and $\alpha (h)=h$ then $\alpha\in \Gamma$. 
\end{lemma}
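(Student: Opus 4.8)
The plan is to use the description of $\CC(\Gamma)$ given by \eqref{mnbkjhgertypoi345098asdb7h2wsgt}: an automorphism $\alpha \in \CC(\Gamma)$ is exactly one that preserves every graded piece $R_i$. So $\alpha$ acts linearly on each of $R_1 = kz$, $R_2 = ky \oplus kz^2$, $R_3 = kx \oplus kyz \oplus kz^3$. First I would write $\alpha(z) = \lambda z$ for some $\lambda \in k^*$, then $\alpha(y) = \mu y + \nu z^2$ for some $\mu \in k^*$ and $\nu \in k$ (note $\mu \neq 0$ since $\alpha$ is an automorphism), and $\alpha(x) = \rho x + \sigma yz + \tau z^3$ for some $\rho \in k^*$ and $\sigma, \tau \in k$. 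The goal is to show that the conditions (C3), (C4) on $h$ together with $\alpha(h) = h$ force $\nu = \sigma = \tau = 0$, so that $\alpha = \phi_t$ for $t = (\rho, \mu, \lambda)$, i.e.\ $\alpha \in \Gamma$.

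The key computation is to expand $\alpha(h) = h$ and read off constraints by comparing coefficients, exploiting (C3) and (C4) which pin down the low-order behavior of $h$ near $z = 0$. Setting $z = 0$ in $\alpha(h) = h$: the image $\alpha(h)$ restricted to $z=0$ is obtained by substituting $x \mapsto \rho x$, $y \mapsto \mu y$ into $h(x,y,0)$, because all the $z^2$, $yz$, $z^3$ correction terms vanish modulo $z$. By (C3), $h(x,y,0) = x(px^2 + qy^3)$ with $p, q \in k^*$, so $\alpha(h)(x,y,0) = \rho x(p\rho^2 x^2 + q\mu^3 y^3)$; equating with $h(x,y,0) = x(px^2+qy^3)$ gives $\rho^3 = 1$ and $\rho\mu^3 = 1$. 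Next I would differentiate $\alpha(h) = h$ with respect to $z$ and evaluate at $z = 0$, or equivalently extract the linear-in-$z$ part; here the correction terms $\nu z^2$ (in $\alpha(y)$) and $\sigma yz$, $\tau z^3$ (in $\alpha(x)$) contribute, and one uses (C4), $h_z(x,y,0) = y(rx^2 + sy^3)$ with $\det\left(\begin{smallmatrix} 3p & r \\ q & s \end{smallmatrix}\right) \neq 0$. This should yield a second batch of linear relations among $\nu, \sigma, \tau$ (and $\lambda, \mu, \rho$); the nondegeneracy of that determinant is precisely what is needed to conclude $\nu = \sigma = \tau = 0$.

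The main obstacle I anticipate is the bookkeeping in this second step: carefully computing the coefficient of $z$ in $\alpha(h)$ as a function of the chain-rule contributions from $\alpha(x)$, $\alpha(y)$, $\alpha(z)$, and then isolating the part that is forced to vanish. Concretely, writing $h = h_0(x,y) + z\, h_1(x,y) + z^2(\cdots) + \cdots$ with $h_0 = x(px^2+qy^3)$ and $h_1 = y(rx^2+sy^3)$, the linear-in-$z$ term of $\alpha(h) - h$ will be a homogeneous polynomial in $x,y$ (of weighted degree $8$, living in the span of $x^2 y$ and $y^4$) whose coefficients are linear combinations of $\nu$, $\sigma$, $\tau$ with coefficients involving $p, q$; setting this to zero and using $\det\left(\begin{smallmatrix} 3p & r \\ q & s \end{smallmatrix}\right) \neq 0$ gives a linear system in $\nu, \sigma, \tau$ that I expect to have only the trivial solution (possibly after also using the relations $\rho^3 = 1 = \rho\mu^3$ from the first step, and perhaps one more relation extracted from the $z^2$ coefficient if three equations are not immediately available from the $z^1$ coefficient alone). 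Once $\nu = \sigma = \tau = 0$ is established, $\alpha$ is diagonal in the coordinates $x, y, z$, hence $\alpha = \phi_t \in \Gamma$, completing the proof.
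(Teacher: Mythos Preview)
Your plan is essentially the paper's proof: write $\alpha$ in triangular form on $R_1,R_2,R_3$, expand $h$ as $\sum_i h_i(x,y) z^i$, and compare coefficients of low powers of $z$ in $\alpha(h)=h$. Two points to tighten. First, at the $z^1$ level only $\sigma$ enters (the corrections $\nu z^2$ and $\tau z^3$ contribute only from $z^2$ and $z^3$ onward); the two equations coming from the coefficients of $x^2y$ and $y^4$ give, via the determinant condition in (C4), exactly $\sigma=0$ together with the scalar relation $\mu\lambda=\rho$. You then need the $z^2$ coefficient to kill $\nu$ and the $z^3$ coefficient to kill $\tau$, so plan on going up to $z^3$, not just ``possibly $z^2$''. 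Second, your last sentence is not quite right: $\Gamma=T(\ggoth)$ for a $\Z$-grading, so $\Gamma\cong k^*$ and its elements act by $x\mapsto t^3x$, $y\mapsto t^2y$, $z\mapsto tz$; merely being diagonal does not place $\alpha$ in $\Gamma$. You must use the scalar relations you have accumulated ($\rho^3=1$, $\rho\mu^3=1$, $\mu\lambda=\rho$) to check that $\rho=\lambda^3$ and $\mu=\lambda^2$, which is a short computation.
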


\begin{proof}
By \eqref{mnbkjhgertypoi345098asdb7h2wsgt}, the hypothesis $\alpha\in\CC(\Gamma)$ implies that $\alpha(R_i) = R_i$ for all $i \in \N$, so
\[
\alpha (x)\in R_3=kx\oplus kyz\oplus kz^3\,\, ,\,\, \alpha (y)\in R_2=ky\oplus kz^2\,\, ,\,\, \alpha (z)\in R_1=kz
\]
Let $\kappa ,\lambda ,\mu\in k^*$ and $a,b,c\in k$ be such that:
\[
\alpha(x) = \kappa x+ayz+bz^3 , \quad
\alpha(y) = \lambda y+cz^2 , \quad
\alpha(z) = \mu z .
\]
Write $h=\sum_{0\le i\le 9}h_iz^i$ for homogeneous $h_i\in k[x,y]$. Observe that:
\[
h_0\in kx^3\oplus kxy^3 \,\, ,\,\, h_1\in kx^2y\oplus ky^4 \,\, ,\,\, h_2\in kxy^2\,\, ,\,\, h_3\in kx^2\oplus ky^3
\]
We have $h_0=x(px^2+qy^3)$ for $p,q\in k^*$ and $h_1=y(rx^2+sy^3)$ for $r,s\in k$ where $\det\left(\begin{smallmatrix} 3p & r\cr q & s \end{smallmatrix}\right)\ne 0$.
Let $t,u,v\in k$ be such that $h_2=txy^2$ and $h_3=ux^2+vy^3$.
Let $h_i'(x,y) \in k[x,y]$ denote the coefficient of $z^i$ in $\alpha(h) = \sum_{i=0}^9 h_i( \alpha(x), \alpha(y) ) \alpha(z)^i$.
Since $\alpha(h)=h$, we have $h_i'(x,y) = h_i(x,y)$ for all $i=0,\dots,9$.
The following are the equalities  $h_i'(x,y) = h_i(x,y)$ for $i=0,1,2,3$:
\begin{gather*}
(\kappa^3p)x^3+(\kappa\lambda^3q)xy^3  =px^3+qxy^3 , \\
(\kappa^2\lambda\mu r+3a\kappa^2p)x^2y+(\lambda^4\mu s+a\lambda^3)qy^4  = rx^2y+sy^4 , \\
(\kappa\lambda^2\mu^2t+2a\kappa\lambda\mu r+3c\kappa\lambda^2q+3a^2\kappa p)xy^2  =txy^2 ,  \\
\begin{split}
(\kappa^2\mu^3u+c\kappa^2\mu r+3b\kappa^2p)x^2+(\lambda^3\mu^3v+a\lambda^2\mu^2t+4c\lambda^3\mu s+a^2\lambda\mu r+3ac\lambda^2q+b & \lambda^3q+a^3p)y^3 \\ &= ux^2+vy^3 .
\end{split}
\end{gather*}
The first of these equalities gives $\kappa^3=\kappa\lambda^3=1$. From the second, we find that:
\[
\kappa^2\lambda\mu r+3a\kappa^2p=r \quad {\rm and}\quad \lambda^4\mu s+a\lambda^3 q =s .
\]
Multiplying each by $\kappa$ gives $\lambda\mu r+3ap=\kappa r$ and $\lambda\mu s+aq=\kappa s$, which yields:
\[
\begin{pmatrix} 3p & r\cr q&s\end{pmatrix} \begin{pmatrix} a \\ \lambda\mu -\kappa \end{pmatrix}
= \begin{pmatrix} 0\cr 0\end{pmatrix}
\]
Since the determinant of the square matrix is nonzero, we conclude that $a=0$ and $\lambda\mu=\kappa$. 
Setting $a=0$ in the third equality gives $\kappa\lambda^2\mu^2t+3c\kappa\lambda^2q=t$. Since $\kappa\lambda^2\mu^2=1$ we see that $c=0$. 
Setting $a=0=c$ in the fourth equality gives $\lambda^3\mu^3v+b\lambda^3q=v$. Since $\lambda^3\mu^3=1$ we see that $b=0$.

Therefore, $\alpha(x) =\kappa x$, $\alpha(y) = \lambda y$ and $\alpha(z) =\mu z$. Since $\kappa^3=1$, $\lambda^3=\kappa^2$ and $\lambda\mu =\kappa$, it follows that
$\mu^2=\kappa^2\lambda^{-2}=\lambda^3\lambda^{-2}=\lambda$ and $\mu^3=\kappa^3\lambda^{-3}=\kappa^{-2}=\kappa$, so
\[
\alpha(x) =\mu^3 x, \quad \alpha(y) = \mu^2 y, \quad \alpha(z) =\mu z ,
\]
which means that $\alpha \in \Gamma$.
\end{proof}

Next, let $f\in R_9$ satisfy conditions (C1)-(C4). 
Given $n\in\Z$ with $n\ge 2$ and $n\notin 3\Z$ define
\[
A=R[W]/(W^n-f)=R[w]
\]
where $R[W]=R^{[1]}$. By {\it Theorem\,\ref{main2}}, $A$ is a rigid rational affine UFD. 

The symbol $\mathfrak{g}$ continues to denote the aforementioned $\N$-grading of $R$.
Let $\Ggoth$ denote the $\N$-grading of $A$, $A=\bigoplus_{i\in\N}A_i$, obtained by
declaring that $x \in A_{3n}$, $y \in A_{2n}$, $z \in A_{n}$ and $w \in A_{9}$.
Consider the subtorus $T = T(\Ggoth)$ of $\Aut_k(A)$ and let $\mathcal{C}(T)$ be the centralizer of $T$ in ${\rm Aut}_k(A)$. 
Since $T$ is abelian, $T\subseteq\mathcal{C}(T)$.

\begin{lemma}\label{centralizer2} $\mathcal{C}(T)=T$
\end{lemma}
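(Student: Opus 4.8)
The plan is to reduce the statement about $\CC(T) \subseteq \Aut_k(A)$ to the already-proven statement about $\CC(\Gamma) \subseteq \Aut_k(R)$ via Lemma \ref{centralizer1}. Let $\alpha \in \CC(T)$. First I would use \eqref{mnbkjhgertypoi345098asdb7h2wsgt} to conclude that $\alpha(A_i) = A_i$ for every $i \in \N$. In particular $\alpha(A_9) = A_9$; since $w \in A_9$ and the low-degree part of the grading $\Ggoth$ is controlled ($A_n = kz$, $A_{2n} = ky \oplus kz^2$, $A_{3n} = kx \oplus kyz \oplus kz^3$, and because $n \not\in 3\Z$ the degrees $n, 2n, 3n$ are all distinct from $9$ and from each other in the relevant range), I would argue that $\alpha(w) = \eta w + \rho$ where $\eta \in k^*$ and $\rho \in R_9$ lies in the span of the monomials of $\ggoth$-degree $9$ that occur; more precisely $\alpha$ preserves the subring $R = k[x,y,z] \subseteq A$ (each of $x,y,z$ has $\Ggoth$-degree a multiple of $n$, and $A_{in} \cap R = R_i$, so $\alpha$ restricts to an automorphism $\beta$ of $R$ that preserves the $\ggoth$-grading, i.e.\ $\beta \in \CC(\Gamma)$).

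Next I would exploit the relation $w^n = f$. Applying $\alpha$ gives $(\eta w + \rho)^n = \alpha(f) = \beta(f)$, an identity in $A = R[w]$ with $R[W] = R^{[1]}$. Expanding the left side in powers of $w$ and comparing the coefficient of $w^n$ forces $\eta^n = 1$, i.e.\ $\eta \in k^*$ is an $n$-th root of unity; comparing the coefficient of $w^{n-1}$ gives $n \eta^{n-1} \rho = 0$, hence $\rho = 0$ (here characteristic zero is used), so $\alpha(w) = \eta w$. The constant term then yields $\beta(f) = \eta^n f \cdot (\text{wait})$ — more carefully, with $\rho = 0$ we get $\eta^n w^n = \beta(f)$, i.e.\ $\beta(f) = \eta^n f = f$ since $w^n = f$ and $\eta^n = 1$. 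Thus $\beta \in \CC(\Gamma)$ with $\beta(f) = f$, and $f$ satisfies (C3)–(C4) by hypothesis, so Lemma \ref{centralizer1} gives $\beta \in \Gamma$; write $\beta = \phi_\mu$ with $\beta(x) = \mu^3 x$, $\beta(y) = \mu^2 y$, $\beta(z) = \mu z$ for some $\mu \in k^*$.

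Finally I would identify the pair $(\mu, \eta)$ with an element of $T$. Since $T = T(\Ggoth)$ consists of the automorphisms rescaling the generators by $(\lambda^{3n}, \lambda^{2n}, \lambda^n, \lambda^9)$ for $\lambda \in k^*$ (acting on $x,y,z,w$ respectively), I must check that there exists $\lambda \in k^*$ with $\lambda^n = \mu$ and $\lambda^9 = \eta$; the constraint is consistent because the relation $w^n = f$ already forces $\eta^n = \mu^9$ (apply $\alpha$ to $w^n = f$: left side scales by $\eta^n$, right side by the $\ggoth$-degree-$9$ scaling $\mu^9$), so choosing $\lambda$ with $\lambda^n = \mu$ and adjusting by an $n$-th root of unity (using $\gcd$ considerations, or just that $k$ is algebraically closed and the system $\lambda^n = \mu$, $\lambda^9 = \eta$ with $\eta^n = \mu^9$ is solvable) pins down $\alpha = \phi_\lambda \in T$. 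The main obstacle I anticipate is the bookkeeping in this last step: showing the system $\lambda^n=\mu,\ \lambda^9=\eta$ is solvable given only $\eta^n=\mu^9$ (and $\eta^n = 1$), which should follow since $n \not\in 3\Z$ implies $\gcd(n,9) \in \{1,3\}$ and $\gcd(n,9)=3$ forces $3 \mid n$ — so in fact $\gcd(n,9)=1$, making the solvability immediate via Bézout. Assembling these gives $\CC(T) \subseteq T$, and $T \subseteq \CC(T)$ is noted already, so $\CC(T) = T$.
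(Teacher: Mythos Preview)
Your overall strategy matches the paper's: restrict $\alpha$ to $R$, apply Lemma~\ref{centralizer1}, then solve for $\lambda$ using $\gcd(n,9)=1$. However, there is a genuine gap at the step where you assert $\eta^n = 1$.

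First a minor point: since $\gcd(n,9)=1$ (as you correctly note at the end), the $\Ggoth$-grading satisfies $A_{ni} = R_i$ for every $i$, and in particular $A_9 = kw$. So $\alpha(w) \in A_9 = kw$ gives $\alpha(w) = \eta w$ immediately; there is no $\rho$-term to eliminate. Your proposed $\rho \in R_9$ would lie in $A_{9n}$, not in $A_9$.

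The real problem is the claim that ``comparing the coefficient of $w^n$'' forces $\eta^n = 1$. In $A$ there is no separate $w^n$-coefficient: the relation $w^n = f$ collapses $(\eta w)^n$ to $\eta^n f \in R$, so the identity $\alpha(w^n) = \beta(f)$ yields only $\beta(f) = \eta^n f$, not $\eta^n = 1$. Indeed $\eta^n = 1$ is false in general: for $\alpha = \phi_\lambda \in T$ one has $\eta = \lambda^9$ and $\eta^n = \lambda^{9n}$, which ranges over all of $k^*$. But Lemma~\ref{centralizer1} requires $\beta(f) = f$ exactly, not $\beta(f) = cf$, so you cannot invoke it at this point.

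The paper repairs this by normalizing at the outset: since $A_9 = kw$, there exists $\tau \in T$ with $\tau\alpha(w) = w$, and it suffices to show $\tau\alpha \in T$; so one may assume $\alpha(w)=w$ (i.e.\ $\eta = 1$) from the start. Then $\beta(f) = f$, Lemma~\ref{centralizer1} applies to give $\beta \in \Gamma$, and the remainder of the argument (producing $\lambda$ with $\lambda^n=\mu$ and $\lambda^9=\eta$ via the bijectivity of $\zeta\mapsto\zeta^9$ on $n$-th roots of unity, exactly as you outline) goes through. An equivalent fix on the $R$-side: choose $\gamma = \phi_\nu \in \Gamma$ with $\nu^9 = \eta^{-n}$, apply Lemma~\ref{centralizer1} to $\gamma\beta$, and conclude $\beta \in \Gamma$.
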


\begin{proof} Let $\alpha\in\mathcal{C}(T)$. By \eqref{mnbkjhgertypoi345098asdb7h2wsgt}, we have $\alpha(A_i) = A_i$ for all $i \in \N$.
Since $A_9=kw$, there exists $\tau\in T$ so that $\tau\alpha (w)=w$. 
Note that $\tau\alpha\in\mathcal{C}(T)$ and that it suffices to show that $\tau\alpha \in T$.
So we may assume, with no loss of generality, that $\alpha (w)=w$.
Observe that $R_i = A_{ni}$ for all $i \in \N$.
So $\alpha(R) = R$ and we may consider $\alpha\vert_R \in \Aut_k(R)$.
Since $\alpha\vert_R(R_i) = R_i$ for all $i \in \N$, we have $\alpha\vert_R\in\mathcal{C}(\Gamma)$ by \eqref{mnbkjhgertypoi345098asdb7h2wsgt}.
Since $f=w^n =\alpha (w)^n =\alpha (f)$, it follows by {\it Lemma\,\ref{centralizer1}} that $\alpha\vert_R\in\Gamma$. 
So there exists $\mu \in k^*$ such that $\alpha(u) = \mu^i u$ for all $u \in R_i$ and all $i \in \N$ (in particular, $f = \alpha(f) = \mu^9 f$, so $\mu^9=1$).
Pick $\lambda_1 \in k^*$ such that $\lambda_1^n = \mu$; then  $\alpha(u) = \lambda_1^{ni} u$ for all $u \in A_{ni}$ and all $i \in \N$.
Observe that 
\[
\left( \frac{ \alpha(w) }{ \lambda_1^9 w } \right)^n
= \frac{ \alpha(w^n) }{ \lambda_1^{9n} w^n } 
= \frac{ \alpha(f) }{ \mu^9 f } = 1
\]
so $\frac{ \alpha(w) }{ \lambda_1^9 w } \in \Omega_n$ where $\Omega_n = \setspec{ \zeta \in k^* }{ \zeta^n=1 }$.
Since $\zeta \mapsto \zeta^9$ is a bijection $\Omega_n \to \Omega_n$, there exists $\zeta \in \Omega_n$ such that $\zeta^9 = \frac{ \alpha(w) }{ \lambda_1^9 w }$.
Let $\lambda = \zeta \lambda_1$. Then
$$
\text{$\alpha(w) = \lambda^9 w$, and for every $i \in \N$ and $u \in A_{ni}$, $\alpha(u) = \lambda^{ni} u$.}
$$
Since $A = R[w]$, it follows that $\alpha(u) = \lambda^j u$ for all $u \in A_j$ and $j \in \N$, i.e., $\alpha \in T$.
\end{proof}

\begin{noname} {\bf Proof of {\it Theorem\,\ref{main3}}.}
We have $\CC(T)=T$ by Lemma \ref{centralizer2}.
Since $A$ is rigid, Corollary \ref{5cFt2xwdlk12pwo09284x2ednBb7w36gt1} implies that $\Aut_k(A) = T$.  \hfill $\qed$
\end{noname}

\begin{noname} \label {Proofmain4}  {\bf Proof of {\it Corollary\,\ref{main4}}.}
The following assertion immediately follows from the corollary stated in the introduction of \cite{Kurth.97}:
\begin{equation}  \label {jhd827u36erfwq8hx}
\textit{Assume that $k = \C$ and let $n\ge1$.  If $\Aut_\C( Q_n ) \cong \C^*$ then  $\Aut_{\SL_2}( V_n ) \cong \C^*$.}
\end{equation}
By \eqref{jhd827u36erfwq8hx} and Corollary \ref{cor1}, we obtain that if $k=\C$ then $\Aut_{\SL_2}( V_5 ) \cong \C^*$.
Let us now show that this remains true without the assumption $k=\C$.
We will write ``$k \in \ACF$'' as an abbreviation for ``$k$ is an algebraically closed field of characteristic zero''.
If $k \in \ACF$, let $V_n^{(k)} \subset k[x,y]$ be the $k$-vector space of homogeneous polynomials of degree $n$
and consider the left-action (defined in the introduction) of $\SL_2(k)$ on the variety $V_n^{(k)} \cong \A^{n+1}_k$. 
We know that  $\Aut_{\SL_2(\C)}( V_n^{(\C)} ) \cong \C^*$ is true for $1 \le n \le 5$.
Let us prove the following claim:
\begin{equation}  \label {udb998ub239dn28ho}
\begin{minipage}[t]{.70\textwidth}
\it
If $n\ge1$ is such that $\Aut_{\SL_2(\C)}( V_n^{(\C)} ) \cong \C^*$,
then $\Aut_{\SL_2(k)}( V_n^{(k)} ) \cong k^*$ for all $k \in \ACF$.
\end{minipage}
\end{equation}
Indeed, let $k \in \ACF$. The left-action of $\SL_2(k)$ on the variety $V_n^{(k)}$ induces
a right-action of $\SL_2(k)$ on the coordinate algebra $k[ V_n^{(k)} ] \cong k^{[n+1]}$,
and we have an anti-isomorphism of groups $\Aut_{\SL_2(k)}( V_n^{(k)} ) \to \Aut_{\SL_2(k)}( k[V_n^{(k)}] )$, $\varphi \mapsto \varphi^*$.
Let $\ggoth_k$ denote the standard $\N$-grading of $k[V_n]$ and consider the subtorus $T(\ggoth_k) \cong k^*$ of $\Aut_k( k[V_n^{(k)}] )$.
Then  $T(\ggoth_k) \subseteq \Aut_{\SL_2(k)}( k[V_n^{(k)}] )$, and consequently
\begin{equation} \label {o8ih2g73trgefd}
\Aut_{\SL_2(k)}( V_n^{(k)} ) \ncong k^* \iff  \text{the inclusion $T(\ggoth_k) \subset \Aut_{\SL_2(k)}( k[V_n^{(k)}] )$ is strict.}
\end{equation}
Also, it is an easy exercise to show:
\begin{equation} \label {6c5exs3wdfex1g2h8wd}
\begin{minipage}{.8\textwidth}
Given $\phi \in \Aut_k( k[V_n^{(k)}] )$, we have $\phi \notin T(\ggoth_k)$ if and only if there exists a homogeneous $f \in k[ V_n^{(k)} ]$
such that $\phi(f) \notin k f$.
\end{minipage}
\end{equation}
Fix $n\ge1$ and let $\Kscr(n)$ be the class of fields $k \in \ACF$ satisfying  $\Aut_{\SL_2(k)}( V_n^{(k)} ) \ncong k^*$.
To prove \eqref{udb998ub239dn28ho}, it suffices to show that if $\Kscr(n) \neq \emptyset$ then $\C \in \Kscr(n)$.
Consider $k \in \Kscr(n)$.
By \eqref{o8ih2g73trgefd}, there exists $\phi \in \Aut_{\SL_2(k)}( k[V_n^{(k)}] ) \setminus T(\ggoth_k)$;
by \eqref{6c5exs3wdfex1g2h8wd},  there exists a homogeneous $f \in k[ V_n^{(k)} ]$ such that $\phi(f) \notin k f$.
Choose $k_0 \in \ACF$ such that $k_0 \subseteq k$, $\trdeg_\Q(k_0) < \infty$, $\phi$ restricts to an automorphism $\phi_0$ of $k_0[ V_n^{(k_0)} ]$,
and $f \in k_0[ V_n^{(k_0)} ]$. Since $\phi$ is $\SL_2(k)$-equivariant, $\phi_0$ is $\SL_2(k_0)$-equivariant, i.e., $\phi_0 \in \Aut_{\SL_2(k_0)}( k[V_n^{(k_0)}] )$.
Since $\phi(f) \notin k f$, we have $\phi_0(f) \notin k_0 f$, so $\phi_0 \notin T(\ggoth_{k_0})$ by \eqref{6c5exs3wdfex1g2h8wd},
so $\Aut_{\SL_2(k_0)}( V_n^{(k_0)} ) \ncong k_0^*$ by \eqref{o8ih2g73trgefd}. Since $k_0$ is isomorphic to a subfield of $\C$, we have shown 
that {\it if $\Kscr(n) \neq \emptyset$ then some subfield of $\C$ belongs to $\Kscr(n)$.}

Next, consider a subfield $k$ of $\C$ such that $k \in \Kscr(n)$.
Let $\bE : \Scho k \to \Scho{\C}$ be the base extension functor from the category of schemes over $k$ to the category of schemes over $\C$.
Clearly, $\bE( V_n^{(k)} ) = V_n^{(\C)}$.
It is a general property of group schemes that $\bE( \SL_2(k) ) = \SL_2(\C)$ and that if $\varphi : V_n^{(k)} \to V_n^{(k)}$ is $\SL_2(k)$-equivariant
then $\bE( \varphi ) : V_n^{(\C)} \to V_n^{(\C)}$ is $\SL_2(\C)$-equivariant.  So
\begin{equation} \label {iucytf6172usnx}
\text{$\varphi \mapsto \bE(\varphi)$ is a group homomorphism from $\Aut_{\SL_2(k)}( V_n^{(k)} )$ to $\Aut_{\SL_2(\C)}( V_n^{(\C)} )$.}
\end{equation}
Let $\Escr$ denote the functor $\C \otimes_k (\underline{\ \ })$ from the category of $k$-algebras to the category of $\C$-algebras.
Then $\Escr( k[V_n^{(k)}] ) = \C[V_n^{(\C)}]$ and 
$\phi \mapsto \Escr(\phi)$ is a group homomorphism from $\Aut_{k}( k[V_n^{(k)}] )$ to $\Aut_{\C}( \C[V_n^{(\C)}] )$.
By \eqref{iucytf6172usnx}, $\phi \mapsto \Escr(\phi)$ preserves $\SL_2$-equivariance, so
$$
\text{$\phi \mapsto \Escr(\phi)$ is a group homomorphism from $\Aut_{\SL_2(k)}( k[V_n^{(k)}] )$ to $\Aut_{\SL_2(\C)}( \C[V_n^{(\C)}] )$.}
$$
Since $k \in \Kscr(n)$, \eqref{o8ih2g73trgefd} implies that we can choose $\phi \in \Aut_{\SL_2(k)}( k[V_n^{(k)}] ) \setminus T(\ggoth_k)$;
by \eqref{6c5exs3wdfex1g2h8wd},  there exists a homogeneous $f \in k[ V_n^{(k)} ]$ such that $\phi(f) \notin k f$.
Consider the element $\tilde\phi = \Escr(\phi)$ of $\Aut_{\SL_2(\C)}( \C[V_n^{(\C)}] )$
and the canonical homomorphism $\nu : k[V_n^{(k)}] \to \C \otimes_k k[V_n^{(k)}] = \C[V_n^{(\C)}]$, $\nu(x) = 1 \otimes x$.
The following diagram commutes:
$$
\scalebox{.85}{\xymatrix{
\C[V_n^{(\C)}] \ar[r]^-{\tilde\phi}  &  \C[V_n^{(\C)}]  \\
k[V_n^{(k)}] \ar[u]^-{\nu} \ar[r]^-{\phi} &  k[V_n^{(k)}] \ar[u]_-{\nu}
}}
$$
Since $f$ and $\phi(f)$ are linearly independent over $k$,
it follows that $\nu(f)$ and $\nu(\phi(f)) = \tilde\phi(\nu(f))$ are linearly independent over $\C$,
so $\tilde\phi( \nu(f) ) \notin \C \cdot \nu(f)$.
By \eqref{6c5exs3wdfex1g2h8wd}, this implies that $\tilde\phi \notin T(\ggoth_\C)$,
so \eqref{o8ih2g73trgefd} implies that  $\Aut_{\SL_2(\C)}( V_n^{(\C)} ) \ncong \C^*$,
i.e., $\C \in \Kscr(n)$.

This proves \eqref{udb998ub239dn28ho}.
It follows that $\Aut_{\SL_2(k)}( V_n^{(k)} ) \cong k^*$ for all $n \le 5$ and $k \in \ACF$, so in particular {\it Corollary\,\ref{main4}} is true. \hfill\qedsymbol
\end{noname}


\section{Remarks on The binary sextic}
Let $\mathcal{Q}$ be the ring of invariants for the irreducible representation of $\SL_2(k)$ of degree 6 (the binary sextic representation). 
Then $\mathcal{Q}$ is a rational affine UFD and $\dim \mathcal{Q}=4$.
As with the binary quintic, ${\rm Spec}(\mathcal{Q})$ is a hypersurface. In \cite{Salmon.1885}, Articles 252 and 260, 
Salmon shows that $\mathcal{Q}$ is generated by homogeneous invariants $A,B,C,\Delta$  (=discriminant) and $E$, where $R:=k[A,B,C,\Delta ]\cong k^{[4]}$ and $E^2\in k[A,B,C,\Delta ]$, 
and $\deg (A,B,C,\Delta ,E)=(2,4,6,10,15)$. Let $R[\lambda ]=R^{[1]}$ and define $S,T\in R[\lambda ]$ by 
\[
S=1024\,\lambda^3-1152\,A\lambda^2+(132\,A^2-10800\,B)\lambda +(3375\,C+2700\,AB-4A^3)
\]
and:
\[
T=\lambda(256\lambda^2-320\,A\lambda+55\,A^2+4500\,B)^2-\Delta
\]
Salmon showed that (up to a scalar multiple) $E^2={\rm Res}_{\lambda}(S,T)$, where ${\rm Res}_{\lambda}(S,T)$ is the resultant of $S$ and $T$.
We calculated the polynomial ${\rm Res}_{\lambda}(S,T)$ using {\it Maple}. 
While the number of terms in this resultant is not too large, the coefficients are very large integers, and this remains true after removing their greatest common divisor. 
We proceed to make some simplifications.

Define:
\[
B'=2^23^25^2B+11A^2 \quad {\rm and}\quad C'=3^35^3C+2^23^35^2AB-2^2A^3
\]
Then 
\[
S=2^{10}\lambda^3+2^73^2A\lambda^2+2^23(B'-2\cdot 11A^2)\lambda +C'
\]
and:
\[
T=\lambda(2^8\lambda^2-2^65A\lambda +5B')^2-\Delta
\]
Replacing $\lambda$ by $2^{-4}\lambda$ we obtain
\[
2^2S=\lambda^3+2\cdot 3^2A\lambda^2+3(B'-2\cdot 11A^2)\lambda+2^2C'
\]
and:
\[
2^4T=\lambda(\lambda^2-2^25A\lambda +5B')^2-2^4\Delta
\]
If we now use $(u,x,y,z)=(2^4\Delta,2^2C',B',A)$ with the grading $\deg (u,x,y,z)=(5,3,2,1)$ we obtain:
\[
S=\lambda^3+(18z)\lambda^2+3(y-22z^2)\lambda+x \quad {\rm and}\quad T=\lambda(\lambda^2-(20z)\lambda+5y)^2-u
\]
In this form, we find that $P:={\rm Res}_{\lambda}(S,T)$ equals:
\begin{multline*}
138468423456uz^{10} - 52450160400xy^2z^8 - 42353126640uyz^8 + 6357595200x^2yz^7 \\
+ 13658752800xy^3z^6 - 1486797840uxz^7 + 3962439360uy^2z^6 - 192654400x^3z^6 - 1020180000x^2y^2z^5 \\
- 880071600xy^4z^4 + 217534440uxyz^5 - 102459600uy^3z^4 
+ 12117240x^3yz^4 + 24493600x^2y^3z^3 \\- 1192800xy^5z^2 
- 15933528u^2z^5 - 3156780ux^2z^4 + 5301960uxy^2z^3 
- 142080uy^4z^2 - 27760x^4z^3 \\- 236040x^3y^2z^2 + 16800x^2y^4z 
- 400xy^6 + 189960u^2yz^3 - 53940ux^2yz^2 + 9240uxy^3z 
- 48uy^5 + 840x^4yz \\ - 40x^3y^3 - 6030u^2xz^2  + 840u^2y^2z 
+ 210ux^3z - 60ux^2y^2 - x^5 - 15u^2xy - u^3
\end{multline*}
Note that $P$ is a homogeneous polynomial of degree 15 with 36 terms, having degree 10 in $z$, where $P\in (u,x)$ and $P(u,x,0,0)=-(x^5+u^3)$.
We have:
\[
\mathcal{Q}\cong k[u,x,y,z,v]/(v^2-P(u,x,y,z))
\]
By {\it Proposition\,\ref{FMJ13}}, $\mathcal{Q}$ is rigid if and only if $|P|_R \ge2$.



\bigskip

\noindent \address{Department of Mathematics and Statistics\\
University of Ottawa\\
Ottawa, ON, Canada, K1N 6N5\\
\email{ddaigle@uottawa.ca}
\bigskip

\noindent \address{Department of Mathematics\\
Western Michigan University\\
Kalamazoo, MI 49008}\\
USA\\
\email{gene.freudenburg@wmich.edu}

\end{document}